\documentclass[a4paper,parskip=half]{scrartcl}
\usepackage[utf8]{inputenc}
\usepackage[english]{babel}
\usepackage{amsmath,amssymb,amsfonts,siunitx}
\usepackage{amsthm}
\usepackage{tikz,url}
\usepackage{geometry}
\usepackage{mathtools}
\mathtoolsset{showonlyrefs}
\usepackage{subcaption}
\usepackage{comment}
\usepackage{algorithm}
\usepackage{algpseudocode}
\usepackage[english,pdfusetitle,colorlinks=true,linkcolor=blue,citecolor=green!50!black,
bookmarks=true]{hyperref}
\usepackage{enumerate}
\usepackage{listings}
\usepackage{nicefrac}

\newcommand{\one}{{\mathbbm{1}}}
\newcommand{\dd}{\mathrm{d}}
\newcommand{\NN}{\mathbb{N}_0}

\theoremstyle{plain}
\newtheorem{lemma}{Lemma}[section]
\newtheorem{theorem}[lemma]{Theorem}
\newtheorem{corollary}[lemma]{Corollary}
\newtheorem{proposition}[lemma]{Proposition}
\newtheorem{remark}[lemma]{Remark}

\newtheorem{example}[lemma]{Example}
\theoremstyle{definition}

\newcommand{\C}{\mathbb{C}}
\newcommand{\R}{\mathbb{R}}
\newcommand{\N}{\mathbb{N}}

\renewcommand{\d}{\mathrm{d}}
\newcommand{\Sp}{\mathbb S}

\newcommand{\e}{\mathrm{e}}
\renewcommand{\i}{\mathrm{i}}
\newcommand{\diff}{\mathop{}\!\mathrm{d}}
\newcommand{\cM}{\mathcal{M}}
\newcommand{\cU}{\mathcal{U}}

\newcommand{\tT}{\mathrm{T}}

\usepackage{dsfont}

\DeclareMathOperator{\E}{\mathbb E}

\newcommand{\inn}[1]{\left\langle#1\right\rangle}
\normalfont

\usepackage{nicefrac} 
\usepackage{xfrac}    
\usepackage{bbm} 
\DeclareMathOperator{\loc}{loc}
\newcommand{\inv}{^{-1}}
\newcommand{\schone}{{\mathcal S(\R)}}
\newcommand{\schd}{{\mathcal S(\R^d)}}
\newcommand{\tempone}{{\mathcal S^\prime(\R)}}
\newcommand{\tempd}{{\mathcal S^\prime (\R^d)}}
\newcommand{\schonerad}{{\mathcal S_\mathrm{rad}(\R)}}
\newcommand{\schdrad}{{\mathcal S_\mathrm{rad}(\R^d)}}
\newcommand{\temponerad}{{\mathcal S_\mathrm{rad}^\prime(\R)}}
\newcommand{\tempdrad}{{\mathcal S_\mathrm{rad}^\prime (\R^d)}}
\newcommand{\Rd}{{\R^d}}

\newcommand{\tempavg}{{\mathcal A}}
\newcommand{\temprot}{{\mathcal R}}

\newcommand{\norm}{\|\cdot \|}
\newcommand{\sphere}{{\mathbb S^{d-1}}}
\newcommand{\forrall}{\quad \text{for all} \quad }
\newcommand{\cc}{{\mathcal{C}}}
\newcommand{\limn}{\lim_{n\to \infty}}

\newcommand{\lloc}{{L^1_{\loc}([0,\infty))}}

\title{Slicing of Radial Functions: \\a Dimension Walk in the Fourier Space
}

\author{Nicolaj Rux\footnotemark[1] \\%
{\footnotesize\href{mailto:rux@tu-berlin.de}{rux@tu-berlin.de}}
\and Michael Quellmalz\thanks{Technische Universität Berlin, Institute of Mathematics, Straße des 17. Juni 136, D-10623 Berlin, Germany, \url{https://tu.berlin/imageanalysis}} \\%
{\footnotesize\href{mailto:quellmalz@math.tu-berlin.de}{quellmalz@math.tu-berlin.de}}
\and Gabriele Steidl\footnotemark[1]\\%
{\footnotesize\href{mailto:steidl@math.tu-berlin.de}{steidl@math.tu-berlin.de}}}

\date{\today}

\begin{document}

\maketitle

\begin{abstract}
Computations in high-dimensional spaces can often be realized only approximately, using a certain number of projections onto lower dimensional subspaces or sampling from distributions. In this paper, we are interested in pairs of real-valued functions $(F,f)$ on $[0,\infty)$ that are related by the projection/slicing formula $F (\| x \|) = \mathbb E_{\xi} \big[ f \big(|\langle x ,\xi \rangle| \big) \big]$ for $x\in\mathbb R^d$, where the expectation value is taken over uniformly distributed directions in $\mathbb R^d$. While it is known that $F$ can be obtained from $f$ by an Abel-like integral formula, we construct conversely $f$ from given $F$ using their Fourier transforms. First, we consider the relation between $F$ and $f$ for radial functions $F(\| \cdot\| )$ that are Fourier transforms of $L^1$ functions. Besides $d$- and one-dimensional Fourier transforms, it relies on a rotation operator, an averaging operator and a multiplication operator to manage the walk from $d$ to one dimension in the Fourier space. Then, we generalize the results to tempered distributions, where we are mainly interested in radial regular tempered distributions. Based on Bochner's theorem, this includes positive definite functions $F(\| \cdot\| )$ and, by the theory of fractional derivatives, also functions $F$ whose derivative of order $\lfloor \nicefrac{d}{2}\rfloor$ is slowly increasing and continuous.

\emph{Keywords:} Radial functions, Fourier transform, slicing, fast summation, random Fourier features
\end{abstract}

\section{Introduction}
Radial functions play an important role in approximation theory \cite{buhmann2003,W2004},
kernel density estimation \cite{P1962,R1956}, 
support vector machines \cite{ste11,SC2008}, 
kernelized principal component analysis \cite{SS2002,SC2004}, simulation of optical scattering \cite{FauKirQueSchSet23,KirQueRitSchSet21}, distance computations between probability measures \cite{GBRSS2006,szekely2002} 
as well as  dithering \cite{EhlGraNeuSte21,GraPotSte12a} in image processing, to mention only a few.
Recently, they have found applications in machine learning in connection with 
Stein variational gradient descent flows \cite{LQ2016}
and Wasserstein gradient flows \cite{AKSG2019,GBG2024,HHABCS2023}.
A central issue in the above applications is the fast evaluation of radial functions, more precisely, the computation of ``convolutions at nonequispaced knots''
\begin{equation} \label{eq:sum}
s_j\coloneqq \sum_{i=1}^N \alpha_i F(\|x_i-x_j\|), \quad j = 1,\ldots,N
\end{equation}
for large $N \in \mathbb N$,
where $x_i\in\R^d$ and $\alpha_j\in\C$.
Throughout this paper, let $\| \cdot\|$ be the Euclidean norm on $\R^d$, where the dimension becomes clear from the context.
Certain methods for high-dimensional data $x_i \in \R^d$, $d \gg 1$, were proposed in the literature.
A popular one, called random Fourier features \cite{RR2007},
was analyzed, e.g., in \cite{HasSchShiTopTraWar23,SutSch15} and found recent applications for ANOVA approximation \cite{PotWei24} and domain decomposition \cite{LiXiaLiuLia23}.

The Random Fourier features (RFF) method assumes that $F(\|\cdot\|)$ is positive definite on~$\R^d$ (see Section~\ref{sec:pos-def} for the definition), so that Bochner's theorem ensures the existence of a positive measure $\mu$ such that $F(\|\cdot\|)$ is the Fourier transform of $\mu$.
Then, 
\begin{equation*}
    F(\|x\|)= \int_\Rd \e^{-2\pi \i \langle x,\nu\rangle}\d\mu(v)\approx \frac{1}{D}\sum_{p=1}^D \e^{-2\pi \i \langle x,v_p\rangle},\quad x\in \Rd,
\end{equation*}
where $v_1,\ldots, v_D$ are iid samples of $\mu$,
and the sum in \eqref{eq:sum} can be approximated for $j=1,\ldots,N$ as
\begin{equation}\label{eq:RFF}
    s_j \approx 
\sum_{n=1}^N \alpha_n \frac{1}{D} \sum_{p=1}^D \e^{2\pi \i \langle x_j - x_n, v_p \rangle} =
\frac{1}{D} \sum_{p=1}^D \e^{2\pi \i \langle x_j, v_p \rangle} 
\sum_{n=1}^N \alpha_n \e^{-2\pi \i \langle x_n, v_p \rangle}.
\end{equation}
The inner sum over $n$ is independent of $j$ and needs $\mathcal O(DN)$ arithmetic operations. Afterwards all $s_j$ can be computed in $\mathcal O(DN)$ too, so that the total arithmetic  complexity of computing \eqref{eq:RFF}  is $\mathcal O(DN)$.
 
Another technique, called ''slicing'' \cite{hertrich2024}, is well-known in the context of optimal transport \cite{Bon23a,BRPP15,QueBeiSte23,QueBueSte24}. It reduces problem \eqref{eq:sum} from $\Rd$ to a several one-dimensional problems. It is based on the existence of a one-dimensional
function $f$ such that the slicing (projection) formula
\begin{equation} \label{eq:slice}
F (\| x \| )
=
\E_{\xi\sim \mathcal U_{\mathbb S^{d-1}}} \left[ f \left(|\langle x  ,\xi \rangle| \right) \right] \quad x\in \Rd,
\end{equation}
holds true, 
where the expectation value is taken over the uniform distribution $\cU_\sphere$ on the unit sphere $\sphere \subset \R^d$. 
In other words, a radial function $F \circ \| \cdot \|$  fulfilling  \eqref{eq:slice}
can be evaluated at $x \in \R^d$ by projecting it
onto lines of different directions $\xi$
through the origin, see Fig.\ \ref{fig:proj},
followed by evaluating a one-dimensional function  $f$
at the projected points $\langle x, \xi \rangle$.

To compute the sum $s$ with slicing, the function $F$ is approximated using $P$ quadrature points $\xi_1,\ldots, \xi_P\in{\sphere}$ via
\begin{equation} \label{eq:slicing-P}
    F(\|x\|)=\E_{\xi\sim \mathcal U_{\mathbb S^{d-1}}} \left[ f \left(|\langle x  ,\xi \rangle| \right) \right] \approx \frac{1}{P}\sum_{p=1}^P f(|\langle x,\xi_p\rangle|).
\end{equation}
Inserting this into the sum \eqref{eq:sum} yields for $j=1,\ldots,N$
the approximation
\begin{equation} \label{eq:sum-1d}
s_j
\approx
\frac{1}{P}\sum_{p=1}^P \sum_{i=1}^N \alpha_i f(|\langle x_i-x_j,\xi_p\rangle|). 
\end{equation}
For certain functions $f$, one-dimensional summations of the form \eqref{eq:sum-1d} can be
done in a very fast way, e.g.,  
via sorting or fast Fourier transforms at nonequispaced knots \cite{hertrich2024,KunPot08,PSN2004}, as done in various applications \cite{BaQue22,ChaCiuKahWei17,HofNesPip16,KirPot19}.
Extensive numerical comparisons between the slicing summation and various RFF-based approaches can be found in \cite{HerJahQue24}.
Note that \eqref{eq:sum-1d} can be viewed as an approximation of $F(\|\cdot\|)$ by the ridge functions $f(|\langle\cdot,\xi\rangle|)$, cf.\ \cite{Pin15,Uns23}, but we require it to be exact in expectation.

The relation between $F$ and its sliced version $f$ in \eqref{eq:slice}
is given by the Abel-type integral
\begin{equation} \label{eq:basis_function_intro}
F(s) 
= c_d \int_0^1 f(ts)(1-t^2)^{\frac{d-3}{2}}\,\d t
\end{equation}
with some constant $c_d$.
Note that in \cite{hertrich2024}, functions $F$ having a power series were considered to determine their slicing functions $f$.
In contrast to RFF, slicing is  not restricted to positive definite functions $F \circ \| \cdot \|$, and indeed it works also for other functions, which are of interest in applications, like
Riesz kernels ${\| \cdot\|^r}$, $r \in (0,2)$ or thin
plate splines ${\| \cdot\|^2} {\log \| \cdot \|}$.
However, we see from its integral representation \eqref{eq:basis_function_intro} 
that $F\colon[0,\infty) \to \R$ must have some smoothness properties. 
Indeed, \eqref{eq:basis_function_intro} is closely related to Riemann--Liouville fractional integrals,
and the injectivity of the transform \eqref{eq:basis_function_intro} if $f\in L^1(\R)$ as well as the inverse transform, which determines $f$ from $F$,
can be deduced via fractional derivatives, see Appendix \ref{app:sturm_liouv}. 
However, the resulting integrals are often hard to evaluate, and we will follow another approach. 
\begin{figure}
\begin{center}
\begin{tikzpicture}

\def\angle{145}
\def\length{5}
\draw[thick,->] (0,0) -- (\angle:\length) node[above left] {$\xi$};


\def\shrink{1}
\def\offsetx{7.5}
\def\offsety{2.5}
\draw[thick,->] ({\offsetx},{\offsety}) -- ({\offsetx-\shrink * \length}, {\offsety}) node[left] {$\xi$} ;

\newcommand{\ProjectPoint}[3]{

\filldraw[black] (#1:#2) circle (1.5pt) node[right] {$#3$};

\draw[dashed,<-,thick] (\angle: {cos(\angle-#1) * #2}) -- (#1:#2);

\ifnum #1 < \angle {
    \filldraw[black] ({\offsetx-cos(\angle-#1) * #2}, {\offsety}) circle (1.5pt) node[above] {\tiny $\langle \xi,#3\rangle $};
}
\else { 
    \filldraw[black] ({\offsetx-cos(\angle-#1) * #2}, {\offsety}) circle (1.5pt) node[below] {\tiny $\langle \xi,#3\rangle $};
}
\fi
}

\ProjectPoint{120}{5}{x_1}
\ProjectPoint{160}{4}{x_2}
\ProjectPoint{130}{3}{x_3}
\ProjectPoint{170}{2}{x_4}
\ProjectPoint{100}{2}{x_5}

\end{tikzpicture}
\end{center}
\caption{Projection of points $x_1,\dots,x_5\in\R^2$ onto the line in direction $\xi$.}
\label{fig:proj}
\end{figure}

In this paper, we are interested in the relation between $F$ and $f$ from a Fourier analytic point of view. More precisely, we show how $f$ can be obtained from
a radial function $F\circ\| \cdot \|$ that is the
Fourier transform of a radial $L^1$ function.
Then we will see that this is a special case
of a recovery formula for radial regular tempered distributions.
Since measures can be considered as tempered distributions, the latter one also includes positive definite functions
appearing in Bochner's relation. 
Radial tempered distributions were already considered in the literature, e.g., in \cite{Est14,GraTes12}.
However, to the best of our knowledge, our rigorous proofs 
of certain properties needed for our approach are novel.
The dimension reduction from a multivariate, radial function
$F\circ \| \cdot\|$ to a univariate one $f\circ| \cdot|$ 
in the Fourier space can be easily realized by applying a multiplication operator arising from a variable transform,
and is actually what we call ''dimension walk'', a notation borrowed from Wendland \cite[Chap. 9.2]{W2004}. 
We are completely aware that also projections onto larger than one-dimensional subspaces may be of interest, but are
out of the scope of this paper.

Outline of the paper: in Section~\ref{sec:back}
we introduce our two main players, namely the rotation operator and its inverse, the averaging operator. Then we recall the relation between the slicing formula
\eqref{eq:slice} and the Abel-like integral
\eqref{eq:basis_function_intro}.
The ''dimension walk'' is realized by a multiplication operator.
Moreover, we determine the smoothness of functions $F$ determined by the Abel-like integral.
Then, in Section \ref{sec:sliceL1}, we show as a starting point,
how the function 
$f$ in \eqref{eq:basis_function_intro} 
can be computed from a radial function $F\circ \| \cdot \|$
that is the Fourier transform of a radial $L^1$ function.
As a by-product of the smoothness result for the Abel-like integral, we will see that
the Fourier transform of a radial function in $\R^d$ is
$\lfloor \nicefrac{(d-2)}{2}\rfloor$ times continuously differentiable,
a result that should be known in the literature, although we
did not find a direct reference.
In Section~\ref{sec:walk_c}, we first recall the definition of radial Schwartz functions and prove that the averaging and  rotation operator are continuous operators on these spaces. 
This allows to generalize the reconstruction to radial regular tempered distributions.
Clearly, this is more general than the approach in the previous section 
and we provide in particular two examples.
Since measures can be treated as special tempered distributions, 
we obtain an inversion formula that is valid for all positive definite radial functions $F\circ \| \cdot \|$
based on Bochner's theorem.
Conclusions are drawn in Section~\ref{sec:conclusions}.
Auxiliary technical results are postponed to the appendix.

\section{Rotating, Averaging and Slicing} \label{sec:back}
We denote by $\mathcal C(\R^d)$ the space of complex-valued continuous functions, by
$\mathcal C_b(\R^d)$ the Banach space of bounded continuous functions, 
and by 
$\mathcal C_0(\R^d)$ 
the Banach space of continuous functions vanishing for $\|x\| \to \infty$ with the norm
\begin{equation} \label{eq:infty-norm}
\|\Phi\|_\infty \coloneqq \sup_{x \in \R^d} |\Phi (x)|.
\end{equation}
Let $\mathcal C^\infty(\R^d)$ be the space of infinitely differentiable functions.
Further, let 
$L^p_\mathrm{loc}(\R^d)$, $p\in[1,\infty)$, 
denote the space of locally $p$-integrable functions and $L^\infty_\mathrm{loc}(\R^d)$ the space of locally bounded functions.

We are interested in radial functions $\Phi\colon\R^d \to \R$, which are characterized by the property that for all $x \in \R^d$,
\begin{equation}
 \Phi(x) = \Phi(Q x)   \quad \text{for all}  \quad  Q \in \text{O}(d) ,
\end{equation}
where  $\mathrm{O}(d)$ denotes the set of orthogonal $d\times d$ matrices.
We need two operators. The \emph{rotation operator} $\mathcal R_d$ associates to  $F\colon [0,\infty) \to \R$ the radial function $\mathcal R_d F\colon \R^d \to \R$ given by
\begin{equation} \label{op:rot}
\mathcal R_d F\coloneqq F\circ \|\cdot \|.
\end{equation}
Since every function $F\colon[0,\infty)\to\R$ can be identified with its \emph{even} continuation $F:\R\to\R$, we define $\mathcal R
_d$ alternatively for all even functions on $\R$.
Every radial function is of the form \eqref{op:rot}. 
The \emph{spherical averaging operator} $\mathcal A_d$ assigns to a function $\Phi\colon \R^d \to \R$, which is integrable on every sphere $r\sphere$, $r>0$, the
function
$\mathcal A_d\Phi\colon \R \to \R$ defined by
\begin{equation} \label{op:ave}
\mathcal A_d \Phi (r)\coloneqq  
\frac{1}{\omega_{d-1}} \int_\sphere \Phi(r \, \xi) \, \d\xi = \E_{\xi\sim \mathcal U_{\mathbb S^{d-1}}} \left[ \Phi(r \, \xi) \right]\forrall r\in\R.
\end{equation}
Note that as soon as $\Phi$ is continuous on $\Rd\setminus \{0\}$ or $\Phi$ is radial, the function $\mathcal A_d \Phi$ is well-defined. By definition $\mathcal A_d \Phi$ is an even function, i.e.,
$\mathcal A_d \Phi(r) = \mathcal A_d \Phi(-r)$, $r \in \R$
and we have
for $d=1$ that 
$$ \mathcal A_1\Phi(r)=\tfrac12 \left(\Phi(r)+\Phi(-r) \right) \forrall r \in \R.$$
Moreover, we obtain by definition that
$$
\mathcal A_d (\Phi \circ Q) = \mathcal A_d \Phi \quad \text{for all} \quad Q \in 
\text{O} (d). 
$$
The operator $\mathcal A_d$ is the inverse of $\mathcal R_d$, meaning that for every even function $F\colon\R\to\R$ and $r\geqslant 0$ it holds
 \begin{equation} \label{eq:inv_ar}
        (\mathcal A_d \circ \mathcal R_d) F(r)
        =\E_{\xi\sim\cU_\sphere} \left[F(\|r\xi\|) \right]
        =F(|r|)
        =F(r),
 \end{equation}
 and conversely for every radial function $\Phi\colon\R^d\to\R$ and $x\in \Rd$ we have
 \begin{equation} \label{eq:inv_ra}
 (\mathcal R_d \circ \mathcal A_d) \Phi(x)
 =  (\mathcal A_d \Phi)(\|x\|) 
        =\E_{\xi\sim\cU_\sphere} \left[\Phi(\|x\| \xi) \right]
        =\Phi(x).
  \end{equation}
The following theorem considers special radial functions of the form \eqref{eq:slice}.

\begin{theorem} \label{thm:slicing_b}
Let $d \in \mathbb N$, $d \ge 2$ and let $f \in L^1_\mathrm{loc}([0,\infty))$. 
Then the function $F\colon [0,\infty) \to \mathbb R$ 
fulfilling the slicing relation
\begin{equation} \label{slicing_def}
\mathcal R_d F
=
\E_{\xi\sim \mathcal U_{\mathbb S^{d-1}}} \left[ f \left(|\langle \cdot ,\xi \rangle| \right) \right] 
\end{equation} 
is determined by the Abel-type integral
\begin{equation} \label{eq:basis_function}
F(s) 
= c_d \int_0^1 f(ts)(1-t^2)^{\frac{d-3}{2}}\,\d t
=
c_d \, \frac{1}{s} \, \int_0^s f(t)\Big(1-\frac{t^2}{s^2}\Big)^{\frac{d-3}{2}}\,\d t,
\end{equation}
where $c_d \coloneqq \frac{2 \omega_{d-2}}{\omega_{d-1}}$ and $\omega_{d-1}= \frac{2\pi^{\nicefrac{d}{2}}}{\Gamma(\nicefrac{d}{2})}$ denotes the surface measure of $\mathbb S^{d-1}$.
\end{theorem}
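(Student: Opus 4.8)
The plan is to evaluate the right-hand side of \eqref{slicing_def} directly, by identifying the distribution of $\langle x,\xi\rangle$ when $\xi\sim\mathcal U_\sphere$. First note that, since $\mathcal U_\sphere$ is invariant under $\mathrm O(d)$ and $\langle Qx,\xi\rangle=\langle x,Q^\top\xi\rangle$ with $Q^\top\xi$ again uniform on $\sphere$, the map $x\mapsto\E_{\xi\sim\mathcal U_\sphere}\big[f(|\langle x,\xi\rangle|)\big]$ is radial; hence there is a unique $F\colon[0,\infty)\to\R$ with $\mathcal R_dF$ equal to it, obtained by evaluating at $x=s\,e_1$, where $e_1$ is the first standard basis vector. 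It thus remains to prove, for $s\ge 0$,
\begin{equation*}
F(s)=\E_{\xi\sim\mathcal U_\sphere}\big[f(s\,|\xi_1|)\big]=c_d\int_0^1 f(ts)(1-t^2)^{\frac{d-3}{2}}\,\d t .
\end{equation*}

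The key step is the law of the first coordinate $\xi_1$ of $\xi\sim\mathcal U_\sphere$. I would obtain it from the standard decomposition of the surface measure on $\sphere$ in the coordinates $(t,\eta)\in(-1,1)\times\mathbb S^{d-2}$ given by $\xi=(t,\sqrt{1-t^2}\,\eta)$, namely
\begin{equation*}
\int_\sphere\Phi(\xi)\,\d\xi=\int_{-1}^1(1-t^2)^{\frac{d-3}{2}}\int_{\mathbb S^{d-2}}\Phi\big(t,\sqrt{1-t^2}\,\eta\big)\,\d\eta\,\d t ,
\end{equation*}
which follows from a direct Jacobian computation for the parametrization $(t,\eta)\mapsto(t,\sqrt{1-t^2}\,\eta)$ (equivalently, from the co-area formula applied to the height function $\xi\mapsto\xi_1$, whose level sets are $(d-2)$-spheres of radius $\sqrt{1-t^2}$ and whose gradient on $\sphere$ has length $\sqrt{1-t^2}$); for $d=2$ the inner integral degenerates to the sum over $\eta\in\mathbb S^0=\{\pm1\}$. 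Taking $\Phi$ to depend only on $\xi_1$ shows that $\xi_1$ has density $t\mapsto\frac{\omega_{d-2}}{\omega_{d-1}}(1-t^2)^{(d-3)/2}$ on $(-1,1)$; the Beta integral $\int_{-1}^1(1-t^2)^{(d-3)/2}\,\d t=B\big(\tfrac12,\tfrac{d-1}{2}\big)=\omega_{d-1}/\omega_{d-2}$ confirms that this integrates to $1$ (and, incidentally, that $F\equiv1$ when $f\equiv1$).

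Substituting this density into the expectation and using that $t\mapsto f(s|t|)(1-t^2)^{(d-3)/2}$ is even gives
\begin{equation*}
F(s)=\frac{\omega_{d-2}}{\omega_{d-1}}\int_{-1}^1 f(s|t|)(1-t^2)^{\frac{d-3}{2}}\,\d t=\frac{2\omega_{d-2}}{\omega_{d-1}}\int_0^1 f(st)(1-t^2)^{\frac{d-3}{2}}\,\d t ,
\end{equation*}
which is the first claimed formula with $c_d=2\omega_{d-2}/\omega_{d-1}$; the substitution $u=ts$, $\d u=s\,\d t$, then turns it into the second expression $c_d\,\frac1s\int_0^s f(u)\big(1-\tfrac{u^2}{s^2}\big)^{(d-3)/2}\,\d u$.

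The only genuine technical points are the justification of the surface-measure decomposition together with the use of Tonelli's theorem, and the well-definedness of the integrals: for $d\ge3$ the weight $(1-t^2)^{(d-3)/2}$ is bounded, so finiteness is immediate from $f\in L^1_{\mathrm{loc}}([0,\infty))$, whereas for $d=2$ finiteness of $\int_0^1 f(ts)(1-t^2)^{-1/2}\,\d t$ is precisely the requirement that $\E_{\xi\sim\mathcal U_\sphere}[f(s|\xi_1|)]$ be finite, which the validity of the slicing relation \eqref{slicing_def} presupposes. The degenerate case $d=2$ should be recorded explicitly, but uses no new idea; I expect the surface-measure decomposition to be the main — though still routine — obstacle.
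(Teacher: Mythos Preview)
Your proposal is correct and follows essentially the same route as the paper's proof: reduce to $x=s\,e_1$ by the $\mathrm O(d)$-invariance of $\mathcal U_\sphere$, then apply the standard decomposition of the surface measure on $\sphere$ along the first coordinate to obtain the density $(1-t^2)^{(d-3)/2}$ for $\xi_1$, and finally use evenness to restrict to $[0,1]$. The paper cites this decomposition from the literature and treats $d=2$ separately via the parametrization $\xi=(\cos\theta,\sin\theta)$, whereas you absorb $d=2$ into the same framework via $\mathbb S^0=\{\pm1\}$; both are fine.
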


In the context of slicing, the last theorem was recently proved in \cite{hertrich2024}. However, as outlined in the next Remark~\ref{rem:Radon}, the slicing relation \eqref{slicing_def} equals the adjoint Radon transform applied to a radial function. In this context, Theorem~\ref{thm:slicing_b} constitutes a special case of \cite[Lem.~2.1]{Rub04}.
For convenience, we add the short proof of Theorem \ref{thm:slicing_b} in Appendix \ref{app:1}.

\begin{remark}[Connection with the Radon transform] \label{rem:Radon}
    The Radon transform assigns to a function $\Phi\colon\R^d\to\R$ its integrals along all hyperplanes and is defined by
    $$
    R\Phi(\xi,t)
    \coloneqq
    \int_{\xi^\perp} \Phi(x+t\xi) \d x, \qquad \xi\in\mathbb S^{d-1},\ t\in\R,
    $$
    where $\d x$ is the integration along the hyperplane $\xi^\perp\coloneqq\{x\in\R^d: \langle x,\xi\rangle=0\}$.
    The dual or adjoint Radon transform $R^*$ is given for $\Psi\colon\mathbb S^{d-1}\times\R\to\R$ by
    $$
    R^*\Psi(x)
    \coloneqq
    \int_{\mathbb S^{d-1}} \Psi(\xi,\langle x,\xi\rangle) \d\xi, \qquad x\in\R^d,
    $$
    see \cite{NatWue01}. If $\Psi$ is radial, i.e., independent of its first argument, we can write it as $\Psi(\xi,t)=f(t)$ for some function $f\colon\R\to\R$. Hence, the dual Radon transform of such radial function is exaclty the slicing relation \eqref{eq:slice} up to the constant $\omega_{d-1}$.
    Note that also formulas for the Radon transform of radial functions are well-known and of similar the structure as \eqref{eq:basis_function}, but not identical, see \cite[proof of Thm 2.6]{Hel10}.
    An inversion formula for the adjoint Radon transform based on the Riesz potential was shown in \cite{Sol87} and \cite[Thm.~8]{Uns23}.
\end{remark}

The following theorem, whose proof is given in Appendix \ref{app:walk_a}, clarifies smoothness properties of the function $F$ in the Abel-type integral.

\begin{theorem} \label{thm:slicing}
For $d \in \mathbb N$ with $d \ge 3$, 
let $f \in L^1_\mathrm{loc}([0,\infty))$ for odd $d$ and $f \in L^p_\mathrm{loc}([0,\infty))$ with $p>2$ for even $d$.
Then the function $F$ defined by \eqref{eq:basis_function}
is $\lfloor \nicefrac{(d-2)}{2} \rfloor$-times continuously differentiable on $(0,\infty)$.
Moreover, if $d$ is odd, then the $\lfloor \nicefrac{(d-2)}{2}  \rfloor$-th derivative of $F$ is absolutely continuous. This result is optimal in the sense that there exists a function $f\in L^\infty_\textup{loc}([0,\infty))$ such that $F$ is not $\lfloor \nicefrac{d}{2}\rfloor$ times differentiable.
\end{theorem}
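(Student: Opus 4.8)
The plan is to peel off a smooth nonvanishing prefactor and reduce everything to a one–parameter family of fractional–integral–type operators. Substituting $u=ts$ in \eqref{eq:basis_function} gives $F(s)=c_d\,s^{-(d-2)}G(s)$ with $G(s)\coloneqq\int_0^s f(u)(s^2-u^2)^{\frac{d-3}{2}}\,\d u$; since $s\mapsto s^{-(d-2)}$ is $C^\infty$ and nowhere zero on $(0,\infty)$, $F$ and $G$ have the same local differentiability class and the same pointwise differentiability order at each $s_0>0$, so it suffices to treat $G$. I would introduce $H_\beta(s)\coloneqq\int_0^s f(u)(s^2-u^2)^{\beta}\,\d u$ for $\beta\ge-\tfrac12$, so that $G=H_{(d-3)/2}$. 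A Hölder estimate near the diagonal $u=s$ shows $H_\beta$ is finite for $f\in L^1_{\mathrm{loc}}$ when $\beta\ge 0$ and for $f\in L^p_{\mathrm{loc}}$ with $p>2$ when $\beta=-\tfrac12$ (then one needs $(s-u)^{-p'/2}$ integrable, i.e.\ $p'<2$, where $p'$ is the conjugate exponent) — which is exactly the dichotomy in the hypotheses, since $\tfrac{d-3}{2}\in\NN$ for odd $d$ and $\tfrac{d-3}{2}=\lfloor\tfrac{d-2}{2}\rfloor-\tfrac12$ for even $d$.

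The engine is the recursion: if $\beta>\tfrac1p$ and $H_{\beta-1}$ is continuous, then $H_\beta\in C^1$ with $H_\beta'(s)=2\beta s\,H_{\beta-1}(s)$. I would prove this in integrated form, $H_\beta(s)-H_\beta(s_0)=\int_{s_0}^{s}2\beta\sigma\,H_{\beta-1}(\sigma)\,\d\sigma$, by interchanging the order of integration: the inner $\sigma$–antiderivative of $2\beta\sigma(\sigma^2-u^2)^{\beta-1}$ is $(\sigma^2-u^2)^{\beta}$ and the boundary contributions telescope, while the $u$–integral of $|f(u)|(\sigma^2-u^2)^{\beta-1}$ is uniformly bounded on compacts precisely because $(\beta-1)p'>-1\iff\beta>\tfrac1p$, which legitimises Tonelli/Fubini. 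All exponents used in the iteration are $\ge\tfrac12>\tfrac1p$. For the base cases, $H_0(s)=\int_0^s f$ is locally absolutely continuous, and after $u=sv$ one has $H_{-1/2}(s)=\int_0^1 f(sv)(1-v^2)^{-1/2}\,\d v$, which is continuous in $s$ because $s\mapsto f(s\,\cdot)$ is continuous into $L^p_{\mathrm{loc}}$ while $(1-v^2)^{-1/2}\in L^{p'}(0,1)$ for $p'<2$. Iterating: for odd $d=2m+1$ start at $H_0$ and climb to $G=H_{m-1}\in C^{m-1}$, and differentiating $H_\ell'=2\ell\,s\,H_{\ell-1}$ a further $\ell-1$ times (Leibniz, only two nonzero terms) together with the induction hypothesis shows $H_\ell^{(\ell)}$ is locally absolutely continuous, which passes to $F^{(m-1)}$; for even $d=2m$ start at $H_{-1/2}$ and climb to $G=H_{m-3/2}\in C^{m-1}$. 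Since $m-1=\lfloor\tfrac{d-2}{2}\rfloor$, this is the claimed regularity.

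For optimality I would take $f=\1_{[1,\infty)}\in L^\infty_{\mathrm{loc}}([0,\infty))$, so that $G(s)=\int_1^{s}(s^2-u^2)^{\frac{d-3}{2}}\,\d u$ for $s>1$ and $G\equiv 0$ on $[0,1]$. Running the same recursion — now with a \emph{fixed} lower limit, so differentiation produces no boundary term there — gives, for $s$ near $1$, $G^{(\ell)}(s)=c\,s^{\ell}H_{\frac{d-3}{2}-\ell}(s)+r(s)$ with $\ell=\lfloor\tfrac{d-2}{2}\rfloor$, a constant $c\neq 0$, and $r\in C^1$, where $\tfrac{d-3}{2}-\ell$ is $0$ for odd $d$ and $-\tfrac12$ for even $d$. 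Now $H_0(s)=(s-1)_+$ is not differentiable at $s=1$, and $H_{-1/2}(s)=\tfrac\pi2-\arcsin(1/s)$ for $s>1$ (and $0$ for $s\le 1$) is continuous at $1$ but has infinite right derivative there. In either case $G^{(\ell)}$, hence $F^{(\ell)}=c_d\,s^{-(d-2)}G^{(\ell)}+(\text{lower, }C^1)$, fails to be differentiable at $s=1$, so $F$ is not $\ell+1=\lfloor\tfrac d2\rfloor$ times differentiable.

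I expect the main obstacle to be the even–dimensional case: establishing continuity of the singular base integral $H_{-1/2}$ and, above all, justifying the recursion $H_\beta'=2\beta s\,H_{\beta-1}$ rigorously for the half–integer kernel, where the Fubini interchange must survive the factor $(s^2-u^2)^{\beta-1}$ that is singular along $u=s$. This is exactly the point where the hypothesis $p>2$ is forced. Once the recursion is in place, the odd case, the transfer from $G$ to $F$, and the sharpness examples should be routine bookkeeping.
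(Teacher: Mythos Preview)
Your approach is correct and follows the same high-level strategy as the paper --- iterate a recursion that lowers the kernel exponent by one --- but your implementation is cleaner in two places. First, you work with $H_\beta(s)=\int_0^s f(u)(s^2-u^2)^\beta\,\d u$ and obtain the tidy recursion $H_\beta'=2\beta s\,H_{\beta-1}$ with the \emph{same} $f$ throughout; the paper instead uses $I_\nu f(s)=\int_0^s f(t)(1-t^2/s^2)^\nu\,\d t$, whose derivative is $\tfrac{2\nu}{s^3}I_{\nu-1}[f(\cdot)(\cdot)^2]$, so the integrand changes at every step. Second, you justify the recursion by the integrated identity plus Fubini, whereas the paper computes left and right difference quotients directly and needs a separate technical inequality (their Lemma~\ref{lemma:tech}) to manufacture a dominating function for the half-integer case; your route bypasses that lemma entirely. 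For optimality you take $f=\1_{[1,\infty)}$ and run the same recursion down to $(s-1)_+$ or $\tfrac\pi2-\arcsin(1/s)$, while the paper chooses $f(t)=-2t\,\1_{(1,\infty)}$, integrates in closed form, and then invokes Cauchy's integral formula for the odd case; both arguments are valid, yours is more self-contained.

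One small sloppiness: the sentence ``All exponents used in the iteration are $\ge\tfrac12>\tfrac1p$'' is only correct in the even case. For odd $d$ with $f\in L^1_{\mathrm{loc}}$ you have $1/p=1$, and the exponents actually used are $\beta=1,2,\dots$; the Fubini step at $\beta=1$ is still fine because the kernel $(\sigma^2-u^2)^{0}=1$ is bounded, but the stated inequality does not cover it. State the odd case separately (or replace the strict inequality by $\beta\ge 1$ when $p=1$).
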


In \eqref{eq:basis_function}, two integral formulas for \( F \) are provided. The first formula can be used to show that \( F \in \mathcal{C}^n((0, \infty)) \) if \( f \in \mathcal{C}^n((0, \infty)) \), by applying the Leibniz rule for differentiation under the integral sign.
The second formula is useful when \( f \) is not differentiable. In this case, the smoothness of \( F \) is determined by the term \( (1 - \nicefrac{t^2}{s^2})^{\frac{d-3}{2}} \). The smoothness of \( F \) is then determined by the number of integrable derivatives of \( (1 - \nicefrac{t^2}{s^2})^{\frac{d-3}{2}} \). Since the exponent \( \nicefrac{(d-3)}{2} \) causes \( (1 - \nicefrac{t^2}{s^2})^{\frac{d-3}{2}} \) to exhibit a different number of integrable derivatives depending on whether \( d \) is even or odd, we make this distinction in Theorem~\ref{thm:slicing}.
The case \( d = 2 \) is special, because here \( \nicefrac{(d-3)}{2} = -\nicefrac{1}{2} \) is negative and \( (1 - \nicefrac{t^2}{s^2})^{-1/2} \) has a pole at \( s \). 

Using the theory of fractional integrals and derivatives it is almost possible to reverse Theorem \ref{thm:slicing}. However we need to assume $\lfloor \nicefrac{d}{2}\rfloor$ instead of $\lfloor\nicefrac{(d-2)}{2}\rfloor$ continuous derivatives of $F$ in order to guarantee that $F$ can be sliced. The statement and proof of this claim can be found in Theorem \ref{thm:enusre_slicing}
in the appendix.

In the rest of this paper, we are interested in characterizing $f$ from given $F$, i.e., the inversion of the Abel-like integral transform \eqref{eq:basis_function},
where we want to use Fourier analytic tools.

\section[Slicing of integrable functions]{Slicing of $L^1$ Functions} \label{sec:sliceL1}
We start with functions $F$ that are
Fourier transforms of absolutely integrable functions.
Let $L^p(\R^d)$, $p \in [1,\infty)$, be the Banach space of
$p$-integrable functions. 
The \emph{Fourier transform} $\mathcal F_d\colon L^1(\mathbb R^d) \to \mathcal C_0(\mathbb R^d)$ is an injective, linear operator
defined for $\Phi \in L^1(\mathbb R^d)$ by 
\begin{equation} \label{fourier_1}
\hat \Phi = \mathcal F_d [\Phi] \coloneqq \int_{\R^d} \e^{-2\pi \i \langle x,\cdot \rangle} \Phi(x) \, \d x.
\end{equation}
If $\hat \Phi \in L^1(\mathbb R^d)$, then the inverse Fourier transform reads as
\begin{equation} \label{fourier_2}
\Phi = \mathcal F_d^{-1} [\hat \Phi] \coloneqq \int_{\R^d} \e^{2\pi \i \langle \cdot,v \rangle} \hat \Phi(v) \, \d v.
\end{equation}
On even functions, and in particular radial functions, the Fourier transform coincides with its inverse.
Moreover, the Fourier transform of a real-valued, radial function is real-valued again, and we have 
$\widehat {\Phi \circ Q} =  \hat \Phi \circ Q$ for all  $Q \in \text{O}(d).
$
For $\rho\colon \R\to \R$, we define
the \emph{multiplication operator} by
\begin{equation} \label{op:mult_m}
\mathcal M_d \rho(r) \coloneqq \rho(|r|) |r|^{d-1} 
\forrall r\in \R.
\end{equation}
By definition, $\mathcal M_d \rho$ is an even function.
We obtain the  following novel inversion result.

\begin{proposition} \label{thm:walk_1}
Let $d \ge 3$. Assume that  $F\colon [0,\infty) \to \R$ fulfills 
$\mathcal{R}_d F = \mathcal F_d [\mathcal R_d \rho]$ 
for some function
$\rho\colon[0,\infty) \to \R$ 
with 
$\mathcal{R}_d \rho \in L^1 (\R^d)$.
Then the function $f\colon[0,\infty) \to \R$  given by the corresponding even function
\begin{equation} \label{eq:walk_a}
f = \tfrac{\omega_{d-1}}{2} ( \mathcal F_1 \circ\mathcal M_d)[\rho] \in \mathcal C_{0}(\R),
\end{equation}
fulfills \eqref{slicing_def},
where $\rho$ is also considered evenly extended here.
If in addition $\mathcal{R}_d F \in L^1(\R^d)$, then
\begin{equation} \label{eq:walk_aa}
f = \tfrac{\omega_{d-1}}{2} (\mathcal F_1\circ \mathcal M_d \circ \mathcal A_d \circ \mathcal F_d^{-1} \circ \mathcal R_d ) [F] .
\end{equation}
\end{proposition}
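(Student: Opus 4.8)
The plan is to verify the slicing relation \eqref{slicing_def} for the proposed $f$ directly — by Fubini's theorem and a change to polar coordinates — and then to read off \eqref{eq:walk_aa} from Fourier inversion on $L^1(\Rd)$; I do not expect to need the Abel integral of Theorem~\ref{thm:slicing_b}. As a preliminary I would check that $f$ is genuinely an element of $\mathcal C_0(\R)$: writing $\|\mathcal R_d\rho\|_{L^1(\Rd)}$ in polar coordinates gives $\omega_{d-1}\int_0^\infty|\rho(r)|\,r^{d-1}\,\d r<\infty$, so the even function $\mathcal M_d\rho$ of \eqref{op:mult_m} lies in $L^1(\R)$, whence $f=\tfrac{\omega_{d-1}}{2}\mathcal F_1[\mathcal M_d\rho]$ is a well-defined, real-valued, even function in $\mathcal C_0(\R)$ by Riemann--Lebesgue. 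In particular both sides of \eqref{slicing_def} are continuous in $x$ (the right-hand side since $f$ is bounded continuous and $\sphere$ is compact; the left-hand side since $\mathcal R_d F=\mathcal F_d[\mathcal R_d\rho]\in\mathcal C_0(\Rd)$), so it suffices to check \eqref{slicing_def} pointwise.

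For the core step I would fix $x\in\Rd$ and insert the definition of $f$: using evenness of $f$, $\E_{\xi\sim\cU_\sphere}\big[f(|\langle x,\xi\rangle|)\big]=\tfrac{1}{\omega_{d-1}}\int_\sphere f(\langle x,\xi\rangle)\,\d\xi=\tfrac12\int_\sphere\int_\R \e^{-2\pi\i u\langle x,\xi\rangle}\rho(|u|)\,|u|^{d-1}\,\d u\,\d\xi$. The double integral converges absolutely — its integrand is dominated by $|\rho(|u|)|\,|u|^{d-1}$, whose integral over $\sphere\times\R$ equals $2\|\mathcal R_d\rho\|_{L^1(\Rd)}$ — so Fubini applies; and since $u\mapsto\int_\sphere\e^{-2\pi\i u\langle x,\xi\rangle}\,\d\xi$ is even in $u$ (substitute $\xi\mapsto-\xi$) and $u\mapsto\rho(|u|)|u|^{d-1}$ is even, folding the $u$-integral onto $(0,\infty)$ and returning to polar coordinates $v=u\xi$ on $\Rd$ gives $\int_0^\infty\rho(u)\,u^{d-1}\int_\sphere\e^{-2\pi\i u\langle x,\xi\rangle}\,\d\xi\,\d u=\int_{\Rd}\e^{-2\pi\i\langle x,v\rangle}\rho(\|v\|)\,\d v=\mathcal F_d[\mathcal R_d\rho](x)=\mathcal R_d F(x)$, which is exactly \eqref{slicing_def}. (As an alternative one could instead invoke Theorem~\ref{thm:slicing_b} and match the Abel integral of $f$ against $F$ via the Poisson integral representation of the Bessel function $J_{(d-2)/2}$ and the classical Fourier transform of the surface measure of $\sphere$; this would also recover the constant $c_d=2\omega_{d-2}/\omega_{d-1}$, but it pulls in Bessel functions that the Fubini argument avoids.)

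For \eqref{eq:walk_aa} I would add the hypothesis $\mathcal R_d F\in L^1(\Rd)$. Then $\mathcal R_d F=\mathcal F_d[\mathcal R_d\rho]$ with both $\mathcal R_d\rho$ and its Fourier transform in $L^1(\Rd)$, so Fourier inversion gives $\mathcal F_d^{-1}[\mathcal R_d F]=\mathcal R_d\rho$ almost everywhere on $\Rd$. Applying $\mathcal A_d$ — well defined because $\mathcal F_d^{-1}[\mathcal R_d F]$ is continuous and radial — and using that a.e.-equal functions have a.e.-equal spherical means together with $\mathcal A_d\circ\mathcal R_d=\id$ from \eqref{eq:inv_ar} yields $(\mathcal A_d\circ\mathcal F_d^{-1}\circ\mathcal R_d)[F]=\rho$ a.e.\ on $\R$. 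Since $\mathcal M_d\rho\in L^1(\R)$ by the preliminary step, applying $\mathcal F_1$ collapses this a.e.-identity into an identity of continuous functions, and multiplying by $\tfrac{\omega_{d-1}}{2}$ together with \eqref{eq:walk_a} gives \eqref{eq:walk_aa}.

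The proof carries almost no conceptual weight: the whole content is the single observation that the sphere-average of the one-dimensional Fourier integral defining $f$ \emph{is} the polar-coordinate form of the $d$-dimensional Fourier integral defining $F$. The only parts that need genuine care — and the ones I would write out in detail — are the integrability bookkeeping that legitimizes exchanging the integrals in the core step, and the tracking of the almost-everywhere identifications in the last step so that \eqref{eq:walk_aa} is an equality of honest functions rather than of equivalence classes; I do not expect either to be a real obstacle once $\mathcal R_d\rho\in L^1(\Rd)$ is used.
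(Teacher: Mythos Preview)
Your proposal is correct and follows essentially the same approach as the paper: both arguments hinge on writing the $d$-dimensional Fourier integral of $\mathcal R_d\rho$ in polar coordinates to reveal it as a spherical average of one-dimensional Fourier transforms, with the same Fourier-inversion step for \eqref{eq:walk_aa}. The only cosmetic differences are that you run the core computation from the $\E_\xi[f(|\langle x,\xi\rangle|)]$ side toward $\mathcal R_dF$ (the paper goes the other direction) and that you correctly omit the reference to Theorem~\ref{thm:slicing_b}, which the paper mentions but does not actually use in an essential way.
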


\begin{proof}
Using that $v = \|v\| \xi$, where $\xi \in \sphere$, we obtain by assumption 
\begin{align}
\mathcal R_d F (x)
&=   \int_{\R^d} \e^{-2\pi \i \langle x,v \rangle} \, \rho (\|v\|) \,\d v\\
&= \int_{\Sp^{d-1}} \int_0^\infty \e^{-2\pi \i \langle x,\xi \rangle r} \, \rho(r) r^{d-1} \, \d r \,\d \xi\\&
= \frac12 \int_\sphere \int_\R \e^{-2\pi \i \langle x,\xi\rangle r} \rho(r)|r|^{d-1}\,\d r \,\d \xi\\&
=\frac12 \int _ \sphere \mathcal F_1[ \mathcal M_d\rho]({|\langle x,\xi\rangle|} )\, \d\xi\\
&= 
\E_{\xi\sim \mathcal U_{\mathbb S^{d-1}}} \left[ \tfrac{\omega_{d-1}}{2} ( \mathcal F_1 \circ\mathcal M_d)[\rho](|\langle x,\xi\rangle |)
 \right].
\end{align}
On the other hand, we have by Theorem \ref{thm:slicing_b} that $f$ with \eqref{eq:basis_function}
fulfills
\begin{equation}
\mathcal R_d F (x)
=
\E_{\xi\sim \mathcal U_{\mathbb S^{d-1}}} \left[ f \left(|\langle x  ,\xi \rangle |\right) \right].
\end{equation}
This implies that $f=\tfrac{\omega_{d-1}}{2} ( \mathcal F_1 \circ \mathcal M_d)[\rho]$ fulfills 
 \eqref{slicing_def}.
 Since $\mathcal R_d\rho\in L^1(\Rd)$, we know that $\mathcal M_d\rho\in L^1(\R)$ and hence its Fourier transform is continuous, so that $f\in \mathcal C _ 0(\R)$ is even.

If in addition $\mathcal{R}_d F \in L^1(\R^d)$, then 
$\mathcal R_d \rho = \mathcal F_d^{-1}[\mathcal R_d F]$ 
and by \eqref{eq:inv_ar} further
$\rho = (\mathcal A_d \circ \mathcal F_d^{-1} \circ \mathcal R_d) F$. Plugging this into \eqref{eq:walk_a},
we obtain the second assertion.
\end{proof}

\begin{remark}\label{rem:otherinvthm}
The last theorem can be seen as the analogue of the Fourier slice theorem \cite{NatWue01}, which connects the Fourier transform of a $d$-dimensional function with the one-dimensional Fourier transform of its Radon transform. However, our theorem is a slice theorem for the adjoint Radon transform of radial functions, cf.\ Remark~\ref{rem:Radon}.
An alternative proof of \eqref{eq:walk_a} can be derived from Solomon's inversion formula of the adjoint Radon transform \cite[Eq.~(8.2)]{Sol87}, which is stated in terms of the Calderón--Zygmund operator \cite[Def.~1]{CalderonZygmund}. 
Using the Fourier slice theorem combined with the equality \cite[E1.~(2.18)]{Sol87}, equation \eqref{eq:walk_a} can be recovered for certain Schwartz functions. 
\end{remark}

Let us note that another characterization of Fourier transforms of radial 
$L^1$ functions is given by the following remark, see, e.g.\ \cite{grafakos2009}.

\begin{remark}
The Fourier transform of a radial function $\mathcal{R}_d \rho \in L^1 (\R^d)$, $d \ge 2$ is also a radial function and can be written as 
$$
\mathcal F_d \big[\mathcal{R}_d \rho \big] (x)
= 2\pi \|x\|^{1-\nicefrac{d}{2}} \int_0^\infty \rho(r) r^{\nicefrac{d}{2}} J_{\nicefrac{d}{2}-1}(2 \pi r \|x\|) \, \d r, 
$$
where $J_{\nicefrac{d}{2}-1}$ denotes the Bessel function of first kind of order $\nicefrac{d}{2} -1$.
\end{remark}

Combining Theorem \ref{thm:slicing} and Proposition \ref{thm:walk_1} gives the following corollary.

\begin{corollary}\label{cor:smooth}
The Fourier transform of any radial function from $L^1 (\R^d)$
is $\lfloor \nicefrac{(d-2)}{2} \rfloor$ times continuously differentiable on $\R^d\setminus\{0\}$. 
\end{corollary}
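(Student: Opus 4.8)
The plan is to combine the two main results already proved in the excerpt, namely Theorem~\ref{thm:slicing} (smoothness of the Abel-type integral) and Proposition~\ref{thm:walk_1} (the Fourier-analytic inversion formula). Let $\mathcal R_d\rho\in L^1(\R^d)$ be radial and set $F\coloneqq\mathcal A_d\big(\mathcal F_d[\mathcal R_d\rho]\big)$, so that $\mathcal R_d F=\mathcal F_d[\mathcal R_d\rho]$; note $F$ is well defined since $\mathcal F_d[\mathcal R_d\rho]$ is continuous (indeed in $\mathcal C_0(\R^d)$) and radial, and the claim is exactly that $F\in\mathcal C^{\lfloor (d-2)/2\rfloor}((0,\infty))$, equivalently $\mathcal F_d[\mathcal R_d\rho]\in\mathcal C^{\lfloor (d-2)/2\rfloor}(\R^d\setminus\{0\})$ by radiality. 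First I would invoke Proposition~\ref{thm:walk_1}: since $\mathcal R_d\rho\in L^1(\R^d)$, the function $f\coloneqq\tfrac{\omega_{d-1}}{2}(\mathcal F_1\circ\mathcal M_d)[\rho]$ lies in $\mathcal C_0(\R)$ and satisfies the slicing relation \eqref{slicing_def}. Then by Theorem~\ref{thm:slicing_b}, this same $F$ is given by the Abel-type integral \eqref{eq:basis_function} with that $f$.

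Second, I would feed $f$ into Theorem~\ref{thm:slicing}. The issue is the integrability hypothesis there: for odd $d$ one only needs $f\in L^1_{\mathrm{loc}}([0,\infty))$, which is immediate since $f\in\mathcal C_0(\R)$; for even $d$ one needs $f\in L^p_{\mathrm{loc}}([0,\infty))$ with $p>2$, and this is again immediate because a continuous function is locally bounded, hence in $L^p_{\mathrm{loc}}$ for every $p<\infty$. (For $d=2$ the statement $\lfloor (d-2)/2\rfloor=0$ is just continuity of $\widehat{\mathcal R_2\rho}$, which holds on all of $\R^2$; for $d=1$ it is vacuous.) So Theorem~\ref{thm:slicing} applies and yields $F\in\mathcal C^{\lfloor (d-2)/2\rfloor}((0,\infty))$. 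Finally, translating back: $F$ is the radial profile of $\mathcal F_d[\mathcal R_d\rho]$ via $\mathcal F_d[\mathcal R_d\rho]=\mathcal R_d F=F\circ\|\cdot\|$, and $x\mapsto\|x\|$ is $\mathcal C^\infty$ on $\R^d\setminus\{0\}$, so the composition $F\circ\|\cdot\|$ inherits $\mathcal C^{\lfloor (d-2)/2\rfloor}$ smoothness away from the origin, giving the corollary.

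I do not expect a genuine obstacle here — the corollary is essentially a bookkeeping exercise chaining the two theorems — but the one point that deserves care is checking that the hypotheses of Theorem~\ref{thm:slicing} are met for every relevant $d$, in particular confirming that the $L^p_{\mathrm{loc}}$ condition with $p>2$ in the even case is covered by continuity of $f$, and handling the low-dimensional cases $d=1,2$ separately (where the claim reduces to mere continuity, already part of the Riemann--Lebesgue statement $\mathcal F_d\colon L^1\to\mathcal C_0$). One should also be slightly careful that Theorem~\ref{thm:slicing} is stated for $d\ge 3$, so the cases $d\in\{1,2\}$ genuinely need the separate one-line argument via $\mathcal F_d[L^1(\R^d)]\subset\mathcal C_0(\R^d)$. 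With these remarks the proof is complete in a few lines.
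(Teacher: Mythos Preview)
Your proposal is correct and follows essentially the same route as the paper's proof: apply Proposition~\ref{thm:walk_1} to obtain a continuous sliced kernel $f$, then invoke Theorem~\ref{thm:slicing} on the resulting Abel-type integral, and finally use the smoothness of $\|\cdot\|$ on $\R^d\setminus\{0\}$. Your version is in fact slightly more careful than the paper's: you explicitly verify the $L^p_{\mathrm{loc}}$ hypothesis for even $d$ (which the paper leaves implicit in ``$f\in\mathcal C(\R)$''), and you correctly flag that both Proposition~\ref{thm:walk_1} and Theorem~\ref{thm:slicing} are stated only for $d\ge3$, so the trivial cases $d\in\{1,2\}$ need the separate one-line argument via $\mathcal F_d\colon L^1(\R^d)\to\mathcal C_0(\R^d)$.
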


\begin{proof}
    Let $\Phi = \mathcal R_d \rho \in L^1(\Rd)$ for some $\rho\colon[0,\infty) \to \R$. 
    Then the Fourier transform $\mathcal F_d[\Phi]$ exists and is radial. Therefore, a function $F\colon[0,\infty)\to \R$ exists with $\mathcal R_d F= \mathcal F_d[\mathcal R_d \rho]=\mathcal F_d[\Phi]$. By Proposition \ref{thm:walk_1}, it follows that $f$ and $F$ satisfy \eqref{eq:basis_function} as well as $f\in \mathcal C(\R)$. By Theorem \ref{thm:slicing} the function $F$ is $\lfloor \nicefrac{(d-2)}{2} \rfloor$ times continuously differentiable on $(0,\infty)$. The smoothness of the Euclidean norm on $\Rd\setminus\{0\}$ yields the assertion.
\end{proof}

The ``dimension walk'' between Fourier transforms of radial functions in different dimensions
was discussed, e.g. in \cite{Est14,GraTes12}.
Various examples of sliced transform
pairs $(F,f)$ were given in  \cite{hertrich2024,HerJahQue24,Rub04}.
Here are two interesting  ones.

\begin{example} \label{example}
i) If $F(x)= \exp\big(-\frac{x^2}{2}\big) $ is a Gaussian, then the Fourier transform $\rho(x) = (2\pi)^{d/2} \exp(-2\pi^2x^2) $ in Proposition \ref{thm:walk_1} is a Gaussian as well
and we obtain that
$f = \frac{\omega_{d-1}}{2} \mathcal F_1 [\mathcal M_d\rho] = \frac{\omega_{d-1}}{2} {\mathcal F_1[\rho(|\cdot|) |\cdot|^{d-1}]}$ on $\R$ or conversely 
$\mathcal F_1 f = \mathcal F_1^{-1} f = \mathcal M_d \rho$.
We have
$$F(x) = \exp\bigg(-\frac{x^2}{2}\bigg) \quad \Longleftrightarrow \quad f(x) = {}_1F_1 \bigg(\frac{d}{2},\frac{1}{2},-\frac{x^2}{2} \bigg). $$
where the confluent hypergeometric distribution function ${}_1F_1$ is defined by \cite[(15)]{BARNARD1998128}
\begin{align*}
{}_1F_1(a,c,z)\coloneqq \sum_{n=0}^\infty \frac{(a)_n}{(c)_nn!}z^n, \text{ where } (a)_n\coloneqq \frac{\Gamma(a+n)}{\Gamma(a)}.
\end{align*}
The graphs of these functions 
are depicted for dimension $d=10$ in Fig.\ \ref{fig:graphs}.
The function~$\mathcal{R}_d F$ is positive definite in every dimension $d \in \N$, and the function 
$f$ is  positive definite in one dimension by  Bochner's theorem \ref{thm:bochner}.
However, $f$ shows oscillations, which increase with the dimension.
Its Fourier transform $\mathcal F_1 f$ has only two modes, which get separated more far from each other with an increasing dimension, but 
keep their shapes.
 
\begin{figure}
\begin{center}
\begin{tabular}{ccc}
\includegraphics[width=0.25\textwidth]{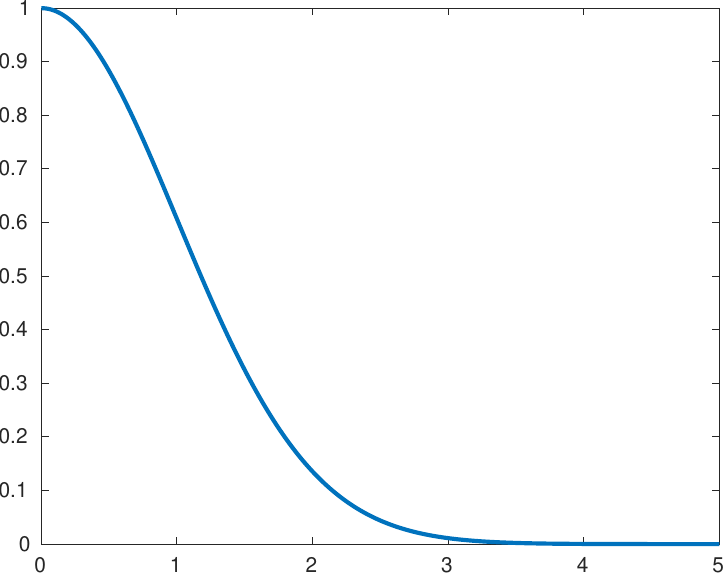} \hspace{0.5cm}&
\includegraphics[width=0.25\textwidth]{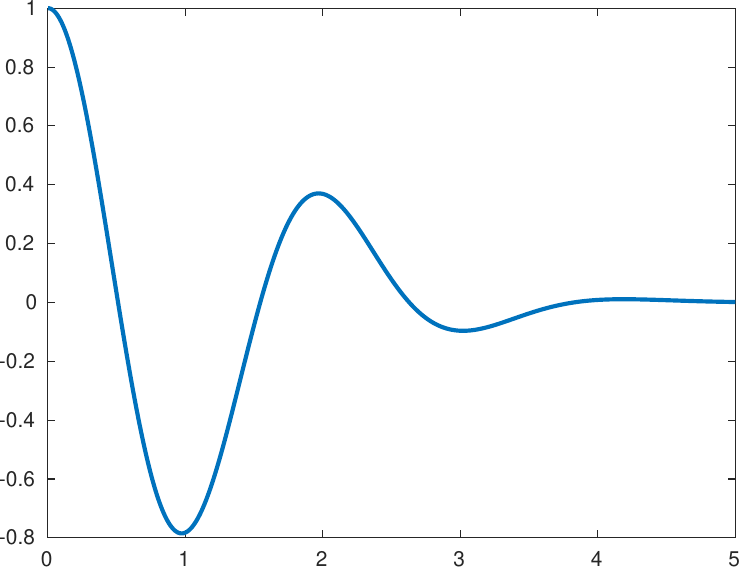} \hspace{0.5cm}&
\includegraphics[width=0.25\textwidth]{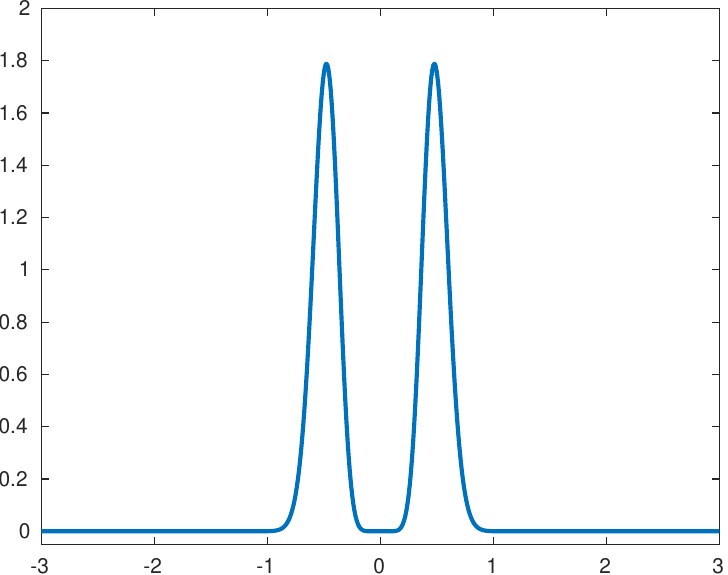}\\
$F$\hspace{0.5cm}&$f$\hspace{0.5cm}&$\mathcal F_1 f$
\end{tabular}
\end{center}
\caption{ Abel-type transform of the confluent hypergeometric distribution $f$ results in the  Gaussian $F$ ($d=10$).}
\label{fig:graphs}
\end{figure}

ii) For Riesz kernels, both $F$ and $f$ have the same structure, 
more precisely, for $r >-1$,
we have
\begin{equation} \label{eq:Riesz}
F(x) =  x^r  \quad \Longleftrightarrow \quad f(x) = \frac{\sqrt\pi \Gamma(\frac{d+r}{2})}{\Gamma(\frac{d}{2})\,\Gamma(\frac{r+1}{2})} \, x^r.
\end{equation}
\end{example}

\section{Slicing of Tempered Distributions} \label{sec:walk_c}

In this section, we extend our considerations to radial tempered distributions. 
In the first two subsections, we present the theory about radial Schwartz functions and respective distributions,
where we partially build up on results in \cite{GraTes12}.  
For a general background on (tempered) distributions, we refer to \cite{GelShi64,PlPoStTa23}.
This allows to show in Subsection~\ref{sec:distributional_slicing} a generalization of the slicing Proposition~\ref{thm:walk_1}.

\subsection{Radial Schwartz Functions}

For a smooth function $\varphi\in \cc^\infty(\Rd)$ and an integer $m\in \N$, define 
\begin{equation}\label{eq:S}
    \|\varphi\|_m\coloneqq \max_{\beta\in \N^d, |\beta|\leqslant m} \|(1+\|\cdot\|)^m D^\beta \varphi\|_\infty,
\end{equation}
where $ |\beta|\coloneqq \sum_{k=1}^d \beta_k$.
The space of \emph{Schwartz functions} $\schd$ consists of all smooth functions $\varphi\in \cc^\infty(\Rd)$ such that $\|\varphi\|_m<\infty$ for all $m\in \N$. 
A sequence $\varphi_n\in \schd$ converges to $\varphi\in \schd$ if
\begin{equation*}
    \limn \|\varphi_n-\varphi\|_m = 0 \forrall m\in \N.
\end{equation*}
The Fourier transform \eqref{fourier_1}
is a linear, bijective and continuous map from $\schd$ to $\schd$.
A Schwartz function $\varphi\in \schd$ is called \textit{radial} if $\varphi=\varphi\circ Q$ for all $Q\in \mathrm{O}(d)$.
The \emph{space of radial Schwartz functions} is denoted by 
$$\schdrad \coloneqq \{ \varphi\in \schd:  
\varphi\circ Q=\varphi \text { for all } Q\in \mathrm{O}(d)  \}.$$
In particular, for $d=1$, we obtain the even Schwartz functions $\schonerad$.

The following theorem shows that the rotation and the averaging operator are well-defined on radial Schwartz functions, i.e., they map radial Schwartz functions to radial Schwartz functions on $\R$ and $\R^d$. We will use $\psi$  to denote one-dimensional Schwartz functions and
$\varphi$ to address $d$-dimensional Schwartz functions.

\begin{theorem}\label{prop def of avg} 
\begin{enumerate}[i)]
\item The rotation operator $\mathcal R_d\colon \schonerad \to \schdrad$ given  by \eqref{op:rot}
    is linear and continuous. In particular, there exist constants $b_m>0$ 
    such that $\|\mathcal R_d \psi\|_m \leqslant b_m\|\psi\|_{4m}$ for all $\psi\in \schonerad$ and all $m \in \mathbb N$.
\item    The averaging operator $\mathcal A_d \colon \schd\to \schonerad$ given by \eqref{op:ave}
    is well-defined and continuous with 
    $\|\mathcal A_d \varphi \|_m\leqslant d^m \|\varphi\|_m$ for all $\varphi\in \schd$ and all $m\in \N$. 
\end{enumerate}    
\end{theorem}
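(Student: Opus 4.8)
The plan is to prove part ii) first and then derive part i) from it by a duality/direct argument. For ii), I would fix $\varphi \in \schd$ and a multi-index; the key observation is that $\mathcal A_d$ commutes with the radial part of the Laplacian, but more elementarily, I would write $\mathcal A_d\varphi$ as an average over the compact group $\mathrm{O}(d)$ (or the sphere) and differentiate under the integral sign. Since $\mathcal A_d\varphi(r) = \frac{1}{\omega_{d-1}}\int_\sphere \varphi(r\xi)\,\d\xi$, the function is smooth in $r$ (differentiation under the integral is justified because $\varphi$ is Schwartz, hence all derivatives are bounded uniformly on the compact sphere), and the chain rule gives $\frac{\d^k}{\d r^k}\mathcal A_d\varphi(r) = \frac{1}{\omega_{d-1}}\int_\sphere \sum_{|\beta|=k}\binom{k}{\beta}\xi^\beta (D^\beta\varphi)(r\xi)\,\d\xi$. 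Taking absolute values, using $|\xi^\beta|\le 1$ on the sphere and $\sum_{|\beta|=k}\binom{k}{\beta} = d^k$, while bounding $(1+|r|)^m|(D^\beta\varphi)(r\xi)| = (1+\|r\xi\|)^m|(D^\beta\varphi)(r\xi)| \le \|\varphi\|_m$, yields $(1+|r|)^m\,|\frac{\d^k}{\d r^k}\mathcal A_d\varphi(r)| \le d^m\|\varphi\|_m$ for each $k \le m$, hence $\|\mathcal A_d\varphi\|_m \le d^m\|\varphi\|_m$. Evenness of $\mathcal A_d\varphi$ is immediate from the invariance of the sphere under $\xi\mapsto-\xi$, so $\mathcal A_d\varphi \in \schonerad$, and the norm estimate gives continuity.

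**Part i).** For the rotation operator, the subtlety is that $\mathcal R_d\psi = \psi(\|\cdot\|)$ need not obviously be smooth at the origin even when $\psi$ is even and smooth; the standard fact (Whitney-type / Glaeser's theorem for even functions) is that an even smooth function $\psi$ on $\R$ can be written as $\psi(r) = g(r^2)$ for a smooth $g$, so $\mathcal R_d\psi(x) = g(\|x\|^2)$ is smooth on all of $\R^d$ since $x\mapsto\|x\|^2$ is a polynomial. I would either invoke this directly or, to keep things self-contained, note that $\psi \in \schonerad$ implies $\psi(r)=g(r^2)$ with $g\in \cc^\infty([0,\infty))$ and all derivatives controlled by those of $\psi$, then apply Faà di Bruno / the multivariate chain rule to $g(\|x\|^2)$. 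Each derivative $D^\beta[g(\|\cdot\|^2)](x)$ with $|\beta| = k$ is a finite sum of terms $g^{(j)}(\|x\|^2)\cdot(\text{polynomial in }x\text{ of degree }\le 2j - (2j-k)=k\text{... })$ — more precisely of degree at most $k$ when paired correctly — and $g^{(j)}(\|x\|^2)$ relates to $\psi^{(2j)}$ and lower derivatives of $\psi$ evaluated at $\|x\|$. Tracking the polynomial weights and the decay, one gets $(1+\|x\|)^m|D^\beta \mathcal R_d\psi(x)| \le b_m \|\psi\|_{cm}$ for a suitable constant $c$; the paper's bound claims $c=4$, reflecting the factor-of-two loss from $r^2$ (each of the $m$ derivatives can produce a second-order derivative of $\psi$, so order up to $2m$, plus polynomial weights of degree up to $2m$ interacting with the $(1+\|x\|)^m$ factor — the $4m$ is a comfortable over-estimate). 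Radiality and evenness of $\mathcal R_d\psi$ are clear from the definition.

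**Main obstacle.** The genuinely delicate point is the smoothness of $\mathcal R_d\psi$ at the origin and extracting the sharp-ish constant $b_m$; part ii) is essentially routine differentiation under an integral sign over a compact set. I would handle the origin issue by the representation $\psi(r) = g(r^2)$ with explicit control $\sup_{t\in[0,T]}|g^{(j)}(t)| \lesssim \sum_{\ell\le 2j}\sup_{s\le\sqrt T}|\psi^{(\ell)}(s)|$ (which follows from Taylor-expanding $\psi$ in the even variable, or from the integral formula for $g$), and then feed this into the chain-rule expansion, being careful that the polynomial factors of degree up to $2m$ that appear are absorbed by splitting $(1+\|x\|)^m = (1+\|x\|)^{m}$ against the extra decay available in $\|\psi\|_{4m}$ (which carries weight $(1+\|\cdot\|)^{4m}$). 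I expect the bookkeeping of which derivative-orders and which polynomial degrees co-occur to be the only place requiring care; everything else is a direct estimate.
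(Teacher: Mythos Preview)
Your proposal is correct and close in spirit to the paper's proof, but the packaging for part~i) differs in a way worth noting.

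For part~ii) your argument is essentially the paper's: differentiate under the spherical integral and bound. The only cosmetic difference is that you expand $\frac{\d^k}{\d r^k}\varphi(r\xi)$ directly via the multinomial identity $\sum_{|\beta|=k}\binom{k}{\beta}=d^k$, whereas the paper first rotates $\xi$ to $e_1$ (so the $r$-derivative becomes a single partial derivative) and then absorbs the rotation using a separate lemma $\|\varphi\circ Q\|_m\le d^m\|\varphi\|_m$. Both routes give exactly $d^m$.

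For part~i) the paper does \emph{not} invoke Glaeser/Whitney. Instead it introduces the operator $W[\psi](r)\coloneqq \psi'(r)/r$ (extended by $\psi''(0)$ at $0$), proves from scratch that $W\colon\schonerad\to\schonerad$ with $\|W[\psi]\|_m\le a_m\|\psi\|_{m+2}$, and then shows inductively that
\[
D^\alpha(\psi\circ\|\cdot\|)(x)=\sum_{k=1}^{|\alpha|}p_{\alpha,k}(x)\,W^k[\psi](\|x\|),\qquad \deg p_{\alpha,k}\le k.
\]
This is exactly your Glaeser route in disguise: if $\psi(r)=g(r^2)$ then $W^k[\psi](r)=2^k g^{(k)}(r^2)$, so the paper's lemma on $W$ \emph{is} the quantitative statement ``derivatives of $g$ are controlled by twice as many derivatives of $\psi$'' that you said you would need. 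What you buy by citing Glaeser is brevity; what the paper buys is a self-contained proof (its Lemma on the operator $V[\psi]=\psi/x$ is precisely the hands-on work you flagged as the main obstacle near the origin). The $4m$ arises identically in both arguments: polynomial weights of degree $\le m$ combine with the weight $(1+\|x\|)^m$ to require the $2m$-seminorm of $W^k[\psi]$, and each of the $\le m$ applications of $W$ costs two more derivatives.
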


A rough sketch of the proof can be found in \cite{GraTes12}. We give a rigorous proof in Appendix~\ref{app:avg}.
The proof of i) is technically more involved than ii). 
We start with a one-dimesnional function $\psi\in \schonerad$ and estimate the derivatives of the $d$-dimensional function $\mathcal R_d\psi(x)=\psi(\|x\|)$. At the origin, we cannot apply the chain rule to deduce smoothness of $\mathcal R_d\psi$, as the norm is not differentiable there. 
Note that we estimate $\|\mathcal R_d\psi\|_m$ by a multiple of $\|\psi\|_{4m}$ rather then $\|\psi\|_m$ due to the curvature of the norm.
Consider for example $\psi$ to be linear on some interval $I$, then $\psi''$ vanishes on $I$, but the second order derivatives of $\mathcal R_d\psi$ do not.

Part ii) is proven by estimating the derivatives of $ r\mapsto \varphi(r \xi)$. If $\xi$ is a unit vector $e_j$, then $\frac{\d}{\d r}\varphi(re_j)=\frac{\partial}{\partial e_j}\varphi(re_j)$ and we estimate it by $\|\varphi\|_1$. Otherwise, we find a rotation matrix $Q$ that maps $\xi$ to $e_1$. Then the factor $d^m$ comes in as estimate of the derivatives of the rotation $Q$.

The following corollary is a direct consequence of the above theorem and the relations
\eqref{eq:inv_ar} and \eqref{eq:inv_ra}.
It shows that $\mathcal A_d$ restricted to $\schdrad$ is bijective.

\begin{corollary}\label{corollary avg inv is rot d}
\begin{enumerate}[i)]
\item 
    The concatenated operator 
    $
   \mathcal R_d \circ \mathcal A_d \colon \schd\to \schdrad
    $
    is a continuous projection onto $\schdrad$, i.e., it is surjective and $(\mathcal R_d \circ \mathcal A_d) ^2=\mathcal R_d \circ \mathcal A_d$.
\item     The operator $\mathcal A_d \colon \schdrad\to \schonerad$ is a homeomorphism, i.e., it is bijective and continuous with continuous inverse, 
$$
\mathcal A_d \circ \mathcal R_d
= \mathrm{Id}_{\schonerad}
\quad \text{and} \quad
\mathcal R_d \circ \mathcal A_d
= \mathrm{Id}_{\schdrad}.
$$

\end{enumerate}
\end{corollary}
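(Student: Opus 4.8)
The plan is to deduce Corollary~\ref{corollary avg inv is rot d} entirely from Theorem~\ref{prop def of avg} together with the elementary inversion identities \eqref{eq:inv_ar} and \eqref{eq:inv_ra}. The only genuine content beyond those ingredients is bookkeeping about domains and ranges, so I will organize the argument around making those precise.

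For part ii), I would first check that $\mathcal A_d$ restricted to $\schdrad$ actually lands in $\schonerad$: this is immediate from part~ii) of Theorem~\ref{prop def of avg}, since $\schdrad \subset \schd$ and the image of any function under $\mathcal A_d$ is even by construction. Next, I would establish the two composition identities. The identity $\mathcal A_d \circ \mathcal R_d = \mathrm{Id}_{\schonerad}$ follows by applying \eqref{eq:inv_ar} pointwise to every $\psi \in \schonerad$ (recalling that $\psi$ is identified with its even extension), and $\mathcal R_d \circ \mathcal A_d = \mathrm{Id}_{\schdrad}$ follows from \eqref{eq:inv_ra} applied to every radial $\varphi$, which is exactly the class $\schdrad$. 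These two identities show simultaneously that $\mathcal A_d\colon\schdrad\to\schonerad$ is injective (it has a left inverse) and surjective (it has a right inverse, namely $\mathcal R_d$), hence bijective, with inverse precisely $\mathcal R_d$. Continuity of $\mathcal A_d$ is the estimate in Theorem~\ref{prop def of avg}~ii), and continuity of the inverse $\mathcal R_d\colon\schonerad\to\schdrad$ is the estimate in Theorem~\ref{prop def of avg}~i); together these give that $\mathcal A_d$ is a homeomorphism.

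For part i), I would write $P \coloneqq \mathcal R_d \circ \mathcal A_d \colon \schd \to \schd$. Continuity is clear as a composition of the two continuous maps from Theorem~\ref{prop def of avg} (with the chain of estimates $\|P\varphi\|_m \le b_m \|\mathcal A_d\varphi\|_{4m} \le b_m d^{4m}\|\varphi\|_{4m}$). For the range, $\mathcal A_d\varphi$ is even and $\mathcal R_d$ of any function on $\R$ is radial, so $P\varphi \in \schdrad$; conversely $\schdrad \subseteq \ran P$ because by part~ii) any $\varphi \in \schdrad$ equals $\mathcal R_d(\mathcal A_d\varphi) = P\varphi$, so $P$ is surjective onto $\schdrad$. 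The idempotency $P^2 = P$ then follows from $P^2\varphi = \mathcal R_d\mathcal A_d(\mathcal R_d\mathcal A_d\varphi)$ and the identity $\mathcal A_d \circ \mathcal R_d = \mathrm{Id}$ on $\schonerad$ applied to the inner $\mathcal A_d\varphi \in \schonerad$, which collapses the middle two factors.

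I do not anticipate a serious obstacle here, since all the analytic work (smoothness, decay, and the norm estimates) is already packaged in Theorem~\ref{prop def of avg}; the corollary is purely formal. The one point requiring a little care is the identification of functions on $[0,\infty)$ with their even extensions to $\R$, so that the statements $\mathcal A_d \circ \mathcal R_d = \mathrm{Id}_{\schonerad}$ and the appearance of $\schonerad$ (rather than $\mathcal S([0,\infty))$) are consistent with the conventions fixed around \eqref{op:rot}; I would remark on this explicitly rather than leave it implicit.
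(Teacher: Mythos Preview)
Your proposal is correct and follows exactly the approach the paper indicates: the paper simply states that the corollary is ``a direct consequence of the above theorem and the relations \eqref{eq:inv_ar} and \eqref{eq:inv_ra},'' and you have filled in precisely that bookkeeping. Your explicit attention to the identification of functions on $[0,\infty)$ with their even extensions is a helpful addition that the paper leaves implicit.
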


\subsection{Radial Tempered Distributions}
The space of \emph{tempered distributions} $\tempd$ consists of all continuous linear functionals on $\schd$. A sequence $T_k\in \tempd$ converges to $T\in \tempd$ if $\lim_{k\to \infty} \langle T_k,\varphi \rangle = \langle T,\varphi \rangle$ for all $\varphi \in \schd$.
The Fourier transform $\mathcal F_d\colon \tempd \to \tempd$
is the linear, continuous operator defined for a tempered distribution $T\in\tempd$   by 
\begin{equation} \label{eq:Fourier temp}
\inn{\mathcal F_d T,\varphi} \coloneqq \inn{T,\mathcal F_d\varphi} 
\quad \text{for all} \quad \varphi\in\schd.
\end{equation}
A distribution $T\in \tempd$ is called \textit{radial} if 
$$
\langle T,\varphi\circ Q\rangle=\langle T,\varphi\rangle \quad \text{for all} \quad Q\in \mathrm{O}(d),\ \varphi\in\schd.
$$ 
The \emph{space of all radial tempered distributions} is denoted by 
$$\tempdrad\coloneqq\{ T\in\tempd: 
\langle T,\varphi\circ Q\rangle =\langle T,\varphi\rangle
\text{ for all } Q\in \mathrm{O}(d) \text{ and all } \varphi\in \schd \}.
$$
Since we obtain for $T\in\tempd$ and every $\varphi \in \schd$
that
$$
\langle \mathcal F_d T, \varphi\circ Q\rangle
=
\langle  T, \mathcal F_d(\varphi\circ Q) \rangle
=
\langle  T, (\mathcal F_d \varphi) \circ Q \rangle
=
\langle  T, \mathcal F_d \varphi  \rangle
=
\langle \mathcal F_d T, \varphi\rangle,
$$
we conclude that the Fourier transform of a radial tempered distribution is again radial.

The proof of the following lemma can be found in \cite[Prop.~3.2]{GraTes12}.
Based on Corollary \ref{corollary avg inv is rot d}\,i), the operator
$\mathcal R_d \circ \mathcal A_d$ maps  $\schd$ onto $\schdrad$,
so that the lemma finally states that radial distributions can be determined by testing just with radial Schwartz functions.

\begin{lemma}\label{lemma invariand under Rad}
    For $T\in\tempdrad$ and every $\varphi \in \schd$, it holds
       $\langle T,(\mathcal R_d \circ  \mathcal A_d) \varphi \rangle =\langle T, \varphi\rangle $.
      In particular, a radial distribution is uniquely determined by its application to radial Schwartz functions.
\end{lemma}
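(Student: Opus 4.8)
The plan is to prove Lemma~\ref{lemma invariand under Rad} by unwinding the definitions of the operators and exploiting the $\mathrm O(d)$-invariance of the Haar measure on the sphere together with the radiality hypothesis on $T$. First I would recall that $(\mathcal R_d\circ\mathcal A_d)\varphi(x)=\frac1{\omega_{d-1}}\int_\sphere \varphi(\|x\|\xi)\,\d\xi$, and that by Corollary~\ref{corollary avg inv is rot d}\,i) this is again a Schwartz function, so the pairing $\langle T,(\mathcal R_d\circ\mathcal A_d)\varphi\rangle$ makes sense. The key step is to interpret the spherical average as an average of rotates of $\varphi$: for each fixed $Q\in\mathrm O(d)$ one has $\mathcal A_d(\varphi\circ Q)=\mathcal A_d\varphi$, and more usefully one can write $(\mathcal R_d\circ\mathcal A_d)\varphi$ as an integral over $\mathrm O(d)$ against the normalized Haar measure $\d Q$, namely $(\mathcal R_d\circ\mathcal A_d)\varphi = \int_{\mathrm O(d)}(\varphi\circ Q)\,\d Q$, since for fixed $x$ the map $Q\mapsto Qx/\|x\|$ pushes Haar measure forward to $\mathcal U_\sphere$.

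Next I would justify interchanging the application of $T$ with the Haar integral. The map $Q\mapsto \varphi\circ Q$ is continuous from $\mathrm O(d)$ into $\schd$ (this follows from the Schwartz seminorm estimates, essentially the same ingredients used in Theorem~\ref{prop def of avg}), so the $\schd$-valued integral $\int_{\mathrm O(d)}(\varphi\circ Q)\,\d Q$ is a Bochner integral and coincides with $(\mathcal R_d\circ\mathcal A_d)\varphi$; since $T$ is a continuous linear functional it commutes with this integral, giving
\begin{equation*}
\langle T,(\mathcal R_d\circ\mathcal A_d)\varphi\rangle = \int_{\mathrm O(d)} \langle T,\varphi\circ Q\rangle\,\d Q.
\end{equation*}
Now the radiality of $T$ gives $\langle T,\varphi\circ Q\rangle=\langle T,\varphi\rangle$ for every $Q$, and since the Haar measure is normalized the integral collapses to $\langle T,\varphi\rangle$, which is the claimed identity. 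Alternatively, to avoid vector-valued integration one can approximate the Haar integral by finite averages $\frac1K\sum_{k=1}^K \varphi\circ Q_k$ of rotates (Riemann-sum style, using uniform continuity of $Q\mapsto\varphi\circ Q$ in the Schwartz topology), apply the linear functional $T$ to each term, use radiality termwise, and pass to the limit using continuity of $T$ and convergence of the sums to $(\mathcal R_d\circ\mathcal A_d)\varphi$ in $\schd$.

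For the ``in particular'' clause: suppose $T_1,T_2\in\tempdrad$ agree on all radial Schwartz functions. Given an arbitrary $\varphi\in\schd$, the function $(\mathcal R_d\circ\mathcal A_d)\varphi$ lies in $\schdrad$ by Corollary~\ref{corollary avg inv is rot d}\,i), hence $\langle T_1,(\mathcal R_d\circ\mathcal A_d)\varphi\rangle=\langle T_2,(\mathcal R_d\circ\mathcal A_d)\varphi\rangle$; applying the identity just proved to both sides yields $\langle T_1,\varphi\rangle=\langle T_2,\varphi\rangle$, so $T_1=T_2$. The main obstacle is the justification of moving $T$ through the averaging operation: one must be careful that the spherical/Haar average really is a convergent integral or limit in the Schwartz topology (not merely pointwise), which is exactly what Theorem~\ref{prop def of avg} and Corollary~\ref{corollary avg inv is rot d} are set up to provide; once that continuity is in hand, the rest is a one-line application of radiality.
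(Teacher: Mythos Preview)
Your proof is correct. The paper does not supply its own argument but cites \cite[Prop.~3.2]{GraTes12}; the approach there is the same as yours---express $(\mathcal R_d\circ\mathcal A_d)\varphi$ as the Haar average $\int_{\mathrm O(d)}\varphi\circ Q\,\d Q$, pass $T$ through the integral (justified via continuity of $Q\mapsto\varphi\circ Q$ in $\schd$ and compactness of $\mathrm O(d)$), and invoke radiality termwise.
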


Next, we consider analogues of the operators $\mathcal A_d$ and $\mathcal R_d$ for distributions. As with the Fourier transform, they are constructed via the application of an ``adjoint'' operator to Schwartz functions. We define the  operator
$\temprot_d^\star\colon \tempd\to \temponerad$ by
\begin{equation} \label{eq:Rd-star}
\langle \temprot_d^\star T , \psi \rangle \coloneqq 
\langle  T , (\mathcal R_d \circ \mathcal A_1)\psi \rangle
\quad \text{for all} \quad \psi \in \mathcal S(\R).
\end{equation}
Indeed, $\temprot_d^\star T \in \temponerad$ for every $T \in \tempd$, since we have for every  $\psi \in \mathcal S(\R)$ that
$$
\langle \temprot_d^\star T , \psi(- \cdot) \rangle 
= 
\langle  T , \mathcal R_d \left( \mathcal A_1 \psi(-\cdot) \right) \rangle
=
\langle  T , \mathcal R_d \left( \mathcal A_1 \psi \right) \rangle
= \langle \temprot_d^\star T , \psi \rangle.
$$
Further, we introduce the operator
$\mathcal A_d^*\colon \tempone\to\tempdrad$ by
\begin{equation} \label{eq:Ad-star}
\langle \tempavg_d^* \tau , \varphi \rangle 
\coloneqq \langle  \tau , \mathcal A_d \varphi \rangle
\quad \text{for all} \quad \varphi \in \schd,
\end{equation}
Clearly, $\tempavg_d^* \tau \in \tempdrad$ for every $\tau \in \tempone $, since
we get for every $Q \in \mathrm{O}(d)$ and every $\varphi \in \schd$ that
$$
\langle \tempavg_d^* \tau , \varphi \circ Q \rangle
=
\langle \tau , \mathcal A_d(\varphi \circ Q) \rangle
=
\langle \tau , \mathcal A_d \varphi  \rangle
=
\langle \tempavg_d^* \tau , \varphi \rangle.
$$
Note that $\mathcal A_d^*$ is the adjoint operator of $\mathcal A_d\colon\schd\to\schonerad$, while $\mathcal R_d^\star$ is the radial extension of the adjoint of $\mathcal R_d\colon\schonerad\to\schdrad$ to $\schone$.
In the following, we will use  $\tau$ to denote one-dimensional  distributions and
$T$ for $d$-dimensional  distributions.
The next proposition shows that $\temprot_d^\star$ and $\tempavg_d^*$ become bijective when restricted to radial distributions.

\begin{lemma}\label{Lemma Rot inverse to avg}
The restrictions 
$\temprot_d^\star \colon \tempdrad\to\temponerad$
and
$\tempavg_d^*\colon \temponerad\to \tempdrad$
are bijective and inverse to each other, i.e.,
$$
\temprot_d^\star \circ \tempavg_d^* = \mathrm{Id}_\tempdrad
\quad \text{and} \quad 
\tempavg_d^* \circ \temprot_d^\star = \mathrm{Id}_\temponerad.
$$
\end{lemma}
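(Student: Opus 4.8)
The plan is to establish the two composition identities
$\temprot_d^\star\circ\tempavg_d^* = \mathrm{Id}_\temponerad$ and
$\tempavg_d^*\circ\temprot_d^\star = \mathrm{Id}_\tempdrad$ directly from the
defining relations \eqref{eq:Rd-star} and \eqref{eq:Ad-star}, using Corollary
\ref{corollary avg inv is rot d}\,ii) and Lemma \ref{lemma invariand under Rad}.
Once both identities are in place, the claim follows at once: each of
$\temprot_d^\star$ and $\tempavg_d^*$ is then a two-sided inverse of the other,
hence both are bijective; moreover the mapping properties
$\temprot_d^\star T\in\temponerad$ for $T\in\tempd$ and
$\tempavg_d^*\tau\in\tempdrad$ for $\tau\in\tempone$ were already verified above,
so the restrictions $\temprot_d^\star\colon\tempdrad\to\temponerad$ and
$\tempavg_d^*\colon\temponerad\to\tempdrad$ are well defined into the stated
spaces.

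For the first identity, I would fix $\tau\in\temponerad$ and $\psi\in\schone$ and
unwind the definitions:
$$
\langle \temprot_d^\star(\tempavg_d^*\tau),\psi\rangle
= \langle \tempavg_d^*\tau, (\mathcal R_d\circ\mathcal A_1)\psi\rangle
= \langle \tau, (\mathcal A_d\circ\mathcal R_d)(\mathcal A_1\psi)\rangle .
$$
Since $\mathcal A_1\psi\in\schonerad$ and
$\mathcal A_d\circ\mathcal R_d=\mathrm{Id}_{\schonerad}$ by Corollary
\ref{corollary avg inv is rot d}\,ii), the right-hand side equals
$\langle\tau,\mathcal A_1\psi\rangle$. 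Finally, because $\mathrm O(1)=\{\pm1\}$,
radiality of $\tau$ gives $\langle\tau,\psi(-\cdot)\rangle=\langle\tau,\psi\rangle$,
whence $\langle\tau,\mathcal A_1\psi\rangle
=\tfrac12\langle\tau,\psi+\psi(-\cdot)\rangle=\langle\tau,\psi\rangle$, and
$\temprot_d^\star\circ\tempavg_d^* = \mathrm{Id}_\temponerad$.

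For the second identity, I would fix $T\in\tempdrad$ and $\varphi\in\schd$ and
compute similarly:
$$
\langle \tempavg_d^*(\temprot_d^\star T),\varphi\rangle
= \langle \temprot_d^\star T, \mathcal A_d\varphi\rangle
= \langle T, (\mathcal R_d\circ\mathcal A_1)(\mathcal A_d\varphi)\rangle .
$$
Here $\mathcal A_d\varphi\in\schonerad$ is even, so
$\mathcal A_1(\mathcal A_d\varphi)=\mathcal A_d\varphi$, and the right-hand side
reduces to $\langle T,(\mathcal R_d\circ\mathcal A_d)\varphi\rangle$, which equals
$\langle T,\varphi\rangle$ by Lemma \ref{lemma invariand under Rad}. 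The
computations are routine; the only point needing care — the modest ``obstacle''
— is the bookkeeping of which operator acts on which space, and in particular
using the one-dimensional identity
$\mathcal A_1\psi=\tfrac12(\psi+\psi(-\cdot))$ together with the fact that radial
tempered distributions on $\R$ are exactly the even ones, rather than trying to
invoke Lemma \ref{lemma invariand under Rad} in dimension one. With both
composition identities established, $\temprot_d^\star$ and $\tempavg_d^*$ are
mutually inverse bijections between $\tempdrad$ and $\temponerad$, which is the
assertion.
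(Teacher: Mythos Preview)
Your proof is correct and follows essentially the same route as the paper's: unwind the definitions \eqref{eq:Rd-star} and \eqref{eq:Ad-star}, use $\mathcal A_d\circ\mathcal R_d=\mathrm{Id}_{\schonerad}$ from Corollary~\ref{corollary avg inv is rot d}\,ii), Lemma~\ref{lemma invariand under Rad} for the $d$-dimensional side, and evenness of $\tau$ for the one-dimensional side. Your argument is in fact slightly more explicit than the paper's at the step $\langle\tau,\mathcal A_1\psi\rangle=\langle\tau,\psi\rangle$, which the paper compresses, and you also silently correct the swap of $\mathrm{Id}_\tempdrad$ and $\mathrm{Id}_\temponerad$ in the displayed identities of the statement.
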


\begin{proof}
Let $T\in \tempdrad$.
For all $\varphi\in \schd$, we see that $\tempavg_d \varphi$ is even and obtain by Lemma~\ref{lemma invariand under Rad} that
\begin{equation*}
    \langle (\tempavg_d^* \circ \temprot_d^\star) T, \varphi\rangle
     =
     \langle T, (\mathcal R_d \circ \mathcal A_1\circ  \tempavg_d) \varphi\rangle
     =
     \langle T, (\mathcal R_d \circ  \tempavg_d) \varphi\rangle
       =\langle T, \varphi\rangle.
\end{equation*}
Let $\tau\in \temponerad$.
Then, it follows
for all $\psi \in \mathcal S(\R)$ 
by Corollary \ref{corollary avg inv is rot d}\,ii) that
\begin{equation*}
    \langle (\temprot_d^\star \circ \tempavg_d^*) \tau, \psi\rangle
    =
    \langle \tau, (\tempavg_d \circ \temprot_d \circ \mathcal A_1) \psi\rangle 
    =
    \langle \tau, (\tempavg_d \circ \temprot_d) \left(\mathcal A_1 \psi \right)\rangle 
    = 
    \langle \tau, \psi\rangle. \qedhere
\end{equation*}
\end{proof}

\subsection{Slicing of Radial Regular Tempered Distributions}
\label{sec:distributional_slicing}
In the following, we are mainly interested in tempered distributions of function type, where we skip the word ''tempered'' in the following.
A distribution $T\in \tempd$ is called \emph{regular} if it is generated by a function $T\in L^1_\mathrm{loc}(\Rd)$ via
\begin{equation*}
    \langle T, \varphi\rangle \coloneqq \int_\Rd T(x)\varphi(x)\,\d x\forrall \varphi\in \schd.
\end{equation*}
Regular distributions were characterized in \cite{Szm83}.
Clearly, radial regular distributions arise from radial functions.
A function $T\in L^1_\mathrm{loc}(\Rd)$ is \emph{slowly increasing} if there exist $c,k,R>1$ such that, 
\begin{equation}\label{eq:slow-increase}
|T(x)|\le c\|x\|^k \forrall \|x\|>R,
\end{equation}
see, e.g., \cite{W2004}.
The notion of slow increase is sometimes defined slightly different in the literature, requiring \eqref{eq:slow-increase} to hold on $\R^d$.
Every slowly increasing function generates a regular distribution, but the converse does not hold true.
We are interested in even functions $F$ 
such that $\mathcal R_d F$ is a regular distribution on $\R^d$. Note that this is a weaker assumption than saying that
$F$ itself is an even regular distribution on $\R$.
Then, we can associate to $F$ a distribution
\begin{equation}\label{eq:f_dist}
f \coloneqq
    (\mathcal F_1 \circ \mathcal R_d^\star \circ \mathcal F_d \inv) [ \mathcal R_d F] =  (\mathcal F_1 \circ \mathcal R_d^\star \circ \mathcal F_d \inv \circ \mathcal R_d) F.
\end{equation}
Note that $\mathcal F_d \inv [\mathcal R_d F]$ is in general not a function again.
However, by the following proposition, we will see that \eqref{eq:f_dist} coincides with \eqref{eq:walk_aa} if this is the case.

\begin{proposition} \label{lem: rot distr charac}
\begin{itemize}
 \item[i)]    
 Let $T\in\tempdrad$ be a radial regular tempered distribution. Then  
    \begin{equation}\label{eq:rot distr chrac}
    \langle \temprot_d^\star T , \psi \rangle =
    \tfrac{\omega_{d-1}}{2} \langle  (\mathcal M_d \circ \mathcal A_d )T, \psi \rangle
    \quad \text{for all}  \quad \psi\in \mathcal S(\R).
    \end{equation}
  \item[ii)] 
  For a function $F\colon [0,\infty) \to \R$, let $\mathcal R_d F$ as well as
  $\mathcal F_d^{-1} [\mathcal R_d F]$ be regular distributions.
  Then $f$ in \eqref{eq:f_dist} has the form
  $$
  f=\tfrac{\omega_{d-1}}{2}(\mathcal F_1\circ \mathcal M_d\circ\mathcal A_d\circ \mathcal F_d\inv \circ \mathcal R_d)F.
  $$
\end{itemize}
\end{proposition}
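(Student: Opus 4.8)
The plan is to prove part i) first by unwinding both sides against a test function $\psi\in\mathcal S(\R)$, and then obtain part ii) as a direct corollary by substituting the regular distribution $T = \mathcal F_d^{-1}[\mathcal R_d F]$ into i) and applying $\mathcal F_1$.

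\textbf{Part i).} Let $T\in\tempdrad$ be regular, generated by a radial locally integrable function, which I also write as $T = \mathcal R_d G$ for the even function $G = \mathcal A_d T$ (well-defined by \eqref{eq:inv_ra} and the discussion of $\mathcal A_d$). Fix $\psi\in\mathcal S(\R)$. By the definition \eqref{eq:Rd-star} of $\temprot_d^\star$,
\begin{equation*}
\langle \temprot_d^\star T,\psi\rangle = \langle T,(\mathcal R_d\circ\mathcal A_1)\psi\rangle = \int_{\R^d} G(\|x\|)\,(\mathcal A_1\psi)(\|x\|)\,\d x,
\end{equation*}
since $T$ is regular with density $G\circ\|\cdot\|$ and $(\mathcal R_d\circ\mathcal A_1)\psi$ is the radial function $(\mathcal A_1\psi)(\|\cdot\|)$. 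Now I would pass to polar coordinates $x = r\xi$, $r>0$, $\xi\in\sphere$: the integrand is radial, so the integral equals $\omega_{d-1}\int_0^\infty G(r)\,(\mathcal A_1\psi)(r)\,r^{d-1}\,\d r$. Since $\mathcal A_1\psi = \tfrac12(\psi(\cdot)+\psi(-\cdot))$ is the even part of $\psi$ and $G$, $r^{d-1}$ extend evenly, symmetrizing gives $\omega_{d-1}\int_0^\infty(\cdots)\,\d r = \tfrac{\omega_{d-1}}{2}\int_\R G(|r|)\,|r|^{d-1}\,\psi(r)\,\d r$. By definition \eqref{op:mult_m} of $\mathcal M_d$ and \eqref{op:ave}, $G(|r|)|r|^{d-1} = (\mathcal M_d G)(r) = (\mathcal M_d\circ\mathcal A_d)T\,(r)$ as functions on $\R$, so the last integral is exactly $\tfrac{\omega_{d-1}}{2}\langle(\mathcal M_d\circ\mathcal A_d)T,\psi\rangle$. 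This proves \eqref{eq:rot distr chrac}; one should note in passing that $(\mathcal M_d\circ\mathcal A_d)T$ is again slowly increasing, hence a legitimate regular distribution, so both sides make sense.

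\textbf{Part ii).} Assume $\mathcal R_d F$ and $T\coloneqq\mathcal F_d^{-1}[\mathcal R_d F]$ are both regular. The Fourier transform of a radial distribution is radial, so $T\in\tempdrad$, and part i) applies to $T$:
\begin{equation*}
\temprot_d^\star\big(\mathcal F_d^{-1}[\mathcal R_d F]\big) = \tfrac{\omega_{d-1}}{2}\,(\mathcal M_d\circ\mathcal A_d)\big(\mathcal F_d^{-1}[\mathcal R_d F]\big) = \tfrac{\omega_{d-1}}{2}\,(\mathcal M_d\circ\mathcal A_d\circ\mathcal F_d^{-1}\circ\mathcal R_d)F,
\end{equation*}
an equality of regular distributions, hence of functions in $L^1_{\mathrm{loc}}$. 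Applying $\mathcal F_1$ to both sides and comparing with the definition \eqref{eq:f_dist} of $f$ gives
\begin{equation*}
f = (\mathcal F_1\circ\temprot_d^\star\circ\mathcal F_d^{-1}\circ\mathcal R_d)F = \tfrac{\omega_{d-1}}{2}(\mathcal F_1\circ\mathcal M_d\circ\mathcal A_d\circ\mathcal F_d^{-1}\circ\mathcal R_d)F,
\end{equation*}
which is the claimed formula, and which coincides with \eqref{eq:walk_aa}.

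\textbf{Main obstacle.} The routine parts are the polar-coordinate change and the even/odd symmetrization; the point requiring care is justifying that every object in sight is a bona fide regular tempered distribution so that the distributional identities may be read as pointwise (a.e.) identities of functions — in particular that $(\mathcal M_d\circ\mathcal A_d)T$ defines a tempered distribution (it does, since multiplying a slowly increasing function by $|r|^{d-1}$ keeps it slowly increasing, and local integrability on $\R$ follows from local integrability of the radial density on $\R^d$ after the $r^{d-1}$ Jacobian is absorbed), and that the hypothesis ``$\mathcal F_d^{-1}[\mathcal R_d F]$ is regular'' is exactly what is needed to invoke part i). I would also make explicit that $\mathcal A_d$ applied to a radial regular distribution is just the radial profile, so no subtlety about distributional averaging arises here.
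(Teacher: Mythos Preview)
Your proof is correct and follows essentially the same route as the paper: write the regular radial distribution as $T=\mathcal R_d G$ with $G=\mathcal A_d T$, unfold $\langle\mathcal R_d^\star T,\psi\rangle=\int_{\R^d}G(\|x\|)(\mathcal A_1\psi)(\|x\|)\,\d x$, pass to polar coordinates, and symmetrize to get $\tfrac{\omega_{d-1}}{2}\int_\R \mathcal M_d G\cdot\psi$; part ii) then follows by substituting $T=\mathcal F_d^{-1}[\mathcal R_d F]$. The only cosmetic difference is that the paper first invokes Lemma~\ref{lemma invariand under Rad} to reduce to even test functions $\psi\in\schonerad$ (so that $\mathcal A_1\psi=\psi$ and the symmetrization is trivial), whereas you keep a general $\psi$ and symmetrize explicitly via the even part; the computations are otherwise identical. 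One small caution: your remark that $(\mathcal M_d\circ\mathcal A_d)T$ is ``slowly increasing'' overshoots, since a regular tempered distribution need not be generated by a slowly increasing function; what you actually need (and what your polar-coordinate identity already delivers) is that the pairing $\psi\mapsto\int_\R \mathcal M_d G\cdot\psi$ is finite and continuous on $\mathcal S(\R)$, which follows because it equals $\tfrac{2}{\omega_{d-1}}\langle T,(\mathcal R_d\circ\mathcal A_1)\psi\rangle$.
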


Note that the factor  $\nicefrac{\omega_{d-1}}{2}$ is hidden
in $\mathcal R_d^\star$.

\begin{proof}
i) Firstly, if $T$ is a radial regular distribution, then the same holds true for $(\mathcal M_d \circ \mathcal A_d )T$.
We set $\rho \coloneqq \mathcal A_d T$ which is by definition an even function. Since $T$ is radial, we have by \eqref{eq:inv_ra} that $T=\mathcal R_d\rho$. Further, since  $\mathcal R_d^\star T$ as well as
$\mathcal M_d\rho$ is even,  it suffices by Lemma \ref{lemma invariand under Rad} to reduce ourselves to $\psi \in \schonerad$. 
Then $\mathcal A_1\psi = \psi$ and we obtain
\begin{align*}
    \inn{\temprot_d^\star T,\psi}
    =
    \inn{T, (\mathcal R_d \circ \mathcal A_1)\psi}
    &=
    \int_{\R^d} \rho(\|x\|) \psi(\|x\|)\,  \d x
    =
    \omega_{d-1} \int_{0}^\infty \rho(r) r^{d-1}  \psi(r) \,  \d r
    \\&=
    \frac{\omega_{d-1}}{2} \int_{\R} \rho (r)|r|^{d-1} \psi(r) \, \d r
    =\frac{\omega_{d-1}}{2} \int_{\R} \mathcal M_d\rho (r) \psi(r) \, \d r\\
    &=
    \frac{\omega_{d-1}}{2} \langle \mathcal M_d\rho, \psi\rangle
    =\frac{\omega_{d-1}}{2} \langle \mathcal (\mathcal M_d\circ \mathcal A_d)T, \psi\rangle.
\end{align*}
ii) This part follows directly from  \eqref{eq:f_dist} and part
i) applied to $T= \mathcal F_d^{-1} [\mathcal R_d F]$.
\end{proof}
Under the assumptions of Proposition~\ref{thm:walk_1}, we can apply part ii) of Proposition~\ref{lem: rot distr charac},
therefore we can consider \eqref{eq:f_dist}
as a generalization of \eqref{eq:walk_aa} 
to regular distributions.
The following theorem establishes conditions such that
the pairs of functions $(F,f)$ fulfill
the slicing property \eqref{slicing_def}.
Indeed, it includes functions $F$ for which the Fourier transform of
$\mathcal R_d F$ is not regular,
as the already mentioned function $F(x) = x^r$, $r > -1$, 
from Example~\ref{example}
or positive definite functions $\mathcal R_d F$ having  just a positive measure as Fourier transform.
To prove the theorem, we need the following lemma about a so-called approximate identity,
which can be shown following the lines 
of \cite[Thm.~5.20]{W2004}.

\begin{lemma}\label{lem: approx id}
Let $\Phi\in L^1_\textup{loc}(\mathbb R^d)$ be slowly increasing.
If $\Phi$ is continuous in $z\in \mathbb R^d$, then 
\begin{equation}\label{eq:approx-id2} \Phi(z)=\lim_{m\to \infty} \langle \Phi,\varphi_{d,m,z}\rangle, 
\end{equation}
where $\varphi_{d,m,z}(x)\coloneqq (m/\pi)^{d/2}\e^{-m\|x-z\|^2}$.
\end{lemma}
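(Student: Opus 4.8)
The plan is to treat $\varphi_{d,m,z}$ as a Gaussian approximate identity concentrating at the point $z$. First I would record the two elementary facts that $\varphi_{d,m,z}\ge 0$ and that $\int_{\R^d}\varphi_{d,m,z}(x)\,\d x = 1$ for every $m$, the latter obtained from the substitution $u=x-z$ and the standard Gaussian integral $\int_\R \e^{-ms^2}\,\d s=\sqrt{\pi/m}$. Consequently one may rewrite
\begin{equation*}
\langle \Phi,\varphi_{d,m,z}\rangle - \Phi(z) = \int_{\R^d} \big(\Phi(x)-\Phi(z)\big)\,\varphi_{d,m,z}(x)\,\d x ,
\end{equation*}
so the task reduces to showing that this integral tends to $0$ as $m\to\infty$.

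Next I would fix $\varepsilon>0$ and split the domain at a radius $\delta>0$. On the ball $\|x-z\|\le\delta$, continuity of $\Phi$ at $z$ provides a $\delta>0$ with $|\Phi(x)-\Phi(z)|<\varepsilon$ there; combined with $\varphi_{d,m,z}\ge0$ and $\int\varphi_{d,m,z}\le1$, the contribution of this ball is at most $\varepsilon$, uniformly in $m$. On the complement $\|x-z\|>\delta$ I would treat the two summands separately: the term $|\Phi(z)|\int_{\|x-z\|>\delta}\varphi_{d,m,z}(x)\,\d x$ tends to $0$ as $m\to\infty$, since the substitution $u=\sqrt m\,(x-z)$ turns it into $|\Phi(z)|\,\pi^{-d/2}\int_{\|u\|>\sqrt m\,\delta}\e^{-\|u\|^2}\,\d u$, which vanishes by dominated convergence; the remaining term $\int_{\|x-z\|>\delta}|\Phi(x)|\,\varphi_{d,m,z}(x)\,\d x$ is handled in the final step.

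For that last term the key observation is the factorization $\e^{-m\|x-z\|^2}\le \e^{-(m-1)\delta^2}\,\e^{-\|x-z\|^2}$, valid for $\|x-z\|>\delta$ and $m\ge 1$, which yields
\begin{equation*}
\int_{\|x-z\|>\delta}|\Phi(x)|\,\varphi_{d,m,z}(x)\,\d x \le (m/\pi)^{d/2}\,\e^{-(m-1)\delta^2}\int_{\R^d}|\Phi(x)|\,\e^{-\|x-z\|^2}\,\d x .
\end{equation*}
The integral on the right is finite because $\Phi\in L^1_\textup{loc}(\R^d)$ controls any bounded region while the slow-increase bound $|\Phi(x)|\le c\|x\|^k$ for $\|x\|>R$ makes $|\Phi(x)|\e^{-\|x-z\|^2}$ integrable on the tail, a polynomial times a Gaussian being integrable. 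Since $(m/\pi)^{d/2}\e^{-(m-1)\delta^2}\to0$ as $m\to\infty$, this term also vanishes in the limit. Putting the three estimates together gives $\limsup_{m\to\infty}|\langle\Phi,\varphi_{d,m,z}\rangle-\Phi(z)|\le\varepsilon$, and letting $\varepsilon\to0$ proves \eqref{eq:approx-id2}. The only genuinely delicate point — and the place where both hypotheses are actually used — is securing the finiteness of $\int_{\R^d}|\Phi(x)|\e^{-\|x-z\|^2}\,\d x$; everything else is the routine mechanics of a Gaussian mollifier, as in \cite[Thm.~5.20]{W2004}.
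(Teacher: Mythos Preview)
Your proof is correct and complete. The paper does not actually prove this lemma; it only remarks that the result ``can be shown following the lines of \cite[Thm.~5.20]{W2004}''. Your argument is precisely the standard approximate-identity computation one would expect from that reference: normalize the Gaussian, split near/far at radius $\delta$, use continuity on the ball, and kill the tail by the factorization $\e^{-m\|x-z\|^2}\le \e^{-(m-1)\delta^2}\e^{-\|x-z\|^2}$ together with the slow-increase hypothesis to guarantee $\int|\Phi(x)|\e^{-\|x-z\|^2}\,\d x<\infty$.
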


\begin{theorem}\label{theorem distributional slicing}
    Let $F\colon[0,\infty)\to\R$ such that both $\mathcal R_d F$    and 
    \begin{equation} \label{eq:distributional_slicing}
    f\coloneqq  (\mathcal F_1 \circ \mathcal R_d^\star \circ \mathcal F_d \inv) [ \mathcal R_d F]
    \end{equation}
    are slowly increasing functions. 
    If $f\in \mathcal C(\R)$ and $F$ is continuous at $\|z\|$ for some $z \in \R^d$, then 
    \begin{equation} \label{eq:slicing-regular}
        F(\|z\|)= \E_{\xi\sim\cU_\sphere}[f(|\langle z,\xi\rangle|) ].
    \end{equation}
\end{theorem}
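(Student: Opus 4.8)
The strategy is to pass from the distributional identity \eqref{eq:f_dist} (equivalently \eqref{eq:distributional_slicing}) to a pointwise one by testing against the Gaussian approximate identities $\varphi_{d,m,z}$ from Lemma~\ref{lem: approx id}, and then identify the limit as the right-hand side of \eqref{eq:slicing-regular}. First I would record that both sides of \eqref{eq:slicing-regular} make sense: the left because $F$ is assumed continuous at $\|z\|$, the right because $f\in\mathcal C(\R)$ is slowly increasing, so $\xi\mapsto f(|\langle z,\xi\rangle|)$ is bounded and continuous on $\sphere$ and the expectation is finite. The key auxiliary step is to show that the function $z\mapsto \E_{\xi\sim\cU_\sphere}[f(|\langle z,\xi\rangle|)]$ equals, as a regular distribution on $\R^d$, the distribution $\tempavg_d^\star\circ\ldots$; more precisely that $\mathcal R_d^\star$ applied to the radial regular distribution $\mathcal F_d^{-1}[\mathcal R_d F]$ has, after applying $\mathcal F_1$, the slicing distribution $f$, and that the distribution $\mathcal R_d F$ is reproduced by $f$ through the adjoint slicing operator. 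Concretely: by Proposition~\ref{lem: rot distr charac}\,i) applied to $T=\mathcal F_d^{-1}[\mathcal R_d F]$ (which is radial regular by hypothesis, with $\rho=\mathcal A_d T$), we get $\temprot_d^\star T=\tfrac{\omega_{d-1}}{2}(\mathcal M_d\circ\mathcal A_d)T=\tfrac{\omega_{d-1}}{2}\mathcal M_d\rho$ as a regular distribution on $\R$, and then $f=\mathcal F_1[\temprot_d^\star T]=\tfrac{\omega_{d-1}}{2}\mathcal F_1[\mathcal M_d\rho]$. This is exactly the one-dimensional function appearing in the computation in the proof of Proposition~\ref{thm:walk_1}.

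Next I would run the chain of identities from that proof in reverse, but now at the level of regular distributions rather than $L^1$ functions. Since $\mathcal R_d F$ is a slowly increasing (hence tempered, regular) distribution, $\mathcal R_d F=\mathcal F_d[\mathcal F_d^{-1}[\mathcal R_d F]]=\mathcal F_d[\mathcal R_d\rho]$, so for every $\varphi\in\schd$,
\begin{equation*}
\langle \mathcal R_d F,\varphi\rangle
=\langle \mathcal R_d\rho,\mathcal F_d\varphi\rangle
=\int_{\R^d}\rho(\|v\|)\,\widehat\varphi(v)\,\d v.
\end{equation*}
Writing $v=r\xi$, $r>0$, $\xi\in\sphere$, integrating first over $\sphere$, and using $\widehat\varphi(v)=\int_{\R^d}\e^{-2\pi\i\langle x,v\rangle}\varphi(x)\,\d x$, one recovers — by precisely the Fubini manipulation in the display chain of the proof of Proposition~\ref{thm:walk_1} — that
\begin{equation*}
\langle \mathcal R_d F,\varphi\rangle
=\int_{\R^d}\varphi(x)\,\E_{\xi\sim\cU_\sphere}\bigl[f(|\langle x,\xi\rangle|)\bigr]\,\d x,
\end{equation*}
i.e. $\mathcal R_d F$ coincides as a regular distribution with the (locally integrable, slowly increasing) function $x\mapsto g(x)\coloneqq\E_{\xi\sim\cU_\sphere}[f(|\langle x,\xi\rangle|)]$. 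The interchange of integrals here is legitimate because $\varphi$ is Schwartz and $\rho r^{d-1}=\tfrac12\mathcal M_d\rho$ is, after one $\mathcal F_1$, the slowly increasing function $\tfrac{2}{\omega_{d-1}}f$, so all the integrals converge absolutely against the rapidly decaying $\varphi$; this absolute-convergence bookkeeping is the main technical obstacle and should be spelled out carefully (it is where the slow-increase hypotheses on both $F$ and $f$ are used).

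Finally, having $\mathcal R_d F=g$ as regular distributions with both $F(\|\cdot\|)$ continuous at $z$ and $g$ continuous at $z$ (the latter because $f\in\mathcal C(\R)$, $f$ slowly increasing, so $g$ is continuous on all of $\R^d$ by dominated convergence on the compact sphere), I would invoke Lemma~\ref{lem: approx id}: for the Gaussian family $\varphi_{d,m,z}$,
\begin{equation*}
F(\|z\|)=\lim_{m\to\infty}\langle \mathcal R_d F,\varphi_{d,m,z}\rangle
=\lim_{m\to\infty}\langle g,\varphi_{d,m,z}\rangle
=g(z)=\E_{\xi\sim\cU_\sphere}[f(|\langle z,\xi\rangle|)],
\end{equation*}
which is \eqref{eq:slicing-regular}. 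The first and last equalities are Lemma~\ref{lem: approx id} applied to $\mathcal R_d F$ (slowly increasing, continuous at $z$) and to $g$ (slowly increasing, continuous everywhere), respectively; the middle equality is the distributional identity $\mathcal R_d F=g$ just established. I expect the Fubini/absolute-convergence step in the second paragraph to be the real work; everything else is assembling results already in the excerpt.
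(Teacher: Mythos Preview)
Your proposal contains a genuine gap. You write that Proposition~\ref{lem: rot distr charac}\,i) applies to $T=\mathcal F_d^{-1}[\mathcal R_d F]$ ``which is radial regular by hypothesis'', but this is not among the hypotheses of the theorem: only $\mathcal R_d F$ and $f$ are assumed to be slowly increasing functions, not $\mathcal F_d^{-1}[\mathcal R_d F]$. In fact, the whole point of Theorem~\ref{theorem distributional slicing} (see the discussion after Proposition~\ref{lem: rot distr charac} and Examples~\ref{example}ii), \ref{ex:Helmholtz}, and Section~\ref{sec:pos-def}) is to cover cases where $\mathcal F_d^{-1}[\mathcal R_d F]$ is \emph{not} regular --- a Riesz kernel, a measure, or worse. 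Consequently you cannot write $\mathcal F_d^{-1}[\mathcal R_d F]=\mathcal R_d\rho$ for a function $\rho$, and the polar-coordinate integral $\int_{\R^d}\rho(\|v\|)\widehat\varphi(v)\,\d v$ and the subsequent Fubini manipulation have no meaning. Your sketch of why Fubini is ``legitimate'' only addresses the $f$ side; it does not produce a function $\rho$ out of a general tempered distribution.

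The paper's proof sidesteps this entirely. It never represents $\mathcal F_d^{-1}[\mathcal R_d F]$ as a function; instead it uses the purely distributional identity $(\tempavg_d^*\circ\mathcal F_1^{-1})f=\mathcal F_d^{-1}[\mathcal R_d F]$ coming from Lemma~\ref{Lemma Rot inverse to avg} (inverting the definition of $f$), and then moves all operators onto the test function: $\langle(\tempavg_d^*\circ\mathcal F_1^{-1})f,\hat\varphi_{d,m,z}\rangle=\E_\xi[\langle f,\varphi_{1,m,\langle z,\xi\rangle}\rangle]$, using that $(\hat\varphi_{d,m,z})_\xi=\hat\varphi_{1,m,\langle z,\xi\rangle}$. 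Now $f$ is the object paired with a concrete Schwartz function, and the assumed regularity and slow increase of $f$ are exactly what is needed to apply Lemma~\ref{lem: approx id} and to justify interchanging $\lim_m$ with $\E_\xi$ via a uniform bound on $|\langle f,\varphi_{1,m,s}\rangle|$. Your final approximate-identity step is correct in spirit, but the distributional identity $\mathcal R_d F=g$ you feed into it has not been established.
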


\begin{proof}
For $m\in \N$, we use the above Schwartz function 
$\varphi_{d,m,z}$, which has the Fourier transform
$\hat \varphi_{d,m,z}(v) = \e^{-2\pi\i\langle z,v\rangle} \e^{-\pi^2\|v\|^2/m}$. 
Since $\mathcal R_d F$ is continuous in $\|z\|$ and slowly increasing, we obtain by Lemma~\ref{lem: approx id} 
that
\begin{equation*}
    F(\|z\|)
    =\lim_{m\to \infty} \langle \mathcal R_d F, \varphi_{d, m, z}\rangle
    =\lim_{m\to \infty} \langle \mathcal F_d\inv [\mathcal R_dF], \hat \varphi_{d, m, z}\rangle.
\end{equation*}
By Lemma \ref{Lemma Rot inverse to avg}, we realize that
\begin{equation*}
    (\tempavg_d^*\circ \mathcal F_1\inv )f
     =(\tempavg_d^*\circ \mathcal F_1\inv
     \circ \mathcal F_1\circ \mathcal R_d^\star  \circ \mathcal F_d\inv)[\mathcal R_d F]
    =
    (\tempavg_d^*\circ \mathcal R_d^\star) \circ \mathcal F_d\inv [\mathcal R_d F]
    =
    \mathcal F_d\inv[\mathcal R_d F],   
\end{equation*}
so that
\begin{equation*}
    F(\|z\|)
    =
    \lim_{m\to \infty} \langle (\tempavg_d ^*\circ \mathcal F_1\inv)f, \hat \varphi_{d, m, z}\rangle 
    =\lim_{m\to \infty}  \E_{\xi\sim\cU_\sphere}
    \left[\langle f, \mathcal F_1\inv [(\hat \varphi_{d,m,z})_\xi]\rangle \right],
\end{equation*}
where for any $\xi\in \sphere $ and $r\in\R$,
\begin{equation*}
(\hat \varphi_{d, m, z})_\xi(r)
    \coloneqq\hat \varphi_{d, m, z}(r \xi)
    =\e^{-2\pi\i\langle r\xi,z\rangle}\, \e^{-\pi^2|r|^2/m}
    = \hat \varphi_{1, m,\langle \xi,z\rangle }(r).
\end{equation*}
It follows that
\begin{equation} \label{star}
    F(\|z\|)= \lim_{m\to \infty}  \left[ \E_{\xi\sim\cU_\sphere}  \langle f,  \varphi_{1,m,\langle z,\xi\rangle}\rangle \right].
\end{equation}
Since $f$ is continuous slowly increasing and even, we obtain again by \eqref{eq:approx-id2} that
$$ \lim_{m\to \infty} \langle f,\varphi_{1,m,\langle z,\xi\rangle}\rangle = f(\langle z,\xi\rangle) = f(|\langle z,\xi\rangle|) 
$$
It remains to show that we can interchange the limit and integration in \eqref{star}.
Since $f$ is slowly increasing, there exist $k\in \N$, $c>0$ and $R>0$ such that $|f(r)|\leqslant c|r|^k$ for all $r\geqslant R$.
We choose $R$ large enough so that $r\leqslant 2r^2-k$ for all $r\geqslant R$ and $|f(r)|\leqslant c|r|^k$ for all $r\geqslant R-\|z\|$.\\
For any $r,s\in \R$, the convexity of $|\cdot|^k$ implies that 
\begin{equation}\label{eq:rs-bound}|r+s|^k=|\tfrac{1}{2}(2r)+\tfrac{1}{2}(2s)|^k\leqslant\tfrac{1}{2}|2r|^k+\tfrac{1}{2}|2s|^k=2^{k-1}(|r|^k+|s|^k).\end{equation}
Setting $\varphi_{d,m}\coloneqq\varphi_{d,m,0}$, we split up the integral
\begin{align*}
    |\langle f, \varphi_{1,m,s}\rangle|&
    =|\langle f(\cdot+s), \varphi_{1,m}\rangle|
    \leqslant \int_\R |f(r+s)\varphi_{1,m}(r)|\,\d r\\&
    =\int_{|r|\leqslant R} |f(r+s)|\varphi_{1,m}(r)\,\d r
    +\int_ { |r|>R } |f(r+s)|\varphi_{1,m}(r)\,\d r.   
\end{align*}
Using that $\varphi_{1,m}$ is positive and its integral is one, we estimate the first part
\begin{equation*}
    \int_{|r|\leqslant R} |f(r+s)|\varphi_{1, m}(r)\,\d r
    \leqslant \max_{|r|\leqslant R} |f(r+s)|\int_{|r|\leqslant R} \varphi_{1, m}(r)\,\d r
    \leqslant 
    \max_{|r|\leqslant R} |f(r+s)|.
\end{equation*}
For the second part, we have by \eqref{eq:rs-bound} for all $|s|\leqslant \|z\|$ that
\begin{align*}
    \int_ { |r|>R } |f(r+s)|\varphi_{1,m}(r)\,\d r&
    \leqslant c2^{k-1}\int_{|r|>R} (|r|^k+|s|^k)\varphi_{1,m}(r)\,\d r
    \\&
    \leqslant c2^{k-1}\left(|s|^k + 2\int_R^\infty r^k\varphi_{1,m}(r)\,\d r\right)\\&
    \leqslant c2^{k-1}\left(|s|^k + 2(m/\pi)^{1/2}\int_R^\infty (2mr^{k+1}-kr^{k-1}) \e^{-mr^2}\,\d r\right)
    \\&
    =c2^{k-1}\left(|s|^k+ 2(m/\pi)^{1/2}\left[-r^k\e^{-mr^2}\right]_R^\infty\right)
    \\&
    =c2^{k-1}\left(|s|^k+ 2R^k(m/\pi)^{1/2}\e^{-mR^2}\right).
\end{align*}
Due to the growth of the exponential, we can find $m_0$ such that $2R^k(m/\pi)^{1/2}\e^{-mR^2}\leqslant 1$ for all $m\geqslant m_0$. 
Now let $s=\langle z,\xi\rangle$. Then we have $|s|\leqslant \|z\|$ and for $m\geqslant m_0$ it follows
\begin{align}
   | \langle f,\varphi_{1,m,s}\rangle|&
   \leqslant \max_{|r|\leqslant R} |f(r+s)|+c2^{k-1}(|s|^{ k} + 2R^k\varphi_{1,m}(R))\notag
   \\&\leqslant \max_{|t|\leqslant R+\|z\|} |f(t)|+c2^{k-1}(\|z\|^{ k}+1)\in \mathcal O(\|z\|^k) \text{ as } \|z\|\to \infty.\label{eq:lebesgues_estimate}
\end{align}
This bound is independent of $\xi$ and $m$, therefore we can apply Lebesgue's dominated convergence theorem to \eqref{star} and finally obtain \eqref{eq:slicing-regular}.
\end{proof}

Concerning $\mathcal R_dF$, the conditions in Theorem~\ref{theorem distributional slicing}  are fulfilled if $F$ is continuous on $(0,\infty)$, slowly increasing, and $F(r)r^{d-1}$ is bounded for $r\searrow0$. 
However, the prior knowledge that \(f\) is regular and continuous is required.
It would be more natural to impose conditions solely on the function \(F\).

We will accomplish this by first proving Theorem \ref{thm:dist slicing}, which can be considered as reversed statement of Theorem~\ref{theorem distributional slicing}. 
Afterwards, we use the fractional derivatives of Appendix~\ref{app:sturm_liouv},
in particular Theorem \ref{thm:enusre_slicing},
to show in Corollary \ref{cor: dist slicing F} that the slicing formula \eqref{eq:slicing-regular} 
follows from the distributional slicing formula 
\eqref{eq:distributional_slicing} under certain smoothness assumptions only on the function $F$.

\begin{theorem}\label{thm:dist slicing}
Let $f\in \mathcal C(\R)$ be slowly increasing and even.
Then the radial distribution $\Phi\coloneqq (\mathcal F_d\circ \mathcal A_d^*\circ \mathcal F_1^{-1})[f]$ is regular, continuous, slowly increasing, and satisfies $\Phi(z)=F(\|z\|)$ for all $z\in \R^d$,
where $F$ is defined by \eqref{eq:basis_function}. In particular, $f$ can be recovered from the inversion formula \eqref{eq:distributional_slicing}.
\end{theorem}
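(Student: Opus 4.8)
The plan is to identify the distribution $\Phi$ with $\mathcal R_d F$, where $F$ is the function defined by the Abel-type integral \eqref{eq:basis_function}; once this is established, regularity, continuity, slow increase, the value $\Phi(z)=F(\|z\|)$, and the inversion formula all follow at once. First I would check that $F$ is a continuous, slowly increasing function. If $|f(r)|\le c(1+r)^k$ for all $r\ge0$, then the first formula in \eqref{eq:basis_function} gives $|F(s)|\le c_d\,c\,(1+s)^k\int_0^1(1-t^2)^{\frac{d-3}{2}}\,\d t$, and the last integral is a finite Beta integral (for $d\ge2$), so $F$ is slowly increasing; since $f$ is continuous, dominated convergence in the same integral shows that $F$ is continuous on $[0,\infty)$, with $F(0)=c_d f(0)\int_0^1(1-t^2)^{\frac{d-3}{2}}\,\d t$. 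Hence $\mathcal R_d F\colon x\mapsto F(\|x\|)$ is continuous and slowly increasing, lies in $L^1_{\mathrm{loc}}(\R^d)$, and defines a radial regular tempered distribution.

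The core of the proof is then to show $\langle\Phi,\varphi\rangle=\int_{\R^d}F(\|x\|)\varphi(x)\,\d x$ for every radial $\varphi\in\schdrad$. Indeed, $\Phi\in\tempdrad$ (since $\mathcal F_1^{-1}[f]\in\temponerad$ because $f$ is even, $\mathcal A_d^*$ maps into $\tempdrad$, and the Fourier transform preserves radiality), so by Lemma~\ref{lemma invariand under Rad} the displayed identity forces $\Phi=\mathcal R_d F$. Unwinding the definitions of $\mathcal F_d$ and $\mathcal A_d^*$ on distributions, $\langle\Phi,\varphi\rangle=\langle f,\mathcal F_1^{-1}[\mathcal A_d\mathcal F_d\varphi]\rangle$. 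Since $\varphi$ is radial, so is $\mathcal F_d\varphi$, hence $\mathcal A_d\mathcal F_d\varphi$ is just $\mathcal F_d\varphi$ restricted to a line through the origin, and by the Fourier slice theorem this equals $\mathcal F_1[g]$, where $g(t)\coloneqq\int_{\R^{d-1}}\varphi(t,x')\,\d x'$ is the projection of $\varphi$ onto the first coordinate axis, itself a Schwartz function on $\R$. Thus $\mathcal F_1^{-1}[\mathcal A_d\mathcal F_d\varphi]=g$ and $\langle\Phi,\varphi\rangle=\int_\R f(s)g(s)\,\d s$. On the other hand, Theorem~\ref{thm:slicing_b} yields $\mathcal R_d F=\E_{\xi\sim\cU_\sphere}[f(|\langle\cdot,\xi\rangle|)]$; applying Fubini (legitimate because $f$ is slowly increasing and $\varphi$ is Schwartz) and using the radiality of $\varphi$ to evaluate the spherical average at $\xi=e_1$, one obtains $\int_{\R^d}F(\|x\|)\varphi(x)\,\d x=\int_{\R^d}f(|x_1|)\varphi(x)\,\d x=\int_\R f(s)g(s)\,\d s$ as well. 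Comparing the two expressions gives $\Phi=\mathcal R_d F$, so $\Phi$ is regular, continuous, slowly increasing, and $\Phi(z)=F(\|z\|)$ for all $z\in\R^d$.

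The inversion formula is then immediate: substituting $\Phi=\mathcal R_d F$ and the definition of $\Phi$ into \eqref{eq:distributional_slicing} and cancelling with $\mathcal F_d^{-1}\circ\mathcal F_d=\mathrm{Id}$ and, by Lemma~\ref{Lemma Rot inverse to avg} applied to $\mathcal F_1^{-1}[f]\in\temponerad$, with $\mathcal R_d^\star\circ\mathcal A_d^*=\mathrm{Id}_{\temponerad}$, one is left with $(\mathcal F_1\circ\mathcal R_d^\star\circ\mathcal F_d^{-1})[\mathcal R_d F]=f$. The step I expect to be the main obstacle is the rigorous identification $\mathcal F_1^{-1}[\mathcal A_d\mathcal F_d\varphi]=g$ --- the combination of the Fourier slice theorem with the reduction to a single coordinate direction via radiality --- together with the bookkeeping needed to guarantee that each distributional operator acts on an object in the appropriate radial or even subspace, so that $\mathcal F_d=\mathcal F_d^{-1}$ and $\mathcal F_1=\mathcal F_1^{-1}$ can be invoked without further comment.
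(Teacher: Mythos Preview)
Your argument is correct and takes a genuinely different route from the paper. The paper's proof tests $\Phi$ against shifted Gaussians $\varphi_{d,m,z}$, computes $(\mathcal F_1^{-1}\circ\mathcal A_d\circ\mathcal F_d)[\varphi_{d,m,z}]$ explicitly as a spherical average of one-dimensional Gaussians, and then passes to the limit $m\to\infty$ using Lemma~\ref{lem: approx id} together with the dominated-convergence estimate \eqref{eq:lebesgues_estimate}; this yields $\Phi(z)=F(\|z\|)$ pointwise, after which a second mollification argument identifies $\Phi$ with the regular distribution $\mathcal R_dF$. Your approach is more direct: you test only against radial $\varphi\in\schdrad$, recognise $\mathcal A_d\mathcal F_d\varphi$ as the restriction of $\hat\varphi$ to a line and invoke the one-dimensional Fourier slice identity $\mathcal F_d\varphi(re_1)=\mathcal F_1[g](r)$ with $g$ the coordinate projection of $\varphi$, and then match the resulting pairing $\int_\R f\,g$ with $\int_{\R^d}F(\|\cdot\|)\varphi$ via Theorem~\ref{thm:slicing_b} and a single Fubini step. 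This avoids approximate identities and the technical majorant estimates entirely. The paper's route, on the other hand, is closer in spirit to the proof of Theorem~\ref{theorem distributional slicing} and reuses its estimate \eqref{eq:lebesgues_estimate}, which explains why it was chosen there. Your worry about the identification $\mathcal F_1^{-1}[\mathcal A_d\mathcal F_d\varphi]=g$ is unfounded: for radial $\varphi$ the averaging is trivial, $g$ is even and Schwartz, and the slice identity is the elementary computation $\mathcal F_1[g](r)=\int_{\R^d}\varphi(x)\e^{-2\pi\i rx_1}\d x=\hat\varphi(re_1)$.
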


The proof, which is similar to Theorem~\ref{theorem distributional slicing}, is given in Appendix~\ref{app:proof-new}

\begin{corollary}\label{cor: dist slicing F}
Let $d\ge 3$ and let the $\lfloor \nicefrac{d}{2}\rfloor$-th derivative of $F\in \mathcal C^{\lfloor \nicefrac{d}{2} \rfloor }([0,\infty))$ be slowly increasing. Then $f\coloneqq (\mathcal F_1^{-1}\circ \mathcal R_d^\star \circ \mathcal F_d)[\mathcal R_d F]$ is continuous and regular and satisfies the slicing relation \eqref{eq:slicing-regular}.
\end{corollary}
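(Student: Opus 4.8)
The plan is to route the argument through the fractional-calculus inversion of the Abel transform (Theorem~\ref{thm:enusre_slicing}) and the distributional recovery result Theorem~\ref{thm:dist slicing}, and then to close with Theorem~\ref{theorem distributional slicing}.

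First I would produce a \emph{classical} slicing partner of $F$. The Abel-type relation \eqref{eq:basis_function} is a Riemann--Liouville fractional integral of order $(d-1)/2$, so inverting it costs $\lfloor d/2\rfloor$ ordinary derivatives; this is precisely the regularity assumed on $F$. Applying Theorem~\ref{thm:enusre_slicing} to $F$ should yield a function $g\colon[0,\infty)\to\R$, extended evenly, satisfying \eqref{eq:basis_function} with $f$ replaced by $g$, and --- this is the point where the hypotheses are used --- $g$ is continuous because $F\in\mathcal C^{\lfloor d/2\rfloor}([0,\infty))$, while $g$ is slowly increasing because $F^{(\lfloor d/2\rfloor)}$ is slowly increasing (repeated integration also shows $F$ and all lower derivatives are slowly increasing). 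I expect the main obstacle to be exactly here: verifying that Theorem~\ref{thm:enusre_slicing} delivers a $g$ that is simultaneously continuous \emph{and} slowly increasing under these hypotheses, which is where the gain of one derivative over Theorem~\ref{thm:slicing} becomes essential.

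Next I would feed $g$ into Theorem~\ref{thm:dist slicing}: since $g$ is continuous, slowly increasing and even, the radial distribution $\Phi:=(\mathcal F_d\circ\mathcal A_d^*\circ\mathcal F_1^{-1})[g]$ is regular, continuous and slowly increasing, it satisfies $\Phi(z)=\widetilde F(\|z\|)$ with $\widetilde F$ the Abel transform \eqref{eq:basis_function} of $g$, and $g$ is recovered via $g=(\mathcal F_1\circ\mathcal R_d^\star\circ\mathcal F_d^{-1})[\Phi]$. By the previous step $\widetilde F=F$ (the Abel transform determines $F$ from $g$ uniquely), so $\Phi=\mathcal R_d F$ as regular tempered distributions. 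Because $\mathcal R_d F$ is radial one has $\mathcal F_d^{-1}[\mathcal R_d F]=\mathcal F_d[\mathcal R_d F]$, and because $\mathcal R_d^\star$ outputs an even one-dimensional distribution one has $\mathcal F_1=\mathcal F_1^{-1}$ there; hence $g=(\mathcal F_1\circ\mathcal R_d^\star\circ\mathcal F_d^{-1})[\mathcal R_d F]=(\mathcal F_1^{-1}\circ\mathcal R_d^\star\circ\mathcal F_d)[\mathcal R_d F]=f$. In particular $f=g$ is continuous and regular.

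Finally I would invoke Theorem~\ref{theorem distributional slicing} with $\mathcal R_d F=\Phi$ (a slowly increasing function), $f=g$ (continuous, slowly increasing, even) and $F$ continuous at $\|z\|$ for every $z\in\R^d$ (true since $d\ge3$ forces $\lfloor d/2\rfloor\ge1$, so $F\in\mathcal C^1$); this gives $F(\|z\|)=\E_{\xi\sim\cU_\sphere}[f(|\langle z,\xi\rangle|)]$, i.e.\ \eqref{eq:slicing-regular}. Alternatively, since $g\in L^1_{\mathrm{loc}}([0,\infty))$ satisfies \eqref{eq:basis_function}, Theorem~\ref{thm:slicing_b} already says $F$ fulfils the slicing relation \eqref{slicing_def} with slicing function $g=f$. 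Apart from Step~1, everything is bookkeeping with the operators $\mathcal R_d^\star,\mathcal A_d^*$ and the Fourier conventions.
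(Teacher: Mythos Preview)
Your proposal is correct and follows essentially the same route as the paper: invoke Theorem~\ref{thm:enusre_slicing} to obtain a continuous, slowly increasing $g$ satisfying \eqref{eq:slicing-regular}, then apply Theorem~\ref{thm:dist slicing} to identify $\Phi=\mathcal R_dF$ and hence $g=(\mathcal F_1\circ\mathcal R_d^\star\circ\mathcal F_d^{-1})[\mathcal R_dF]=f$. Your final appeal to Theorem~\ref{theorem distributional slicing} is superfluous, since the slicing relation \eqref{eq:slicing-regular} is already delivered by Theorem~\ref{thm:enusre_slicing} in the first step (as you note in your alternative ending).
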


\begin{proof}
    Under the given assumptions, Theorem \ref{thm:enusre_slicing} ensures that there is a continuous, slowly increasing function $f\in \mathcal C([0,\infty))$ that satisfies \eqref{eq:slicing-regular}. With $\Phi\coloneqq (\mathcal F_d\circ \mathcal A_d^*\circ \mathcal F_1^{-1})[f]$, Theorem \ref{thm:dist slicing} yields that $\Phi=\mathcal R_dF$ and $f=(\mathcal F_1\circ \mathcal R_d^\star \circ \mathcal F_d^{-1})[\mathcal R_d F]$.  
\end{proof}

The last corollary can be seen as a reverse version of Theorem~\ref{thm:slicing}, which says that for given $f$ of certain integrability, $F$ is $\lfloor(d-2)/2\rfloor$ times differentiable, but Corollary~\ref{cor: dist slicing F} requires slightly more regularity of~$F$.

The following two examples show applications of Theorem~\ref{theorem distributional slicing}.
For the first one, the slicing of the Riesz kernel $F(x)=|x|$ is already known, see Example~\ref{example}. However, the existing proof works in the ``opposite'' direction, i.e., it starts with the sliced kernel $f$ and then computes $F$ via \eqref{eq:basis_function}. In contrast, our method  allows to start with $F$ and compute~$f$.

\begin{example}
For the Riesz kernel $F(x)=|x|$ from Example~\ref{example},
the Fourier transform of $\mathcal R_d F$ does not exist in the classical sense, but as a tempered distribution.
We utilize $\varphi_{d,m}\coloneqq \varphi_{d,m,0}$ from Lemma \ref{lem: approx id}.
Set $f\coloneqq (\mathcal F_1 \circ \mathcal R_d^\star\circ  \mathcal F_d\inv) [\mathcal R_d F]$, see \eqref{eq:f_dist}.
Let $\psi \in \schonerad$.
Since $\mathcal R_d\hat\psi-\hat\psi(0)\hat \varphi_{d,m}\in \mathcal S(\Rd)$ as well as all its first order derivatives vanish at $0$, Wendland \cite[Thm.~8.16]{W2004} yields
 \begin{align}
  \langle f,\psi\rangle &
  =\langle \mathcal F_d\inv[\mathcal R_d F], \mathcal R_d\hat \psi \rangle 
  = \langle \mathcal F_d\inv[\mathcal R_dF], \mathcal R_d\hat \psi-\hat\psi(0)\hat \varphi_{d,m}\rangle +\hat\psi(0)\langle \mathcal F_d\inv [\mathcal R_dF],\hat\varphi_{d,m}\rangle \\&
  = \frac{-2}{\pi \omega_d}\int_\Rd \frac{\hat \psi(\|x\|)-\hat \psi(0)\hat \varphi_{d,m}(x)}{\|x\|^{d+1}}\,\d x+\hat\psi(0)\int_\Rd \|x\|\varphi_{d,m}(x)\,\d x.\label{eq:riesz rep}
  \end{align}  
The limit for $m\to \infty$ of the last term vanishes when we apply Lemma \ref{lem: approx id} to $\mathcal R_d F=\|\cdot\|$ and $z=0$, i.e.,
\begin{equation*}
    \lim_{m\to \infty} \int_\Rd \|x\|\varphi_{d, m}(x)\,\d x = 0.
\end{equation*}
Since $\hat \varphi_{d,m}(x)=\e^{-\pi^2\|x\|^2/m}=\hat \varphi_{1,m}(\|x\|)$, we obtain for the first term
\begin{align*}
    &\quad \int_\Rd \frac{\mathcal R_d\hat\psi(x)-\hat \psi(0)\hat\varphi_{d,m}(x)}{\|x\|^{d+1}}\,\d x 
    =\int_\Rd \frac{\hat\psi(\|x\|)-\hat \psi(0)\hat\varphi_{1,m}(\|x\|)}{\|x\|^{d+1}}\,\d x\\&
    = \omega_{d-1} \int_0^\infty \frac{\hat\psi(r)-\hat\psi(0)\hat \varphi_{1,m}(r)}{r^{d+1}} r^{d-1}\,\d r
    =\frac{\omega_{d-1}}{2}\int_\R \frac{\hat\psi(r)-\hat\psi(0)\hat \varphi_{1,m}(r)}{r^2}\,\d r.
\end{align*}
Then we have
\begin{equation*}
\langle f, \psi\rangle 
    = \frac{-\omega_{d-1}}{\pi \omega_d}\lim_{m\to \infty} \int_\R \frac{\hat \psi(r)-\hat\psi(0)\hat \varphi_{1,m}(r)}{r^2}\,\d r.
\end{equation*}
Employing \eqref{eq:riesz rep} backwards for dimension $1$, we finally obtain
\begin{equation*}
\langle f, \psi\rangle 
    = \frac{\omega_{d-1} \omega_1}{2\omega_d}  \langle \mathcal F_1\mathcal R_1^\star \mathcal F_1\inv [\mathcal R_1F],\psi\rangle 
    = \frac{\pi \omega_{d-1}}{\omega_d}  \int_\R |x| \psi(x) \,\d x.
\end{equation*}
Therefore, it holds
$$f(x)=\frac{\pi \omega_{d-1}}{\omega_d}|x|, \quad x\in\R.$$ 
Both $f$ and $F$ are slowly increasing and continuous functions, so the assumptions of Theorem~\ref{theorem distributional slicing} are satisfied.
\end{example}

The following example adds a new pair $(F,f)$ to the list of sliced functions,
namely
\eqref{newF} and \eqref{pairf}.

\begin{example} \label{ex:Helmholtz}
We consider the fundamental solution, also known as Green's function, of the Helmholtz operator, namely for some $k_0>0$,
\begin{equation} \label{newF} 
F(r) = \frac{\i}{4} \left(\frac{k_0}{2\pi r}\right)^{\frac{d-2}{2}} H^{(1)}_{\frac{d-2}{2}}(k_0r), \forrall r\ge0, 
\end{equation}
where $H^{(1)}_a$ is the Hankel function of the first kind and order $a$. We have 
$$
\inn{\mathcal F_d [\mathcal R_d F],\varphi}
=
\lim_{\varepsilon\searrow0} \int_{\R^d} \frac{\varphi(x)}{4\pi^2\|x\|^2 - k_0^2 - \i\varepsilon} \,\d x,
$$
see \cite{KirQueSet24}.
The significance of this example is that the forward simulation of a scattering problem can be done via the convolution with $\mathcal R_d F$, cf.\ \cite{FauKirQueSchSet23}.
By \cite{KirQueSet24}, $\mathcal R_d F$ is indeed a regular tempered distribution on $\R^d$.
However, the asymptotic form $|H_a(r)|\sim\sqrt{2/(\pi r)}$ for $r\to\infty$, see \cite[\href{https://dlmf.nist.gov/10.2}{10.2.5}]{DLMF}, shows that $\mathcal R_d F\notin L^1(\R^d)$, so we are not in the setting of Proposition~\ref{thm:walk_1}.
Let $\psi\in\schonerad$.
We obtain with transformation to polar coordinates that
\begin{align*}
\inn{\mathcal R_d^\star \mathcal F_d^{-1} [\mathcal R_d F],\psi}
=
\inn{\mathcal F_d [\mathcal R_dF],\mathcal R_d\psi}
&=
\lim_{\varepsilon\searrow0} \int_{\R^d} \frac{\psi(\|x\|)}{4\pi^2\|x\|^2 - k_0^2 - \i\varepsilon} \,\d x
\\&
=
\frac{\omega_{d-1}}{2} \lim_{\varepsilon\searrow0} \int_{\R} \frac{\psi(r) |r|^{d-1}}{4\pi^2 r^2 - k_0^2 - \i\varepsilon} \,\d r.
\end{align*}
By Theorem~\ref{theorem distributional slicing}, we have
\begin{align*}
\inn{f,\psi}
&=
\inn{\mathcal F_1 \mathcal R_d^\star \mathcal F_d^{-1} [\mathcal R_d F],\psi}
=
\inn{\mathcal R_d^\star \mathcal F_d^{-1} \mathcal A_d \mathcal P_d[F],\mathcal F_1[\psi]}
\\&
=
\frac{\omega_{d-1}}{2}
\lim_{\varepsilon\searrow0} \int_{\R} \int_{\R} \frac{ |r|^{d-1}}{4\pi^2 r^2 - k_0^2 - \i\varepsilon} \psi(s) \e^{-2\pi\i rs} \,\d s \,\d r
\\&
=
\omega_{d-1} \lim_{\varepsilon\searrow0} \int_{0}^\infty \int_{\R} \frac{ r^{d-1}}{4\pi^2 r^2 - k_0^2 - \i\varepsilon}  \psi(s) \cos(2\pi rs) \,\d s \,\d r.
\end{align*}
In particular, for $d=2$, we get
\begin{equation*}
\inn{f,\psi}
=
2\pi \lim_{\varepsilon\searrow0} \int_{0}^\infty \int_{\R} \frac{ r}{4\pi^2 r^2 - k_0^2 - \i\varepsilon}  \psi(s) \cos(2\pi rs) \,\d s \,\d r.
\end{equation*}
Noting that 
$$\frac{r}{4\pi^2r^2-k_0^2-\i\varepsilon} = \frac{-r}{k_0^2+\i\varepsilon}\vphantom{F}_1F_0\big(1;-;-\tfrac{4\pi^2}{k_0^2+\i\varepsilon} r^2\big),$$
where $F$ is the hypergeometric function, we have by \cite[8.19(19)]{Erd54b} that
\begin{equation*}
\inn{f,\psi}
=
\frac{1}{4\sqrt\pi} \lim_{\varepsilon\searrow0} \int_{0}^\infty \psi(s) {G^{2,1}_{1,3}\left(-\tfrac14 {(k_0^2+\i\varepsilon)s^2}\middle|\begin{matrix}
    0\\0,0,\frac12
\end{matrix}\right)} \,\d s,
\end{equation*}
where $G$ denotes the Meijer-G function defined by
$$
G_{p,q}^{\,m,n} \!\left( z \middle| \begin{matrix} a_1, \dots, a_p \\ b_1, \dots, b_q \end{matrix} \right) 
\coloneqq \frac{1}{2\pi\i} \int_L \frac{\prod_{j=1}^m \Gamma(b_j - s) \prod_{j=1}^n \Gamma(1 - a_j +s)} {\prod_{j=m+1}^q \Gamma(1 - b_j + s) \prod_{j=n+1}^p \Gamma(a_j - s)} \,z^s \,\d s,
$$
where $L$ is a certain loop in the complex plane,
see \cite[Sect.~9.3]{GrRy07}.
Aside from its poles, the $G$ function has a jump discontinuity along the positive real axis due to taking the main branch of $z^s$.
We obtain
\begin{equation*}
\inn{f,\psi}
=
\frac{1}{4\sqrt\pi} \lim_{\varepsilon\searrow0} \int_{0}^\infty \psi(s) \overline{G^{2,1}_{1,3}\left(\tfrac14 {(-k_0^2+\i\varepsilon)s^2}\middle|\begin{matrix} 0\\0,0,\frac12 \end{matrix}\right)} \,\d s,
\end{equation*}
and the integrand depends continuously on $\varepsilon\ge0$.
Assuming that the limit and the integral can be interchanged, this implies
\begin{equation} \label{pairf}
f(s)
=
\frac{1}{4\sqrt\pi} \overline{G^{2,1}_{1,3}\left(-\tfrac14 {k_0^2s^2}\middle|\begin{matrix} 0\\0,0,\frac12 \end{matrix}\right)}.
\end{equation}
Conversely, we can verify that $f$ indeed fulfills \eqref{eq:basis_function}  using from \cite{Pru3} the integral formula 2.24.2 and the relation 8.4.23.1, in particular for $s>0$,
\begin{align*} 
F(s) 
&= \frac{2}{\pi} \int_0^1 f(ts)(1-t^2)^{\frac{d-3}{2}}\,\d t
\\&= \frac{1}{4\pi^{3/2}} \int_0^1  \overline{G^{2,1}_{1,3}\left(-\tfrac14 {k_0^2us^2}\middle|\begin{array}{l} 0\\0,0,\frac12 \end{array}\right)} (1-u)^{-\frac{1}{2}} u^{-\frac12}\,\d u
\\&= \frac{1}{4\pi} \overline{G^{2,2}_{2,4}\left(-\tfrac14 {k_0^2s^2}\middle|\begin{array}{l} \tfrac12,0\\0,0,\frac12,0 \end{array}\right)} 
= \frac{1}{4\pi} \overline{G^{2,0}_{0,2}\left(-\tfrac14 {k_0^2s^2}\middle|\begin{array}{l} -\\0,0 \end{array}\right)} 
\\&= \frac{1}{2\pi} \overline{K_0(\i k_0s)}
= \frac{\i}{4} H^{(1)}_0(k_0s),
\end{align*}
where $K_0$ is the modified Bessel function of the second kind and the relation of the $G$ functions with different orders follows directly from its definition.
\end{example}

\subsection{Slicing of Positive Definite Functions}
\label{sec:pos-def}
Given a regular distribution $F$, we can always apply the distributional slicing operator \eqref{eq:distributional_slicing} and obtain $f=(\mathcal F_1\circ \mathcal R_d^\star\circ \mathcal F_d^{-1})[\mathcal R_dF]$, which is in general only a tempered distribution. Theorem \ref{theorem distributional slicing} requires $f$ to be a regular continuous distribution, and we do not know simple conditions on $F$ for this to be fulfilled. In this section, we will see that for any function $F$ such that $\mathcal R_d F$ is positive definite, we can ensure via Bochner's theorem that the sliced kernel function $f$ is a regular distribution and indeed also a positive definite function. To this end, we consider measures as a subspace of tempered distributions.

We denote by $\cM(\R^d)$ the set of finite Borel measures on $\R^d$ and by $\cM_+(\R^d)$ the subset of  positive measures. 
The space $\mathcal M(\R^d)$ with the total variation norm $\| \cdot \|_{\textup{TV}}$ is a Banach space. 
Actually, it can be seen as a subspace of $\mathcal S'(\R^d)$ in the following sense, see, e.g.
\cite[Sect.~4.4]{PlPoStTa23}:
by Riesz' representation theorem, it can be identified with the dual space $\mathcal C_0'(\R^d) \cong \mathcal M(\R^d)$
via the isometric isomorphism 
$\mu \mapsto T_\mu$ given by
\begin{equation} \label{eq:measure-dual}
\langle T_\mu,\varphi\rangle \coloneqq 
\int_{\R^d} \varphi \,\d \mu 
\forrall \varphi \in \mathcal C_0(\R^d).
\end{equation}
Since $\mathcal S(\R^d)$ is a dense subspace of $\mathcal (\mathcal C_0(\R^d), \| \cdot \|_\infty)$, every $\mu\in\mathcal M(\R^d)$ can be actually identified with a linear functional  $T_\mu$ on the Schwartz space, which is also continuous with respect to the convergence in $\mathcal S(\R^d)$ by
$$
|\langle T_\mu,\varphi \rangle | \le \| \mu \|_{\text{TV}} \|\varphi\|_\infty 
=\| \mu \|_{\text{TV}} \| \varphi\|_0
\forrall \varphi \in \mathcal S(\R^d)
$$
with $\| \cdot\|_0$ from \eqref{eq:S}. Thus, $T_\mu \in \mathcal S'(\R^d)$, i.e., every measure from $\cM(\R^d)$ corresponds to a
tempered distribution, but not conversely.

The Fourier transform $\mathcal F_d\colon \cM(\R^d) \to \mathcal C_b(\mathbb R^d)$ of measures is an injective, linear transform defined by
\begin{equation}\label{fourier_measure}
\mathcal F_d [ \mu]
\coloneqq
\int_{\R^d} \e^{-2\pi\i\inn{\cdot, v}} \,\d \mu(v).
\end{equation}
Note that $\mathcal F_d [\mu]$  is also known as the characteristic function of $\mu$. 
If $\mu$ is absolutely continuous with respect to the Lebesgue measure with density $\Phi \in L^1(\mathbb R^d)$,
then \eqref{fourier_measure} becomes \eqref{fourier_1}.
If $T_\mu$ is the tempered distribution associated to the measure $\mu$, then it holds for all $\varphi \in \mathcal S(\R^d)$ that
\begin{align*}
 \langle \mathcal F_d T_\mu, \varphi \rangle
 &=
  \langle  T_\mu, \mathcal F_d \varphi \rangle
  = 
  \int_{\R^d}  \mathcal F_d \varphi \, \d \mu 
  =
   \int_{\R^d} \int_{\R^d} \varphi(x) \e^{-2\pi \i \langle x,v \rangle} \, \d x \, \d \mu(v)
   \\&=
    \int_{\R^d} \varphi(x) \, \mathcal F_d[ \mu ](x) \, \d x = \langle 
    T_{\mathcal F_d[ \mu ]}, \varphi \rangle,
\end{align*}
so that
$
\mathcal F_d T_\mu = T_{\mathcal F_d[ \mu ]}
$.

The Fourier transform of positive measures is related with so-called positive definite functions.
A continuous (not necessary radial) function $\Phi\colon \R^d \to \C$ is called \emph{positive definite} 
if for all $N \in \N$, 
all pairwise distinct
$x_j \in \R ^d$, and all $\alpha_j \in \C$, $j=1,\ldots,N$, it holds
$$
\sum_{j=1}^N \sum_{k=1}^N \alpha_j \overline{\alpha_k} \Phi(x_j - x_k) \ge 0.
$$
Positive definite functions are bounded, more precisely $\|\Phi\|_\infty = \Phi(0)$.
Functions $F$ such that the radial functions $\mathcal R_d F$ are positive definite \emph{in every dimension} $d \in \N$
were characterized by Schoenberg via completely monotone functions \cite{Schoenberg1938}.
A well-known example of such a function is the Gaussian function.
Bochner's theorem \cite{Bochner1932} relates the Fourier transform of positive measures with positive definite functions.

\begin{theorem}[Bochner] \label{thm:bochner}
Any positive definite function $\Phi\colon \R^d \to \R$ is the Fourier transform of a positive measure and conversely. 
If $\Phi(0) = 1$, then it is the Fourier transform of a probability measure,
i.e., there exists $\mu \in \mathcal M_+(\R^d)$ such that
$\Phi
= \E_{v \sim \mu} \big[\e^{- 2 \pi \i \langle \cdot,v \rangle} \big].
$
\end{theorem}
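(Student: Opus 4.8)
The plan is to prove the two implications separately: the ``conversely'' direction is the classical one-line verification, while the forward direction is obtained by a Gaussian regularization of $\Phi$ followed by a passage to the limit inside $\mathcal S'(\R^d)$, reusing the machinery already set up here (in particular the identity $\mathcal F_d T_\mu = T_{\mathcal F_d[\mu]}$ and Lemma~\ref{lem: approx id}).

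\emph{Converse direction.} Suppose $\Phi = \mathcal F_d[\mu]$ with $\mu\in\cM_+(\R^d)$. Then $\Phi$ is continuous by dominated convergence (the integrand is bounded by $1\in L^1(\mu)$), and for pairwise distinct $x_1,\dots,x_N\in\R^d$ and $\alpha_1,\dots,\alpha_N\in\C$ Fubini gives
$$
\sum_{j=1}^N\sum_{k=1}^N \alpha_j\overline{\alpha_k}\,\Phi(x_j-x_k) = \int_{\R^d}\Big|\sum_{j=1}^N\alpha_j\,\e^{-2\pi\i\langle x_j,v\rangle}\Big|^2\d\mu(v)\ \ge\ 0,
$$
so $\Phi$ is positive definite; moreover $\Phi(0)=\mu(\R^d)=\|\mu\|_{\textup{TV}}$, which settles the normalization and yields $\Phi=\E_{v\sim\mu}[\e^{-2\pi\i\langle\cdot,v\rangle}]$ when $\Phi(0)=1$.

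\emph{Regularization.} Now let $\Phi$ be positive definite; recall it is real-valued, hence even, and bounded with $\|\Phi\|_\infty=\Phi(0)$. Approximating the integral by Riemann sums, the defining inequality upgrades to $\int_{\R^d}\int_{\R^d}\Phi(x-y)\,h(x)\,\overline{h(y)}\,\d x\,\d y\ge0$ for every $h\in\mathcal S(\R^d)$. For $\varepsilon>0$ I would apply this with $h(x)=\e^{2\pi\i\langle\xi,x\rangle}\e^{-\varepsilon\pi\|x\|^2/2}$; using the parallelogram identity $\|x\|^2+\|y\|^2=\tfrac12\|x+y\|^2+\tfrac12\|x-y\|^2$ and the substitution $(u,v)=(x-y,x+y)$, the double integral factorizes as $2^{-d}\big(\int_{\R^d}\e^{-\varepsilon\pi\|v\|^2/4}\d v\big)\big(\int_{\R^d}\Phi(u)\,\e^{-\varepsilon\pi\|u\|^2/4}\,\e^{2\pi\i\langle\xi,u\rangle}\,\d u\big)$, forcing the second factor to be $\ge0$ for every $\xi\in\R^d$. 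Hence, writing $\psi_\varepsilon(x):=\e^{-\varepsilon\pi\|x\|^2}$ (absorbing the harmless factor $1/4$), the function $g_\varepsilon:=\mathcal F_d[\Phi\psi_\varepsilon]$, which is continuous and bounded since $\Phi\psi_\varepsilon\in L^1(\R^d)$, is nonnegative, and $\mathcal F_d[g_\varepsilon]=\Phi\psi_\varepsilon$ by evenness.

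\emph{Passage to the limit.} To control the mass of $g_\varepsilon$, I would pair it with the Schwartz functions $\varphi_{d,m}$ from Lemma~\ref{lem: approx id}: the multiplication formula $\int\mathcal F_d[g_\varepsilon]\,\varphi_{d,m}=\int g_\varepsilon\,\mathcal F_d[\varphi_{d,m}]$ together with $\mathcal F_d[\varphi_{d,m}](\xi)=\e^{-\pi^2\|\xi\|^2/m}\uparrow 1$, Lemma~\ref{lem: approx id}, and monotone convergence give $\int_{\R^d}g_\varepsilon=\lim_{m\to\infty}\langle\Phi\psi_\varepsilon,\varphi_{d,m}\rangle=(\Phi\psi_\varepsilon)(0)=\Phi(0)$, so $g_\varepsilon\in L^1(\R^d)$ with $\|g_\varepsilon\|_1=\Phi(0)$ and $\mu_\varepsilon:=g_\varepsilon\,\d x\in\cM_+(\R^d)$ has mass $\Phi(0)$ for every $\varepsilon>0$. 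Next, for each $\varphi\in\mathcal S(\R^d)$ dominated convergence gives $\langle g_\varepsilon,\varphi\rangle=\langle\Phi\psi_\varepsilon,\mathcal F_d\varphi\rangle\to\langle\Phi,\mathcal F_d\varphi\rangle$, i.e.\ $g_\varepsilon\to T:=\mathcal F_d[\Phi]$ in $\mathcal S'(\R^d)$. Since each $g_\varepsilon\ge0$, the limit $T$ is a positive distribution, hence induced by a positive Radon measure (see, e.g., \cite{PlPoStTa23}); the bound $\langle T,\varphi\rangle=\lim_{\varepsilon\to0}\int g_\varepsilon\varphi\le\Phi(0)$ for all $\varphi\in\mathcal C^\infty_c(\R^d)$ with $0\le\varphi\le1$ forces that measure to be finite, say $T=T_\mu$ with $\mu\in\cM_+(\R^d)$. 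Finally $\mathcal F_d T=\mathcal F_d\mathcal F_d[\Phi]=\Phi$ (evenness) and $\mathcal F_d T=\mathcal F_d T_\mu=T_{\mathcal F_d[\mu]}$ by the identity noted above for measures, so $\mathcal F_d[\mu]=\Phi$; evaluating at $0$ gives $\mu(\R^d)=\Phi(0)$, which completes the normalization claim.

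\emph{Main obstacle.} The decisive, least mechanical step is the regularization: turning the abstract quadratic-form inequality into the \emph{pointwise} nonnegativity of the concrete Fourier transform $\mathcal F_d[\Phi\psi_\varepsilon]$, which rests on the Gaussian decoupling that disentangles the double integral. The remaining ingredients — the multiplication formula, Lemma~\ref{lem: approx id}, and the classical fact that a positive tempered distribution is a finite positive measure — are routine; as an alternative to the last step one could invoke tightness of $\{\mu_\varepsilon\}$ (a consequence of the continuity of $\Phi$ at the origin) together with Prokhorov's theorem, but the distributional route above keeps everything within the framework developed in this section.
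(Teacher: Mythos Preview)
The paper does not prove Bochner's theorem at all: it is stated as a classical result with a reference to \cite{Bochner1932} and then used as a black box in Theorem~\ref{thm:pos def inversion props}. So there is no ``paper's own proof'' to compare against; you have supplied a proof where the authors simply cite the literature.

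Your argument is essentially a correct rendition of one of the standard proofs (Gaussian regularization plus a distributional limit). Two small points worth tightening. First, the multiplication formula you invoke in the form $\int\mathcal F_d[g_\varepsilon]\,\varphi_{d,m}=\int g_\varepsilon\,\mathcal F_d[\varphi_{d,m}]$ presupposes $g_\varepsilon\in L^1$, which is precisely what you are about to prove; the clean way is to apply the $L^1$ multiplication formula with $f=\Phi\psi_\varepsilon\in L^1$ and $g=\mathcal F_d[\varphi_{d,m}]\in L^1$, which directly yields $\int g_\varepsilon\,\mathcal F_d[\varphi_{d,m}]=\int \Phi\psi_\varepsilon\,\varphi_{d,m}$ without circularity. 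Second, ``a positive tempered distribution is a positive Radon measure'' is the correct statement---it need not be \emph{finite} a priori---and you do handle finiteness separately via the uniform mass bound $\|g_\varepsilon\|_1=\Phi(0)$, so this is only a matter of phrasing. With these cosmetic fixes the proof stands; it is more than what the paper itself offers.
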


Using the above relations of measures and tempered distributions, we obtain the following one-to-one correspondence between positive definite radial functions and their sliced versions.

\begin{theorem}\label{thm:pos def inversion props}
    Let $F\colon[0,\infty)\to\R$  such that $\mathcal R_dF$ is positive definite. Then the function $F$ is $\lfloor\frac{d-2}{2}\rfloor$ times continuously differentiable on $(0,\infty)$.
    Moreover, 
    \begin{equation} \label{eq:slicing-measure} f = \mathcal F_1 \mathcal R_d^\star \mathcal F_d^{-1}[\mathcal R_d F]\end{equation} 
    is positive definite on $\R$ and fulfills the slicing formula \eqref{eq:slice}. 
    Conversely, for every even, positive definite function $f$ on $\R$, the radial function $\mathcal R_dF$ given by \eqref{eq:slice} is positive definite on $\R^d$.
\end{theorem}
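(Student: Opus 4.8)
The plan is to route everything through Bochner's theorem (Theorem~\ref{thm:bochner}), applied twice, and then to read off the slicing identity and the smoothness from the distributional results already established. Since $\mathcal R_d F$ is positive definite, Theorem~\ref{thm:bochner} yields $\mu\in\mathcal M_+(\R^d)$ with $\mathcal R_d F=\mathcal F_d[\mu]$; injectivity of $\mathcal F_d$ on $\mathcal M(\R^d)$ together with the radiality of $\mathcal R_d F$ forces $\mu$ to be invariant under $\mathrm O(d)$, so that $T_\mu\in\mathcal S'_{\mathrm{rad}}(\R^d)$ and $\mathcal F_d^{-1}[\mathcal R_d F]=T_\mu$ (using $\mathcal F_d T_\mu=T_{\mathcal F_d[\mu]}$ from the excerpt). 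The decisive observation is that $\mathcal R_d^\star T_\mu$ is again a \emph{finite} positive measure on $\R$: unwinding the definition \eqref{eq:Rd-star} gives $\langle\mathcal R_d^\star T_\mu,\psi\rangle=\int_{\R^d}(\mathcal A_1\psi)(\|x\|)\,\d\mu(x)$, which is nonnegative for $\psi\ge0$ and bounded by $\mu(\R^d)\,\|\psi\|_\infty$; hence Riesz' representation theorem provides $\lambda\in\mathcal M_+(\R)$ with $\mathcal R_d^\star T_\mu=T_\lambda$, and $\lambda$ is even since $\mathcal R_d^\star T_\mu$ is. Consequently $f=\mathcal F_1[\lambda]$, and a \emph{second} application of Bochner (its converse direction) shows that $f$ is continuous, bounded, even and positive definite on~$\R$.

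Next I would identify $F$ with the Abel transform of $f$. As $f\in\mathcal C(\R)$ is bounded, it is slowly increasing and even, so Theorem~\ref{thm:dist slicing} applies: $\Phi:=(\mathcal F_d\circ\mathcal A_d^*\circ\mathcal F_1^{-1})[f]$ is a regular, continuous, slowly increasing distribution with $\Phi(z)=\tilde F(\|z\|)$, where $\tilde F$ is built from $f$ by the Abel integral \eqref{eq:basis_function}. On the other hand $\mathcal F_1^{-1}[f]=\mathcal R_d^\star T_\mu$ and $\mathcal A_d^*\circ\mathcal R_d^\star$ is the identity on radial $d$-dimensional distributions by Lemma~\ref{Lemma Rot inverse to avg}, whence $\mathcal A_d^*\mathcal F_1^{-1}[f]=T_\mu=\mathcal F_d^{-1}[\mathcal R_d F]$ and therefore $\Phi=\mathcal R_d F$; comparing and using that $\mathcal R_d F$ is continuous gives $\tilde F=F$ on $(0,\infty)$. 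Since $f$ is bounded and continuous it lies in $L^1_{\mathrm{loc}}([0,\infty))$ and in $L^p_{\mathrm{loc}}([0,\infty))$ for every $p$, so Theorem~\ref{thm:slicing_b} yields $\mathcal R_d F=\mathcal R_d\tilde F=\E_{\xi\sim\cU_\sphere}[f(|\langle\cdot,\xi\rangle|)]$, i.e.\ the slicing formula \eqref{eq:slice}, and Theorem~\ref{thm:slicing} (for $d\ge3$; for $d=2$ the claim reduces to continuity of $F$, which holds because $\mathcal R_d F$ is positive definite) shows that $F=\tilde F$ is $\lfloor\nicefrac{(d-2)}{2}\rfloor$ times continuously differentiable on $(0,\infty)$. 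A direct alternative for the smoothness uses $\mathcal R_d F(x)=\int_{[0,\infty)}\Omega_d(r\|x\|)\,\d\sigma(r)$ with $\sigma$ the push-forward of $\mu$ under $x\mapsto\|x\|$ and $\Omega_d(t)=\E_{\xi\sim\cU_\sphere}[\e^{-2\pi\i t\langle\xi,e_1\rangle}]$: Bessel asymptotics give $|\Omega_d^{(k)}(t)|\le C_{d,k}(1+|t|)^{-\nicefrac{(d-1)}{2}}$, so that on compacta in $(0,\infty)$ the $k$-th $s$-derivative of $\Omega_d(rs)$ is dominated by a $\sigma$-integrable function whenever $k\le\lfloor\nicefrac{(d-2)}{2}\rfloor$, and one differentiates under the integral.

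For the converse, let $f$ be even and positive definite on $\R$ and define $F$ through \eqref{eq:slice}. Then $\mathcal R_d F$ is continuous by dominated convergence in \eqref{eq:slice} (dominating constant $f(0)$), and for pairwise distinct $x_1,\dots,x_N\in\R^d$ and $\alpha_1,\dots,\alpha_N\in\C$,
\[
\sum_{j=1}^N\sum_{k=1}^N\alpha_j\overline{\alpha_k}\,\mathcal R_d F(x_j-x_k)
=\E_{\xi\sim\cU_\sphere}\!\left[\,\sum_{j=1}^N\sum_{k=1}^N\alpha_j\overline{\alpha_k}\,f\big(\langle x_j,\xi\rangle-\langle x_k,\xi\rangle\big)\right]\ge0,
\]
since for each fixed $\xi$, after merging coincident values among $\langle x_1,\xi\rangle,\dots,\langle x_N,\xi\rangle$ and adding up the corresponding coefficients, the inner double sum is nonnegative by positive definiteness of $f$, and the expectation of a nonnegative integrand is nonnegative. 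I expect the only genuinely delicate step to be the one flagged in the first paragraph: recognizing that $\mathcal R_d^\star$ carries the positive measure $T_\mu$ to a \emph{finite} positive measure on $\R$ — finiteness being forced precisely by the boundedness of $\mathcal A_1\psi$ — so that Bochner may be invoked a second time; the rest is bookkeeping with the operator identities already available.
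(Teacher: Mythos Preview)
Your proof is correct and follows essentially the same strategy as the paper: invoke Bochner's theorem to obtain $\mu\in\mathcal M_+(\R^d)$, argue that $\mathcal R_d^\star T_\mu$ is again a finite positive measure (you are more explicit here than the paper, giving the bound $|\langle\mathcal R_d^\star T_\mu,\psi\rangle|\le\mu(\R^d)\|\psi\|_\infty$ and invoking Riesz), apply Bochner once more to conclude that $f$ is positive definite, and then feed this into the distributional slicing machinery together with Theorem~\ref{thm:slicing} for the smoothness. The one minor deviation is that the paper reads off the slicing identity directly from Theorem~\ref{theorem distributional slicing} (both $\mathcal R_dF$ and $f$ are bounded continuous, hence slowly increasing), whereas you go through Theorem~\ref{thm:dist slicing} and then identify $\Phi$ with $\mathcal R_dF$ via $\mathcal A_d^*\circ\mathcal R_d^\star=\mathrm{Id}$ from Lemma~\ref{Lemma Rot inverse to avg}; both routes are equivalent and your detour buys you nothing extra, so you could shorten the argument by citing Theorem~\ref{theorem distributional slicing} instead. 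Your alternative Bessel-asymptotics argument for smoothness and your explicit handling of coincident projections in the converse are correct add-ons not spelled out in the paper.
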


\begin{proof}
    Bochner's Theorem \ref{thm:bochner} implies that $\mathcal F_d^{-1}[\mathcal R_d F] \in \cM_+(\R^d)$.
    Since a measure $\mu\in\cM(\R^d)$ is positive if and only if $\inn{\mu,\varphi}\ge0$ for every non-negative function $\varphi\in\schd$, see \cite[Sect.~4.4]{PlPoStTa23}, also $\mathcal R_d^\star \mathcal F_d^{-1}[\mathcal R_d F]$ is a positive measure and again by Bochner's theorem $f$ is a positive definite function on $\R$.
    The slicing identity follows from Theorem~\ref{theorem distributional slicing} by identifying measures with distributions,
    since both $\mathcal R_d F$ and $f$ are continuous and bounded and are therefore slowly increasing. By Theorem \ref{thm:slicing} the function $F$ is $\lfloor\frac{d-2}{2}\rfloor$ times continuously differentiable on $(0,\infty)$.
    Analogously, if $f$ is positive definite on $\R$, then $F$ given by \eqref{eq:slice} is positive definite on $\Rd$.
\end{proof}

The above theorem shows that for any function $F$ such that $\mathcal R_d F$ is positive definite, a sliced kernel function $f$ exists.
A somewhat more intuitive reformulation of the inversion formula \eqref{eq:slicing-measure} for measures is given in the following remark.

\begin{remark}
We state the slicing relation \eqref{eq:slicing-measure} using the pushforward of the norm.
In the setting of Theorem~\ref{thm:pos def inversion props}, recall that $\mu\coloneqq \mathcal F_d^{-1}[\mathcal R_d F]$ is a positive measure on $\R^d$.
The pushforward of a measure $\mu\in\mathcal M_+(\R^d)$
by a Borel map $\iota\colon \R^d\to[0,\infty)$ is defined by
$\iota_\sharp \mu\coloneqq \mu\circ \iota^{-1}$.
For simplicity, we identify any measure $\nu\in\mathcal M(\R^d)$ with the operator $T_\nu\in\mathcal C_0'(\R^d)$ in \eqref{eq:measure-dual}.
Then we have $\langle \| \cdot \|_\sharp\mu, \varphi\rangle = \langle\mu, \varphi\circ \| \cdot \|\rangle = \langle\mu, \mathcal R_d\varphi\rangle$ for all $\varphi\in\mathcal C_0([0,\infty))$.
On the other hand, by the definitions of $\mathcal R_d^\star$ and $\mathcal A_1^*$ in \eqref{eq:Rd-star} and \eqref{eq:Ad-star}, we know that $\langle \mathcal R_d^\star\mu, \psi\rangle = \langle\mu, \mathcal R_d \mathcal A_1\psi\rangle = \langle\mathcal A_1^* \| \cdot \|_\sharp\mu, \psi\rangle$ for all $\psi\in\mathcal C_0(\R)$. Hence, the slicing relation \eqref{eq:slicing-measure} becomes 
$$
f 
= \mathcal F_1 \mathcal A_1^* \| \cdot \|_\sharp \mu 
= \mathcal F_1 \mathcal A_1^* \| \cdot \|_\sharp \mathcal F_d^{-1}[\mathcal R_d F].$$
Here $\mathcal A_1^*$ maps a measure $\nu\in\mathcal M([0,\infty))$ to the even measure $\mathcal A_1^*\nu\in\mathcal M(\R)$, which satisfies $\mathcal A_1^*\nu(B) = \frac12 (\nu(B\cap[0,\infty))+\nu((-B)\cap[0,\infty)))$ for any Borel set $B\subset\R$.
\end{remark}

For odd dimension $d$, \cite[Thm.~7]{Wposdefsmooth} shows that any radial positive definite function $F$ is $\lfloor \frac{d}{2}\rfloor$ times differentiable if $\mathcal R_d F\in L^1(\Rd)$.
In comparison, Theorem \ref{thm:pos def inversion props} gives one derivative less, but only requires $\mathcal R_d F$ being positive definite without further assumptions on $F$.

\section{Conclusions} \label{sec:conclusions}

Slicing is an attractive tool to speed up computations in kernel summations often used in machine learning. We proved inversion formulas to compute the sliced kernel $f$ for a given radial kernel $F$ via a ``dimension walk'' in Fourier space. If $\mathcal R_d F$ is the Fourier transform of an integrable function, Proposition~\ref{thm:walk_1} provides a simple formula consisting of Fourier transforms and a multiplication operator. Our main results, Theorem~\ref{theorem distributional slicing} and Corollary~\ref{cor: dist slicing F}, generalize this to tempered distributions and show that on the distributional level slicing is possible when both $F$ and $f$ are regular and continuous.
If the function $\mathcal R_dF$ is positive definite, Theorem~\ref{thm:pos def inversion props} ensures that the sliced kernel $f$ exists and is a positive definite function that can be computed by \eqref{eq:slicing-measure}.

Along the way, some statements of independent interest were shown. Theorem~\ref{thm:slicing} states that the function $F$ obtained from a given sliced kernel $f$ becomes smoother with increasing dimension $d$. Further, Corollary~\ref{cor:smooth} shows that the Fourier transform of any radial function is $\left\lfloor \nicefrac{(d-2)}{2}\right\rfloor$ times continuously differentiable on $\R^d\setminus\{0\}$.

\section*{Declarations}
\textbf{Acknowledgements:}
We want to thank Robert Beinert and Johannes Hertrich for fruitful discussions.

For open access purposes, the authors have applied a CC BY public copyright license to any author-accepted manuscript version arising from this submission. 

\textbf{Conflict of Interest:} 
On behalf of all authors, the corresponding author states that there is no conflict of interest.

\textbf{Competing Interests:}
The authors have no competing interests to declare that are relevant to the content of this article.

\textbf{Funding Information:}
This research was funded in whole or in part by the German Research Foundation (DFG): STE 571/19-1, project number 495365311, within the Austrian Science Fund (FWF) SFB 10.55776/F68 ``Tomography Across the Scales''. Furthermore, NR  gratefully acknowledges funding by the German Federal Ministry of Education and
Research BMBF 01|S20053B project SA$\ell$E.

\textbf{Author contribution:}
All three authors have contributed equally to the manuscript.

\textbf{Data Availability Statement:}
Data availability is not applicable to this article as no new data were created or analyzed in this study. 

\textbf{Research Involving Human and /or Animals:}
Not applicable because no research involving humans or animals has been conducted for this article.

\textbf{Informed Consent:}
Not applicable because no research involving humans has been conducted for this article.

\bibliographystyle{abbrv}
\bibliography{ref}

\appendix
\section{Proof of Theorem \ref{thm:slicing_b}} \label{app:1}
Let  $x\in\R^d$ with $\|x\|=r$. Denote by $U_x$ an orthogonal matrix such that $U_xx=\|x\|e_1$, 
where $e_1$ is the first unit vector.
Then, it holds
\begin{equation*}
\langle x, \xi \rangle = \langle r U_x^\tT e_1, \xi \rangle =r \langle U_x^\tT e_1, \xi \rangle = r\langle e_1,U_x\xi\rangle
\end{equation*}
and consequently 
\begin{align*}
\E_{\xi\sim \mathcal U_{\Sp^{d-1}}}[f(|\langle x,\xi\rangle|)]
&=
\E_{\xi\sim \mathcal U_{\Sp^{d-1}}}[f(r |\langle e_1,U_x\xi \rangle|)]\\
&=\E_{\xi\sim \mathcal U_{\Sp^{d-1}}}[f(r |\langle e_1,\xi \rangle|)]
=\E_{\xi\sim \mathcal U_{\Sp^{d-1}}}[f(r |\xi_1|)].
\end{align*}
We write $\xi=\xi_1 e_1+\sqrt{1-\xi_1^2}(0,\xi_{2:d})$ with $\xi_1 \in [-1,1]$ and $\xi_{2:d}\coloneqq(\xi_2,...,\xi_d)\in\Sp^{d-2}$. 
Applying \cite[(1.16)]{AtHa12}, which holds for $d \ge 3$, 
we obtain 
\begin{align*}
\E_{\xi\sim \mathcal U_{\Sp^{d-1}}}[f(|\langle x,\xi\rangle|)]
&=
\frac{1}{\omega_{d-1}}\int_{\Sp^{d-1}}f(r|\xi_1|)\,\d\Sp^{d-1}(\xi)\\
&=
\frac{1}{\omega_{d-1}}\int_{-1}^1\int_{\Sp^{d-2}}f(r|\xi_1|)\,\d\Sp^{d-2}(\xi_{2:d})(1-\xi_1^2)^{\frac{d-3}{2}}\,\d\xi_1\\
&=\frac{\omega_{d-2}}{\omega_{d-1}}\int_{-1}^1f(r|t|)(1-t^2)^{\frac{d-3}{2}}\,\d t
=c_d \int_0^1f(rt)(1-t^2)^{\frac{d-3}{2}}\,\d t.
\end{align*}
For $d=2$, the change of variables $\xi=(\cos\theta,\sin\theta)$ with $\theta\in[0,2\pi)$ implies that
$$
\int_{\mathbb S^1} f(r|\xi_1|)\d\mathbb S^{d-1}(\xi) 
= \int_{0}^{2\pi} f(r|\cos\theta|)\d\theta
= 2\int_{-1}^{1} f(r|t|) (1-t^2)^{-\frac12}\d t,
$$
therefore the above reasoning remains valid for $d=2$.\hfill\qedsymbol{\parfillskip0pt\par}

\section{Proof of Theorem \ref{thm:slicing}} \label{app:walk_a}

To prove Theorem \ref{thm:slicing} we need some technical lemmas.

\begin{lemma}\label{lemma:tech}
    For all $0<h<s$ and $0<t<s-h$, it holds
    \begin{equation*}
        \frac{1}{h}\left(\Big(1-\frac{t^2}{s^2}\Big)^\frac{1}{2}-\Big(1-\frac{t^2}{(s-h)^2}\Big)^\frac{1}{2}\right)
		  < \frac{1}{s-t}\Big(1-\frac{t^2}{s^2}\Big)^\frac{1}{2}.
    \end{equation*}
\end{lemma}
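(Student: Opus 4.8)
The plan is to recast both sides in terms of the single function $g(u)\coloneqq \sqrt{1-t^2/u^2}=\sqrt{u^2-t^2}/u$, which is a positive real number for $u>t$; by hypothesis $t<s-h<s$, so both $g(s-h)$ and $g(s)$ are of this form. The claimed inequality then reads $\frac{g(s)-g(s-h)}{h}<\frac{g(s)}{s-t}$, i.e.\ the difference quotient of $g$ over $[s-h,s]$ stays strictly below $g(s)/(s-t)$. A naive mean value or concavity estimate is too crude here, because $g'(u)=t^2/(u^2\sqrt{u^2-t^2})$ can be arbitrarily large for $u$ near $t$, so I would instead reduce everything to an elementary polynomial inequality. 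Multiplying by $h>0$ and rearranging, the statement is equivalent to
\[
g(s-h)>g(s)\,\frac{(s-h)-t}{s-t},
\]
and the factor $(s-h)-t$ is positive, which is precisely the hypothesis $t<s-h$.

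Next I would substitute $x\coloneqq s-h$ and $y\coloneqq s$, so that $0<t<x<y$, and use $\sqrt{u^2-t^2}=\sqrt{u-t}\,\sqrt{u+t}$ to rewrite the target inequality, after cancelling one factor $\sqrt{y-t}$ on the right, as
\[
\frac{\sqrt{x-t}\,\sqrt{x+t}}{x}>\frac{\sqrt{y+t}\,(x-t)}{y\,\sqrt{y-t}}.
\]
Dividing through by $\sqrt{x-t}>0$ and squaring (all quantities are positive, since $t<x<y$ forces $x\pm t,\ y\pm t>0$), this becomes $\frac{x+t}{x^2}>\frac{(x-t)(y+t)}{y^2(y-t)}$, and clearing the positive denominators turns it into
\[
(x+t)\,y^2\,(y-t)>x^2\,(x-t)\,(y+t).
\]

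The heart of the argument is then the observation that this polynomial inequality splits into two trivial comparisons: since $0<x<y$ we have $y^2>x^2$, and since
\[
(x+t)(y-t)-(x-t)(y+t)=2t(y-x)>0,
\]
both $(x+t)(y-t)$ and $(x-t)(y+t)$ are positive with the former strictly larger. Multiplying the inequality $y^2>x^2>0$ by $(x+t)(y-t)\geq(x-t)(y+t)>0$ (with at least one of the two steps strict) yields $y^2(x+t)(y-t)>x^2(x-t)(y+t)$, which is exactly the required inequality.

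Unwinding the chain — each step being reversible (multiplication by a positive quantity, squaring of positive quantities, or an affine rearrangement) — gives the lemma. I do not anticipate a real obstacle; the only points needing care are the sign bookkeeping that legitimises the squaring step and the remark that the hypothesis $0<t<s-h$ is precisely what guarantees $s-h>t$, so that $g(s-h)$, and with it $\sqrt{x-t}$, is a positive real number.
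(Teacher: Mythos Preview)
Your proof is correct. Both your argument and the paper's reduce the claim to the same polynomial inequality: with $x=s-h$ and $y=s$, one needs $(x+t)(y-t)\,y^{2} > (x-t)(y+t)\,x^{2}$. The paper reaches this by working forward from $t<s-h$ through a somewhat opaque chain of algebraic rearrangements (expanding, regrouping, and multiplying by $h$), whereas you work backward from the target, substitute $x,y$ early, and then prove the polynomial inequality by the clean two-factor split $y^{2}>x^{2}$ and $(x+t)(y-t)-(x-t)(y+t)=2t(y-x)>0$. Your route is more transparent and makes the role of the hypotheses ($t<x$ for positivity and squaring, $x<y$ for the strict inequality) immediately visible; the paper's version hides this structure inside the algebra.
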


\begin{proof}
    Since $h<s$ and $s>0$, we have $-3s+h=-2s-(s-h)<0$ and therefore
    \begin{equation*}
        t
        <s-h 
        =\frac{s-h}{2s-h}(2s-h)
        =\frac{2s^2-3sh+h^2}{2s-h}
        < \frac{2s^2}{2s-h}.
    \end{equation*}
We can multiply the inequality with $t(2s-h)$ and obtain
\begin{align*}
   0
   <  2s^2t-t^2(2s-h)
    &=t\left((s-h)^2+s^2\right)+th(2s-h)-t^2(2s-h)
	\\&
   <t\left((s-h)^2+s^2\right)+(s-h)s(2s-h)-t^2(2s-h).
\end{align*}
Multiplying the inequality with $h>0$, a straightforward calculation yields
\begin{align*}
    0&
    < ht\left((s-h)^2+s^2\right)+(s-h)s(2s-h)h -h(2s-h)t^2
		\\&
    =(s-h+t)(s-t)s^2-(s-h-t)(s+t)(s-h)^2.
\end{align*}
Further, since $s-t$, $s^2$, and $(s-h)^2$ are positive, we obtain  
\begin{align*}
    0&
    <\frac{(s-h)+t)}{(s-h)^2}-\frac{((s-h)-t)}{s-t}\, \frac{s+t}{s^2}
		\\&
    =\frac{1}{s-h-t}\Big(\frac{(s-h+t)(s-h-t)}{(s-h)^2}-\frac{(s-h-t)^2}{(s-t)^2}\frac{(s^2-t^2)}{s^2}\Big)
		\\&
    =\frac{1}{s-h-t}\left( \left(1-\frac{t^2}{(s-h)^2}\right) -\left(1-\frac{h}{s-t}\right)^2\left(1-\frac{t^2}{s^2}\right)\right).
\end{align*}
Multiplying with $s-h-t>0$, reordering and taking the square root yields
\begin{equation*}
    \Big(1-\frac{h}{s-t}\Big)\Big(1-\frac{t^2}{s^2}\Big)^\frac{1}{2}
	<\Big(1-\frac{t^2}{(s-h)^2}\Big)^\frac{1}{2}.
\end{equation*}
Finally, we rearrange the equation and divide by $h>0$ to get the assertion
\begin{equation*}
    \frac{1}{h}\left(\Big(1-\frac{t^2}{s^2}\Big)^\frac{1}{2}
		-\Big(1-\frac{t^2}{(s-h)^2}\Big)^\frac{1}{2}\right)
		< \frac{1}{s-t}\Big(1-\frac{t^2}{s^2}\Big)^\frac{1}{2}.\qedhere
\end{equation*}
\end{proof}

\begin{lemma}\label{lem:deriv} 
Let $f\in \lloc$ if $\nu \geqslant 1$ and $f\in L^p_\textup{loc}([0,\infty))$ with $p>2$ if $\nu= \nicefrac12$. 
For $s>0$, we define 
\begin{equation} \label{Inu}
    I_\nu f(s) \coloneqq \int_0^s f(t) \Big(1-\frac{t^2}{s^2}\Big)^\nu \, \d t,
\end{equation}
then it holds that
\begin{equation} \label{deriv_old}
    \frac{\d}{\d s} I_\nu f(s)=\frac{2\nu}{s^3}I_{\nu-1} g(s), \qquad g(t) \coloneqq f(t) t^2.
\end{equation}
\end{lemma}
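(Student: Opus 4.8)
The plan is to reduce both sides of \eqref{deriv_old} to integrals over the \emph{fixed} interval $[0,1]$ by the substitution $t=su$, to prove the identity first for smooth $f$ by differentiating under the integral sign followed by one integration by parts, and then to remove the smoothness by a density argument in which the two integrability hypotheses on $f$ enter exactly where they are needed.

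After substituting $t=su$ one has $I_\nu f(s)=s\int_0^1 f(su)(1-u^2)^\nu\,\d u$ and $I_{\nu-1}g(s)=s^3\int_0^1 f(su)\,u^2(1-u^2)^{\nu-1}\,\d u$, so \eqref{deriv_old} is equivalent to $\frac{\d}{\d s}\bigl[s\int_0^1 f(su)(1-u^2)^\nu\,\d u\bigr]=2\nu\int_0^1 f(su)\,u^2(1-u^2)^{\nu-1}\,\d u$. For $f\in\mathcal C^1$ the integrand $s\mapsto s\,f(su)(1-u^2)^\nu$ and its $s$-derivative are bounded uniformly in $u\in[0,1]$ on compact $s$-intervals (using $(1-u^2)^\nu\le1$ for $\nu\ge0$), so differentiation under the integral sign is legitimate and produces $\int_0^1 f(su)(1-u^2)^\nu\,\d u+\int_0^1 u(1-u^2)^\nu\,\tfrac{\d}{\d u}[f(su)]\,\d u$. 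Integrating by parts in the second term — the boundary contributions vanish, at $u=1$ because $(1-u^2)^\nu\to0$ (here $\nu>0$) and at $u=0$ because of the prefactor $u$ — and using $\tfrac{\d}{\d u}[u(1-u^2)^\nu]=(1-u^2)^\nu-2\nu u^2(1-u^2)^{\nu-1}$, the two copies of $\int_0^1 f(su)(1-u^2)^\nu\,\d u$ cancel, leaving exactly the right-hand side. When $\nu=\tfrac12$ the right-hand integral has only an integrable singularity at $u=1$, so this integration by parts stays valid after a routine limiting argument on $[0,1-\varepsilon]$.

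To pass to general $f$, fix $0<a<b$ and choose $f_n\in\mathcal C^1([0,b])$ with $f_n\to f$ in $L^1([0,b])$ if $\nu\ge1$, respectively in $L^p([0,b])$ if $\nu=\tfrac12$. Since $(1-u^2)^\nu\le1$, the substitution $t=su$ gives $\sup_{0<s\le b}|I_\nu f_n(s)-I_\nu f(s)|\le\|f_n-f\|_{L^1([0,b])}$, hence $I_\nu f_n\to I_\nu f$ pointwise. For the derivatives, $\bigl|\tfrac{2\nu}{s^3}I_{\nu-1}g_n(s)-\tfrac{2\nu}{s^3}I_{\nu-1}g(s)\bigr|=2\nu\bigl|\int_0^1(f_n(su)-f(su))u^2(1-u^2)^{\nu-1}\,\d u\bigr|$; for $\nu\ge1$ the kernel $u^2(1-u^2)^{\nu-1}$ is bounded, giving the uniform bound $\le\tfrac{2\nu}{a}\|f_n-f\|_{L^1([0,b])}$ on $[a,b]$, whereas for $\nu=\tfrac12$ one applies Hölder with conjugate exponent $p'=p/(p-1)<2$ — so that $u^2(1-u^2)^{-1/2}\in L^{p'}([0,1])$ — and bounds the $L^p([0,1])$-norm of $f_n(s\cdot)-f(s\cdot)$ by $a^{-1/p}\|f_n-f\|_{L^p([0,b])}$ via $t=su$. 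In both cases $\tfrac{\d}{\d s}I_\nu f_n\to\tfrac{2\nu}{s^3}I_{\nu-1}g$ uniformly on $[a,b]$, so the classical theorem on differentiating a sequence whose derivatives converge uniformly yields that $I_\nu f$ is differentiable on $(a,b)$ with the asserted derivative; as $a,b$ are arbitrary, \eqref{deriv_old} follows.

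I expect the main obstacle to be the borderline case $\nu=\tfrac12$: the kernel $(1-u^2)^{-1/2}$ on the right-hand side is genuinely singular, so both the integration by parts for smooth $f$ and the uniform error control in the density step must be handled with care, and it is precisely here that the assumption $p>2$ (i.e.\ $p'<2$) is indispensable, since for $p=2$ the Hölder estimate would no longer vanish. Alternatively one may avoid the substitution and differentiate $I_\nu f$ directly through the backward difference quotient; then Lemma~\ref{lemma:tech} supplies exactly the $h$-independent dominating function $\tfrac{1}{s-t}(1-t^2/s^2)^{1/2}$ needed to invoke dominated convergence, while the remaining boundary piece $\tfrac1h\int_{s-h}^s f(t)(1-t^2/s^2)^{1/2}\,\d t$ is estimated by Hölder and again tends to $0$ thanks to $p>2$.
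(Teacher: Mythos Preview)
Your proof is correct and takes a genuinely different route from the paper's. The paper works directly with difference quotients: it splits $(I_\nu f(s+h)-I_\nu f(s))/h$ into a boundary piece over $[s,s+h]$ and a bulk piece over $[0,s]$, treats right and left limits separately, and passes to the limit via dominated convergence with explicit majorants. For the left-sided limit in the delicate case $\nu=\tfrac12$, the dominating function is furnished precisely by Lemma~\ref{lemma:tech}, which you mention only as an alternative. By contrast, your substitution $t=su$ freezes the integration domain, the integration-by-parts identity for smooth $f$ makes the formula transparent, and the density step with uniform convergence of derivatives cleanly transfers the result. Your approach is shorter and bypasses Lemma~\ref{lemma:tech} altogether; the paper's approach is more hands-on but yields the derivative directly without an approximation layer. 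Both arguments use the hypothesis $p>2$ in the same place --- to ensure $(1-u^2)^{-1/2}\in L^{p'}([0,1])$ (equivalently $(s-t)^{-q/2}$ integrable with $q=p'<2$) in the H\"older estimate controlling the singular kernel on the right-hand side.
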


\begin{proof}
We show that
$$
\lim_{h \to 0} \frac{I_\nu f(s+h) - I_\nu f(s)}{h} = \frac{2\nu}{s^3}I_{\nu-1}g(s).
$$
1. First, we consider $h >0$, i.e., the right-sided limit
\begin{align} \label{up}
     \lim_{h \searrow 0} \frac{I_\nu f(s+h)-I_\nu f(s)}{h}
		&=
    \lim_{h\searrow 0} \frac{1}{h}\int _s^{s+h} f(t)\Big(1-\frac{t^2}{(s+h)^2}\Big)^\nu \, \d t 
		\\
		& \quad +\lim_{h\searrow 0} \int_0^sf(t)\frac{1}{h}\left(\Big(1-\frac{t^2}{(s+h)^2}\Big)^\nu-\Big(1-\frac{t^2}{s^2}\Big)^\nu\right) \, \d t.
\end{align}	
We show that the first summand is zero, while the second one equals ${2\nu}s^{-3}I_{\nu-1}g(s)$.

1.1. Concerning the first limit, we have for $h>0$ and $s<t<s+h$ that$$
\left|1-\frac{t^2}{(s+h)^2}\right|
\leqslant 1-\frac{s^2}{(s+h)^2}
=h\frac{2s+h}{(s+h)^2},
$$
and further by the monotony of the power function, for $0<h<1$, that
\begin{align*}
    \left|\frac{1}{h}\int_s^{s+h} f(t) \left(1-\tfrac{t^2}{(s+h)^2}\right)^\nu\,\d t \right| 
    &
    \leqslant 
		\frac{1}{h}\int_s^{s+h} |f(t)|\, \left(h\frac{2s+h}{(s+h)^2}\right)^\nu\,\d t
	\\
	&
    \leqslant 
		\left( \frac{2s+1}{s^2}\right)^\nu h^{\nu-1} \int_s^{s+h} |f(t)| \, \d t .
\end{align*}	
If $\nu\geqslant 1$ and $f\in \lloc$, then $\nu-1\geqslant 0$ and we  can estimate 
\begin{equation*}
    \left(\frac{2s+1}{s^2}\right)^\nu h^{\nu-1} \int_s^{s+h} |f(t)| \, \d t 
		\leqslant 
		\left(\frac{2s+1}{s^2}\right)^\nu  \int_s^{s+h} |f(t)| \, \d t\xrightarrow{h\searrow 0} 0.
\end{equation*}
If $\nu=\nicefrac12$, we assumed that $f \in L^{p}_{\loc}([0,\infty))$ with $p>2$ and thus $f\in L^{2}_{\loc}([0,\infty))\subset L^{p}_{\loc}([0,\infty))$. Then we get by 
H\"older's inequality with 
\begin{align*}
\left(\frac{2s+1}{s^2}\right)^\nu h^{\nu-1} \int_s^{s+h} |f(t)|\, \d t 
&\leqslant \left(\frac{2s+1}{s^2}\right)^\frac{1}{2} h^{-\frac{1}{2}} 
\left(\int_s^{s+h} |f(t)|^2 \, \d t\right)^\frac{1}{2}\left(\int_s^{s+h}  \,\d t\right)^\frac{1}{2}\\&
=\left(\frac{2s+1}{s^2}\right)^\frac{1}{2}\left( \int_s^{s+h} |f(t)|^2 \, \d t \right)^\frac{1}{2}\xrightarrow{h\searrow 0} 0.
\end{align*}

1.2. Concerning the second limit in \eqref{up}, we use that 
\begin{equation}
    \frac{\d}{\d s}\Big( 1-\frac{t^2}{s^2}\Big)^\nu 
		=
    \frac{ 2t^{2} \nu}{ s^{3}} \Big(1 - \frac{t^{2}}{s^{2}} \Big)^{\nu - 1}.
\end{equation}
If $\nu\geqslant 1$ and $f\in \lloc$, then the mean value theorem implies
\begin{align*}
    &\quad h\inv \left| \Big(1-\tfrac{t^2}{(s+h)^2}\Big)^\nu-\Big(1-\tfrac{t^2}{s^2}\Big)^\nu\right|
    \leqslant 
    \sup_{\xi\in (0,h)} \left|2t^2\nu\Big( 1-\frac{t^2}{(s+\xi)^2}\Big)^{\nu-1}\frac{1}{(s+\xi)^3}\right|
    \\&
    \leqslant \sup_{\xi \in (0,h)} 2t^2\nu\Big( 1+\frac{t^2}{(s+\xi)^2}\Big)^{\nu-1}\frac{1}{(s+\xi)^3}
    \leqslant  2t^2\nu\Big( 1+\frac{s^2}{s^2}\Big)^{\nu-1}\frac{1}{s^3}
    = \nu2^\nu \frac{t^2}{s^3}
    \leqslant \nu 2^\nu s\inv .
\end{align*}
Hence $t\mapsto f(t)\nu 2^\nu s\inv$ is an $h$ independent integrable majorant and 
Lebesgue's dominated convergence theorem gives
\begin{equation}\label{eq:int_above}
    \lim_{h\searrow 0}\int_0^{s} f(t)\frac{1}{h}\left(\left(1-\tfrac{t^2}{(s+h)^2}\right)^\nu-
    \left(1-\tfrac{t^2}{s^2}\right)^\nu\right) \, \d t 
    = 
    \frac{2\nu}{s^3}\int_0^s f(t)t^2 \left(1-\tfrac{t^2}{s^2}\right)^{\nu-1} \, \d t.
\end{equation}
If $\nu = \nicefrac{1}{2}$ and $f\in L^{p}_{\loc}([0,\infty))$ with $p>2$, then 
\begin{align*}
    \frac1h \left| \left(1-\tfrac{t^2}{(s+h)^2}\right)^{\frac12}-\Big(1-\tfrac{t^2}{s^2}\Big)^{\frac12}\right|
    &\leqslant 
	\sup_{\xi\in (0,h)} \left|\frac{t^2}{(s+\xi)^3} \Big( 1-\frac{t^2}{(s+\xi)^2}\Big)^{-\frac12}\right|
	&
    \leqslant 
    \frac{t^2}{s^3} \Big( 1-\frac{t^2}{s^2}\Big)^{-\frac{1}{2}} .
\end{align*}
Denote by $q = (1-p^{-1})^{-1}$ the Hölder conjugate to $p$. Since $p>2$ it holds $q<2$ and also 
\begin{equation*}
    \Big( 1-\frac{t^2}{s^2}\Big)^{-\frac{1}{2}q}
    =\left(\frac{s+t}{s^2}\right)^{-\frac{q}{2}} (s-t)^{-\frac{q}{2}}.
\end{equation*}
For $t\in (0,s)$, both ${t^2}s^{-3}$ and $((s+t) s^{-2})^{-{q}/{2}}$ are bounded, and $(s-t)^{-{q}/{2}}$ is integrable on $[0,s]$. Therefore, $t\mapsto {t^2}s^{-3} ((s+t) s^{-2})^{-{q}/{2}} (s-t)^{-q/2} f(t)$ is an integrable majorant.
Hence, Lebesgue's dominated convergence theorem yields \eqref{eq:int_above}.

2. Next, we deal with the left-sided limit
\begin{align*}
    \lim_{h\nearrow 0} \frac{I_\nu f(s+h)-I_\nu f(s)}{h}
    &=\lim_{h\searrow 0} \frac{I_\nu f(s)-I_\nu f(s-h)}{h}\\&
    =\lim_{h\searrow 0}\int_0^{s} \frac{f(t)}{h}
	\left(\left(1-\tfrac{t^2}{s^2}\right)^\nu-\left(1-\tfrac{t^2}{(s-h)^2}\right)^\nu\one_{[0,s-h]}(t)\right)\, \d t.
\end{align*}	
Define the functions $m_h,m\colon (0,s)\to \R$ by 
\begin{equation*}
m_h(t)\coloneqq \frac{1}{h}\left(\left(1-\tfrac{t^2}{s^2}\right)^\nu-\left(1-\tfrac{t^2}{(s-h)^2}\right)^\nu\one_{[0,s-h]}(t)\right)
\quad \text{ and } \quad
m(t)\coloneqq \frac{\d}{\d s}\left(1-\frac{t^2}{s^2}\right)^\nu.
\end{equation*}
Note that for every $t\in (0,s)$ we have $\lim_{h\to 0} m_h(t)=m(t)$.

2.1. Let $0<t<s-h$. We choose $0<h<\nicefrac{s}{2}$.
If $\nu\geqslant 1$ and $f\in \lloc$, then the mean value theorem implies
\begin{align*}
    |m_h(t)|
	&= \frac1h \left| \Big(1-\tfrac{t^2}{(s-h)^2}\Big)^\nu-\Big(1-\tfrac{t^2}{s^2}\Big)^\nu\right|
    \leqslant \sup_{\xi\in (0,h)} \left|2\nu\Big( 1-\frac{t^2}{(s-\xi)^2}\Big)^{\nu-1}\frac{t^2}{(s-\xi)^3}\right|
		\\&
    \leqslant \sup_{\xi \in (0,h)}2\nu\Big( 1+\frac{t^2}{(s-\xi)^2}\Big)^{\nu-1}\frac{t^2}{(s-\nicefrac{s}{2})^3}
    \leqslant 2\nu\Big( 1+\frac{s^2}{(s-\nicefrac{s}{2})^2}\Big)^{\nu-1}\frac{t^2}{(\nicefrac{s}{2})^3}
		\\&
    \leqslant 16\nu \frac{t^2}{s^3}5^{\nu-1}
    \leqslant 16\cdot 5^{\nu-1} \nu s\inv ,
\end{align*}
which is bounded independently of $t$ and $s$.
Hence, 
Lebesgue's dominated convergence theorem implies 
$$\lim_{h\nearrow 0} \frac{I_\nu(f)(s+h)-I_\nu(s)}{h} 
= \frac{2\nu}{s^3}\int_0^s f(t)t^2 \left(1-\tfrac{t^2}{s^2}\right)^{\nu-1} \, \d t.$$
For $\nu=\nicefrac{1}{2}$ and $f\in L^p_\textup{loc}([0,\infty))$ with $p>2$ and Hölder conjugate $q<2$ to $p$, we apply Lemma \ref{lemma:tech} to obtain
\begin{equation} \label{majo}
|m_h(t)|
\leqslant
   w(t) \coloneqq \frac{1}{s-t}\Big(1-\frac{t^2}{s^2}\Big)^{\frac12}
   = \frac1s \Big( \frac{s+t}{s-t}\Big)^\frac{1}{2},
\end{equation}
which is $q$ integrable.
The claim follows again by using the Hölder inequality and the dominated convergence.
2.2. Let $s-h\leqslant t<s$ and $0<h<s$. For $\nu\geqslant1$, we see that $m_h$ is integrable.
For $\nu=1/2$, we have $s-t\leqslant h$ and consequently 
\begin{equation*}
    |m_h(t)|
	=
	\frac{1}{h}\Big(1-\frac{t^2}{s^2}\Big)^\frac{1}{2}
	\leqslant 
	\frac{1}{s-t}\Big(1-\frac{t^2}{s^2}\Big)^\frac{1}{2} = w(t),
\end{equation*}
see \eqref{majo}.
Finally, we use Lebesgue's dominated convergence theorem again.
\end{proof}

\noindent
\textbf{Proof of Theorem \ref{thm:slicing}}.
Since $s\mapsto s^{-1}$ is smooth on $(0,\infty)$, the differentiability of $F$ in \eqref{eq:basis_function}
depends only on the integral $I_{\nicefrac{(d-3)}{2}} f$ defined in \eqref{Inu}.
By Lemma \ref{lem:deriv}, we have 
for $f\in \lloc$ if $d \ge 5$ as well as for $f\in L^p_\textup{loc}([0,\infty))$ with $p>2$ if $\nu = \nicefrac12$ and 
 $g_k(t) \coloneqq f(t)t^{2k}$
that
\begin{equation} \label{deriv}
    \frac{\d}{\d s} I_\nu f (s)=\frac{2\nu}{s^3} I_{\nu-1} g_1 (s).
\end{equation}
Set $\nu \coloneqq \nicefrac{(d-3)}{2}$. We show that for each $n=0,\ldots,\lfloor \nu \rfloor$, 
there exist smooth functions $r_0,\ldots, r_n$ on $(0,\infty)$ such that
\begin{equation}\label{eq:hderiv}
    \frac{\d^n}{\d s^n} F(s)
    =
    \frac{\d^n}{\d s^n} \left(\frac{c_d}{s}I_{\nu}(s)\right) 
    =\sum_{k=0}^n r_k(s) I_{\nu-k} g_k(s).
\end{equation}
For $n=0$, we obtain that $r_0 = \nicefrac{c_d}{s}$.
Assume the assertion holds for $n < \lfloor \nu \rfloor$. 
For $k=0,\ldots, n$, we have $g_k \in L^p_\textup{loc}([0,\infty))$ with $p>2$ if $\nu=\nicefrac{1}{2}$ and $p=1$ if $\nu\geqslant 1$.
Also $\nu-k\geqslant \lfloor \nu \rfloor-n\geqslant 1$, so that
\eqref{deriv} yields
\begin{align}
 \frac{\d^{n+1}}{\d s^{n+1}} F(s)
  &=\sum_{k=0}^n \left( r_k'(s) I_{\nu-k}g_k (s) + r_k(s) \frac{2\nu}{s^3}I_{\nu-k-1}g_{k+1}(s) \right)\\
 &=\sum_{k=0}^{n+1} \left( r_k'(s) + r_{k-1} (s) \frac{2\nu}{s^3} \right) I_{\nu-k}g_k(s) , \quad r_{n+1}' = r_{-1} \coloneqq 0. 
\label{deriv+}
\end{align}
Hence $F$ is $\lfloor \nu\rfloor$ times differentiable. Moreover, the parameter integrals $I_k$, $k=0,\ldots,\lfloor \nu\rfloor$ 
are  absolutely continuous, which follows from Lemma~\ref{lem:deriv} for $k\ge1$ and from the definition of $I_0$. Hence, also the $\lfloor \nu \rfloor$-th derivative of $F$ is absolutely continuous.

If $d$ is odd, it holds $\lfloor \nu \rfloor = \lfloor \nicefrac{(d-2)}{2} \rfloor$ and we are done.
If $d$ is even, then $\lfloor \nu \rfloor = \nicefrac{(d-4)}{2}$ and $\nu - \lfloor \nu \rfloor = \nicefrac{1}{2}$.
For $f \in L_\textup{loc}^p([0,\infty))$ and $p>2$, we obtain by \eqref{deriv+} and Lemma~\ref{lem:deriv} that the $\lfloor \nu \rfloor+1$ th derivative of $F$
also exits and is absolutely continuous.

To show that the result is tight, consider the function $f(t)\coloneqq 0$ for $t\in[0,1]$ and $f(t)\coloneqq -2t$ for $t\in(1,\infty)$. Then $f\in L^\infty_\textup{loc}([0,\infty))$. For $s\le 1$, we get $F(s)=0$. For $s>1$, we have
\begin{align*}
 \frac{F(s)}{c_d}&
 =\int_0^1f(ts)(1-t^2)^\frac{d-3}{2}\d t
 =\int_{s^{-1}}^1 -2ts (1-t^2)^\frac{d-3}{2}\d t
 =\left[ \frac{2s}{d-1}(1-t^2)^{\frac{d-1}{2}}\right]_{s^{-1}}^1\\&
 = -\frac{2 s}{d-1}\left(1-\tfrac{1}{s^2}\right)^\frac{d-1}{2}
 =- \frac{2s}{d-1}\left(\tfrac{s+1}{s^2}\right)^\frac{d-1}{2} (s-1)^\frac{d-1}{2}\eqqcolon   g(s)(s-1)^\frac{d-1}{2},
\end{align*}
where the function $g$ is analytic and positive around $1$.
If $d$ is odd and ${m}=\lfloor\nicefrac{d}{2}\rfloor=\nicefrac{(d-1)}{2}$, then we obtain by Cauchy's integral formula that 
\begin{equation*}
\left.\frac{\d^{m}}{\d s^{m}}(g(s)(s-1)^{m})\right|_{s=1}=\frac{{m}!}{2\pi \i}\oint\limits_{|z-1|=1} \frac{g(z)(z-1)^{m}}{(z-1)^{{m}+1}} \, \d z
=\frac{{m}!}{2\pi \i}\oint\limits_{|z-1|=1}
\frac{g(z)}{z-1} \, \d z
={m}! g(1)
\end{equation*}
is non-zero.
Hence, we conclude
$$\lim_{s\searrow 1} F^{(m)}(s)=c_d {m}!g(1)\neq 0 = \lim_{s\nearrow 1} F^{(m)}(s).$$
Therefore, $F$ is not ${m}$ times continuously differentiable at $1$.

If $d$ is even, we assume by contradiction that $F$ is $\lfloor \nicefrac{d}{2}\rfloor=\nicefrac{d}{2}$ times continuously differentiable at $1$. Then the function 
$ \nicefrac{F(s)}{g(s)}$
would be $\nicefrac{d}{2}$ times continuously differentiable at $1$, since $g$ is analytic and positive at $1$. We obtain a contradiction as
$$ (s-1)^\frac{d-1}{2} = \frac{F(s)}{c_dg(s)}$$
is not $\nicefrac{d}2$ times differentiable in $1$.
This finishes the proof. \hfill \qedsymbol{\parfillskip0pt\par}

\section{Proof of Theorem \ref{prop def of avg}} \label{app:avg}
To prove the theorem, we need several auxiliary lemmata.

\begin{lemma}\label{Lemma: V operator continous}
For $\psi\in \schone$ with $\psi(0)=0$, the function 
\begin{equation}\label{eq:V}
V[\psi](x)\coloneqq 
\begin{cases}
\psi(x)/x &\text{ if }x\neq 0,\\
\psi'(0) &\text{ if }x= 0.
\end{cases}
\end{equation} 
is in $\schone$ and the respective map $V\colon \{\psi\in \schone\mid \psi(0)=0\}\to \schone$ is continuous with $\|V[\psi]\|_n\leqslant a_n\|\psi\|_{n+1}$, where
\begin{equation}\label{eq:an}
    a_n\coloneqq \max_{m=0,\ldots, n} \left\{\max\left\{ \sum_{l=0}^m \frac{2^n}{m-l+1}\binom{m}{l}, \sum_{l=0}^m \frac{m!}{l!}\right\}\right\}\geqslant 1.
\end{equation}
\end{lemma}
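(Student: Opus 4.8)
The plan is to reduce the whole statement to an integral representation of $V[\psi]$. Since $\psi(0)=0$, the fundamental theorem of calculus gives $\psi(x)=\int_0^x\psi'(u)\,\d u=x\int_0^1\psi'(tx)\,\d t$ for every $x\in\R$, so that
$$V[\psi](x)=\int_0^1\psi'(tx)\,\d t\qquad\text{for all }x\in\R,$$
the right-hand side at $x=0$ being exactly $\psi'(0)$. As $\psi\in\schone$, for $x$ in a compact set the functions $t\mapsto t^m\psi^{(m+1)}(tx)$ are bounded by the constant $\|\psi^{(m+1)}\|_\infty$, hence differentiation under the integral sign is justified and yields $V[\psi]\in\cc^\infty(\R)$ with
$$V[\psi]^{(m)}(x)=\int_0^1 t^m\,\psi^{(m+1)}(tx)\,\d t,\qquad m\in\NN .$$
This already takes care of the (apparent) singularity of $\psi(x)/x$ at the origin, which is the only genuinely delicate point.

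It then remains to bound the seminorms. Fix $n\in\N$ and $m\in\{0,\dots,n\}$ and estimate $(1+|x|)^n|V[\psi]^{(m)}(x)|$ separately on $|x|\ge1$ and on $|x|<1$. On $|x|\ge1$ I would use the Leibniz rule for the product $\psi(x)\cdot x^{-1}$, valid since $x\ne0$,
$$V[\psi]^{(m)}(x)=\sum_{l=0}^m\binom{m}{l}\psi^{(l)}(x)\,\frac{(-1)^{m-l}(m-l)!}{x^{m-l+1}},$$
and bound $|x|^{-(m-l+1)}\le1$ together with $(1+|x|)^n|\psi^{(l)}(x)|\le\|\psi\|_n\le\|\psi\|_{n+1}$ (legitimate because $l\le m\le n$). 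This gives $(1+|x|)^n|V[\psi]^{(m)}(x)|\le\sum_{l=0}^m\binom{m}{l}(m-l)!\,\|\psi\|_{n+1}=\sum_{l=0}^m\tfrac{m!}{l!}\,\|\psi\|_{n+1}$, the second term inside the maximum defining $a_n$ in \eqref{eq:an}. On $|x|<1$ I would instead use the integral formula for $V[\psi]^{(m)}$, expand $t^m=(1-(1-t))^m=\sum_{l=0}^m\binom{m}{l}(-1)^{m-l}(1-t)^{m-l}$, and estimate term by term using $|\psi^{(m+1)}(tx)|\le\|\psi\|_{n+1}$ (as $m+1\le n+1$), $\int_0^1(1-t)^{m-l}\,\d t=\tfrac1{m-l+1}$, and $(1+|x|)^n\le2^n$; this yields $(1+|x|)^n|V[\psi]^{(m)}(x)|\le 2^n\sum_{l=0}^m\tfrac1{m-l+1}\binom{m}{l}\,\|\psi\|_{n+1}$, the first term inside the maximum. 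Taking the larger of the two regional bounds and then the maximum over $m=0,\dots,n$ gives exactly $\|V[\psi]\|_n\le a_n\|\psi\|_{n+1}<\infty$ for every $n$, so $V[\psi]\in\schone$; since $V$ is clearly linear, the same inequality is the asserted continuity.

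The main obstacle is not the estimate but the behaviour at $x=0$: the Leibniz expression $\sum_l\binom{m}{l}\psi^{(l)}(x)(x^{-1})^{(m-l)}$ is a sum of terms with negative powers of $x$, and one must argue that these singular parts cancel so that $V[\psi]$ is in fact smooth there with the derivative claimed above. I would resolve this through the integral representation (equivalently, by expanding each $\psi^{(l)}$ in a Taylor polynomial of degree $m-l$ at $0$ with integral remainder, which makes the polynomial contributions sum to zero and leaves precisely $\int_0^1 t^m\psi^{(m+1)}(tx)\,\d t$, since $\sum_{l=0}^m\binom ml(-1)^{m-l}(1-t)^{m-l}=t^m$). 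A minor point worth flagging: using the binomial expansion of $t^m$ instead of the cruder bound $\int_0^1 t^m\,\d t=\tfrac1{m+1}$ is exactly what forces the two regional bounds into the common Leibniz-type shape and reproduces the constant $a_n$ as written in \eqref{eq:an}.
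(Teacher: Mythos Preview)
Your proof is correct and obtains exactly the constant $a_n$ of \eqref{eq:an}; the overall two-region strategy (Leibniz on $|x|\ge1$, Taylor-type remainder on $|x|<1$) coincides with the paper's. The genuine difference is how you handle smoothness at the origin and the small-$x$ estimate. The paper first establishes by induction on $m$, using L'Hospital at each step, that $V[\psi]^{(m)}$ is continuous and equals the Leibniz sum $x^{-m-1}\sum_{l=0}^m\psi^{(l)}(x)x^l(-1)^{l+m}m!/l!$ on $\R\setminus\{0\}$; it then Taylor-expands each $\psi^{(l)}$ with Lagrange remainder and shows the polynomial parts cancel (via $\psi(0)=0$ and a binomial identity), arriving at the Lagrange form $V[\psi]^{(m)}(x)=\sum_{l=0}^m\psi^{(m+1)}(\xi_l(x))\tfrac{(-1)^{l+m}}{m-l+1}\binom ml$, which it uses on $[-1,1]$. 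Your integral representation $V[\psi](x)=\int_0^1\psi'(tx)\,\d t$ short-circuits both steps: it gives $V[\psi]\in\cc^\infty(\R)$ and the formula $V[\psi]^{(m)}(x)=\int_0^1 t^m\psi^{(m+1)}(tx)\,\d t$ in one stroke, and the binomial expansion of $t^m$ is precisely the integral-remainder version of the paper's Lagrange form. What this buys you is brevity and a transparent reason why the singular Leibniz terms cancel; what the paper's longer route buys is an explicit closed form for $V[\psi]^{(m)}$ on $\R\setminus\{0\}$ obtained without integrals. It is also worth noting, as you do, that the crude bound $\int_0^1 t^m\,\d t=1/(m+1)$ would give a strictly smaller constant than the paper's $\sum_l\binom ml/(m-l+1)$; the binomial expansion is used only to match $a_n$ as stated.
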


\begin{proof}
Since $\psi(0)=0$, we see that $V[\psi]$ is continuous.
We note that if a function $\phi\in \cc(\R)$ is differentiable on $\R\setminus\{0\}$ and $ \lim_{x\to 0} \phi'(x)$ exists, then it is differentiable in~$0$.
\emph{Part 1:} We show by induction that for $m\geqslant 1$ the $m$-th derivative of $V[\psi]$ is continuous and can be represented as  
\begin{equation}\label{eq:m th derivative V}
    \frac{\dd^m}{\dd x^m} V[\psi](x)=x^{-m-1} \sum_{l=0}^m \psi^{(l)}(x)x^l (-1)^{l+m}\frac{m!}{l!} \forrall x\in \R\setminus\{0\}.
\end{equation}

For $m=1$ we have for $x\neq 0$
\begin{equation*}
    \frac{\dd}{\dd x} V[\psi](x)
    =\frac{\dd}{\dd x}\frac{\psi(x)}{x}
    =\frac{\psi'(x)x-\psi(x)}{x^2}
    =x^{-m- 1} \sum_{l=0}^1 \psi^{(l)}(x)x^l(-1)^{l+1}\frac{1!}{l!}.
\end{equation*}
By L'Hospital it holds
\begin{align*}
    \lim_{x\to 0} \frac{\dd}{\dd x}V[\psi](x)
    =\lim_{x\to 0} \frac{\psi'(x)x-\psi(x)}{x^2}
    =\lim_{x\to 0}\frac{\psi''(x)x}{2x}
    =\frac{\psi^{(2)}(0)}{2}.
\end{align*}
Since $V[\psi]$ is continuous on $\R$ and $\lim_{x\to 0} V[\psi]'(x)$ is finite, we have $V[\psi]'(0)=\psi^{(2)}(0)/2$. In particular, the derivative is continuous.

Now assume, that \eqref{eq:m th derivative V} holds for all derivatives less or equal $m$ and all derivatives less or equal $m$ are continuous.
We can compute the derivative 
\begin{align*}
&\quad \frac{\dd^{m+1}}{\dd x^{m+1}}V[\psi](x)
=\frac{\dd}{\dd x} \left(x^{-m-1} \sum_{l=0}^m \psi^{(l)}(x)x^l (-1)^{l+m}\frac{m!}{l!}\right)\\&
=x^{-m-1}\frac{\dd}{\dd x}\left( \sum_{l=0}^m \psi^{(l)}(x)x^l (-1)^{l+m}\frac{m!}{l!}\right)
- (m+1)x^{-m-2} \sum_{l=0}^m \psi^{(l)}(x)x^l (-1)^{l+m}\frac{m!}{l!}.
\end{align*}
With the calculation
\begin{align}
   &\quad  \frac{\dd}{\dd x}\sum_{l=0}^m \psi^{(l)}(x)x^l (-1)^{l+m}\frac{m!}{l!}
   \\&
   =\sum_{l=0}^m \psi^{(l+1)}(x)x^l(-1)^{l+m}\frac{m!}{l!}+\sum_{l=1}^m \psi^{(l)}(x)x^{l-1}(-1)^{l+m}\frac{m!}{(l-1)!}\notag\\&
   =\sum_{l=0}^m \psi^{(l+1)}(x)x^l(-1)^{l+m}\frac{m!}{l!}+\sum_{l=0}^{m-1} \psi^{(l+1)}(x)x^{l}(-1)^{l+1+m}\frac{m!}{l!}\notag\\&
   =\psi^{(m+1)}(x)x^m(-1)^{2m}\frac{m!}{m!}
   =\psi^{(m+1)}(x)x^m ,\label{eq: derivative of sum f l+1 xl}
\end{align}
we obtain that
\begin{align*}
\frac{\dd^{m+1}}{\dd x^{m+1}}V[\psi](x)
&=x^{-m-1} \, \psi^{(m+1)}(x) x^{m} -x^{-m-2} \sum_{l=0}^m \psi^{(l)}(x)x^l (-1)^{l+m}\frac{(m+1)!}{l!}\\&
=x^{-m-2} \sum_{l=0}^{m+1} \psi^{(l)}(x)x^l (-1)^{l+(m+1)}\frac{(m+1)!}{l!},
\end{align*}
which shows \eqref{eq:m th derivative V}.
Since $\psi$ is a Schwartz function, it is clear that $\frac{\dd^{m+1}}{\dd x^{m+1}}V[\psi](x)$ is continuous on $\R\setminus\{0\}$. Now we use again L'Hospital and \eqref{eq: derivative of sum f l+1 xl} to obtain
\begin{align*}
&\quad \lim_{x\to 0} \frac{\dd^{m+1}}{\dd x^{m+1}}V[\psi](x)
= \lim_{x\to 0} \frac{ \sum_{l=0}^{m+1} \psi^{(l)}(x)x^l (-1)^{l+(m+1)}\frac{(m+1)!}{l!}}{x^{m+2}}\\&
=\lim_{x\to 0} \frac{ \frac{\dd}{\dd x}\sum_{l=0}^{m+1} \psi^{(l)}(x)x^l (-1)^{l+(m+1)}\frac{(m+1)!}{l!}}{\frac{\dd}{\dd x}x^{(m+1)+1}}
=\lim_{x\to 0} \frac{\psi^{(m+2)}(x)x^{m+1}}{(m+1)x^{m+1}}
=\frac{\psi^{(m+2)}(0)}{m+1}.
\end{align*}
This yields that $V[\psi]\in \cc^{m+1}(\R)$ and this induction is finished.
\emph{Part 2:}
Next we show that $V[\psi]$ is a Schwartz function and that $V$ is continuous. Let $x\in\R$ and $m\in\N$ be fixed. We use the Taylor expansion with the Lagrange reminder,
\begin{equation*}
    \psi^{(l)}(x)=\sum_{k=l}^{m} \frac{\psi^{(k)}(0)}{(k-l)!}x^{k-l} +\frac{\psi^{(m+1)}(\xi_{l}(x))}{(m-l+1)!}x^{m-l+1}  \quad \text{for some} \quad |\xi_l(x)|<|x|.
\end{equation*}
With the representation \eqref{eq:m th derivative V}, we see that
\begin{align*}
    &\quad x^{m+1}\frac{\dd^m}{\dd x^m}V[\psi](x)
    = \sum_{l=0}^m \psi^{(l)}(x)x^l (-1)^{l+m}\frac{m!}{l!}\\&
    =x^{m+1}\sum_{l=0}^m \frac{\psi^{(m+1)}(\xi_l(x))}{(m-l+1)!}(-1)^{l+m}\frac{m!}{l!} +\sum_{l=0}^m \sum_{k=l}^{m} \frac{\psi^{(k)}(0)}{(k-l)!}x^{k}(-1)^{l+m}\frac{m!}{l!}\\&
    =x^{m+1} \sum_{l=0}^m \frac{\psi^{(m+1)}(\xi_l(x))}{m-l+1}(-1)^{l+m} \binom{m}{l}   +   \sum_{k=0}^m x^{k}\psi^{(k)}(0)\sum_{l=0}^k \frac{(-1)^{l+m}}{(k-l)!}\frac{m!}{l!}.
\end{align*}
If $k\geqslant 1$, we have by the binomial theorem
\begin{equation*}
    \sum_{l=0}^k \frac{(-1)^{l+m}}{(k-l)!}\frac{m!}{l!}
    =(-1)^m \frac{m!}{k!}\sum_{l=0}^k (-1)^l \binom{k}{l}
    =0.
\end{equation*}
Since $\psi(0)=0$, only the first sum remains and we have 
\begin{equation}\label{eq: dm/dxm V expension with lagrange reminder}
\frac{\dd^m}{\dd x^m}V[\psi](x) =\sum_{l=0}^m \psi^{(m+1)}(\xi_l(x)) \frac{(-1)^{l+m}}{m-l+1} \binom{m}{l} \quad \text{ with } \quad |\xi_l(x)|\leqslant |x|.
\end{equation}
Fixing some natural number $m\leqslant n$, we have for $x\in [-1,1] \setminus\{0\}$ using \eqref{eq: dm/dxm V expension with lagrange reminder}, that
\begin{align*}
   &\quad  \left|(1+|x|)^n \frac{\dd^m}{\dd x^m}V[\psi](x)\right|
   =\left| (1+|x|)^n \sum_{l=0}^m \psi^{(m+1)}(\xi_l(x)) \frac{(-1)^{l+m}}{m-l+1} \binom{m}{l} \right|\\&
    \leqslant (1+|x|)^n \sup_{y\in [-1,1]} \psi^{(m+1)}(y) \sum_{l=0}^m\frac{1}{m-l+1}\binom{m}{l}
    \\&
    \leqslant 2^n \|\psi\|_{n+1} \sum_{l=0}^m \frac{1}{m-l+1}\binom{m}{l}\ \leqslant a_n.
\end{align*}
On the other hand, if $|x|>1$, we have by \eqref{eq:m th derivative V}
\begin{equation*}
 \left|(1+|x|)^n \frac{\dd^m}{\dd x^m}V[\psi](x)\right|
 \leqslant (1+|x|)^{n} \sum_{l=0}^m \left|\psi^{(l)}(x)\right|\frac{m!}{l!}
 \leqslant a_n\|\psi\|_{n+1}.
\end{equation*}
Since $V[\psi]$ is smooth all its derives are continuous, we have
\begin{equation*}
    \|V[\psi]\|_n
    =\sup_{m=0,\ldots, n} \sup_{x\in \R\setminus\{0\}}  \left|(1+|x|)^n \frac{\dd^m}{\dd x^m}V[\psi](x)\right|
    \leqslant a_n\|\psi\|_{n+1}<\infty.
\end{equation*}
Thus $V[\psi]$ is a Schwartz function and the operator $V$ is continuous.
\end{proof}

\begin{lemma}\label{lemma: W operator continuous}
  The operator 
  \begin{equation} \label{eq:W}
    W\colon \schonerad\to \schonerad,\quad W\coloneqq V\circ \tfrac{\dd}{\dd x},
  \end{equation}
  or more precisely $W[\psi](x)=V[\psi'](x)=\psi'(x)/x$ if $x\neq 0$ and $W[\psi](0)=\psi''(0)$ is linear and continuous with $\|W[\psi]\|_m\leqslant a_m\|\psi\|_{m+2}$ for all $m\in \N$ and all $\psi\in \schonerad$.
\end{lemma}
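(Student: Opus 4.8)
The plan is to derive Lemma~\ref{lemma: W operator continuous} directly from Lemma~\ref{Lemma: V operator continous} together with the elementary fact that differentiation is a continuous linear operator on the Schwartz space. First I would record that for $\psi\in\schone$ the derivative $\psi'$ again lies in $\schone$ and satisfies $\|\psi'\|_m\leqslant\|\psi\|_{m+1}$ for every $m\in\N$, which is immediate from the definition \eqref{eq:S} of the seminorms. If in addition $\psi$ is even, then $\psi'$ is odd, hence $\psi'(0)=0$, so $\psi'$ belongs to the domain $\{\phi\in\schone\mid\phi(0)=0\}$ of the operator $V$. This legitimizes the composition $W=V\circ\tfrac{\dd}{\dd x}$ on $\schonerad$, and the explicit description $W[\psi](x)=\psi'(x)/x$ for $x\neq0$ and $W[\psi](0)=(\psi')'(0)=\psi''(0)$ is exactly \eqref{eq:V} applied to $\phi=\psi'$.

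Next I would verify that $W$ maps into $\schonerad$. Since $\psi$ is even, $\psi'$ is odd, so $x\mapsto\psi'(x)/x$ is even on $\R\setminus\{0\}$, and by continuity of $V[\psi']$ (guaranteed by Lemma~\ref{Lemma: V operator continous}) the value at the origin is consistent with this symmetry; together with $V[\psi']\in\schone$, this yields $W[\psi]\in\schonerad$. Linearity of $W$ is clear, being the composition of the linear maps $\tfrac{\dd}{\dd x}$ and $V$ (both linearities are immediate from their defining formulas).

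Finally, the norm estimate follows by chaining the two bounds: by Lemma~\ref{Lemma: V operator continous} we have $\|W[\psi]\|_m=\|V[\psi']\|_m\leqslant a_m\|\psi'\|_{m+1}$, and by the derivative bound $\|\psi'\|_{m+1}\leqslant\|\psi\|_{m+2}$, so $\|W[\psi]\|_m\leqslant a_m\|\psi\|_{m+2}$ for all $m\in\N$ and all $\psi\in\schonerad$. Continuity of $W$ is an immediate consequence, exactly as continuity of $V$ was deduced from its seminorm estimate in the proof of Lemma~\ref{Lemma: V operator continous}. There is no real obstacle here; the only point requiring care is the observation that $\psi'(0)=0$ for even $\psi$, since this is what makes Lemma~\ref{Lemma: V operator continous} applicable.
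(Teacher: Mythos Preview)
Your proof is correct and follows essentially the same approach as the paper: you observe that $\psi$ even implies $\psi'$ odd and hence $\psi'(0)=0$, apply Lemma~\ref{Lemma: V operator continous} to $\psi'$, and chain the estimate $\|V[\psi']\|_m\leqslant a_m\|\psi'\|_{m+1}\leqslant a_m\|\psi\|_{m+2}$. The paper's argument is identical, just more terse.
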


\begin{proof}
If $\psi\in \schonerad$, then $\psi$ is even and $\psi'$ is odd. Therefore, $\psi'(0)=0$ and $V[\psi']$ is well-defined by Lemma \ref{Lemma: V operator continous}. The function $W[\psi](x)=\psi'(x)/x$ is again even, so that $W$ is well-defined and we can estimate the $m$ norm of $W[\psi]$ by Lemma \ref{Lemma: V operator continous} as
\begin{equation*}
    \|W[\psi]\|_m=\|V(\psi')\|_m\leqslant a_m \|\psi'\|_{m+1}
    \leqslant a_m \|\psi\|_{m+2}.\qedhere
\end{equation*}
\end{proof}

\textbf{Proof of Theorem \ref{prop def of avg} i).}
The directional derivative of $\psi\in\schonerad$ in direction $\xi\in \sphere$ is
\begin{equation*}
    \partial_\xi (\psi\circ \norm_2)(0)
    =\lim_{t\to 0} \frac{\psi(\|t\xi\|)-\psi(\|0\|)}{t}
    =\lim_{t\to 0} \frac{\psi(|t|)-\psi(0)}{t}
    =\psi^\prime(0)=0,
\end{equation*}
and, for $x\neq0$, by the chain rule,
\begin{equation*}
    \partial_\xi(\psi\circ \norm_2)(x)
    =
    \psi'(\|x\|) \inn{\frac{x}{\|x\|},\xi}.
\end{equation*}
Recalling the definition of $V$ in \eqref{eq:V}, we have the representation 
\begin{equation}\label{eq:rep of partial deriv of f circ norm}
    \partial_\xi(\psi\circ\norm_2)(x)=\langle x,\xi\rangle V[\psi](\|x\|)\forrall x\in \R^d.
\end{equation}
In the following, we show inductively that for $\alpha\in \N^d$ there are polynomials $p_{\alpha,1},\ldots, p_{\alpha,|\alpha|}$ of degree $\leqslant k$ such that for every even, smooth function $\psi\colon \R\to \R$ it holds
\begin{equation}\label{eq:Da W}
    D^\alpha(\psi\circ \norm_2)(x)=\sum_{k=1}^{|\alpha|} p_{\alpha,k}(x)\cdot W^k[\psi](\|x\|) \forrall x\in \Rd,
\end{equation}
where $W^k=W\circ W\circ\ldots\circ W$ exactly $k$ times, see \eqref{eq:W}.
For $|\alpha|=1$, we write $\alpha=e_l$ and choose $p_{\alpha,1}(x)=x_l$. By \eqref{eq:rep of partial deriv of f circ norm}, it holds for all $x\in \Rd$ that
\begin{equation*}
    D^\alpha (\psi\circ\norm_2)(x)
    =x_k\frac{\psi'(\|x\|)}{\|x\|}=
    p_{\alpha,1}(x)W[\psi](\|x\|).
\end{equation*}
Now assume \eqref{eq:Da W} holds for $|\alpha|<n$ and choose $\beta\in \N^d$ with $|\beta|=n$. Find some nonzero entry $l$ of $\beta$ and define $\alpha\coloneqq \beta-e_l$.
Since $|\alpha|<n$, we can apply the induction hypothesis and obtain with Lemma \ref{lemma: W operator continuous}
\begin{align*}
    D^\beta(\psi\circ\norm_2)(x)
    &=\partial_{e_l}(D^\alpha(\psi\circ \norm_2))(x)
    =\partial_{e_l} \sum_{k=1}^{|\alpha|} p_{\alpha,k}(x)W^k[\psi](\|x\|)\\&
    =\sum_{k=1}^{|\alpha|} \left( \partial_{e_l} p_{\alpha,k}(x)W^k[\psi](\|x\|)+p_{\alpha,k}(x)x_l\frac{W^k[\psi]'(\|x\|)}{\|x\|} \right)\\&
    = \sum_{k=1}^{|\alpha|} \partial_{e_l} p_{\alpha,k}(x)W^k[\psi](\|x\|)+\sum_{k=2}^{|\alpha|+1} p_{\alpha,k-1}(x)x_lW^k[\psi](\|x\|)\\&
    =\sum_{k=1}^{|\beta|} p_{\beta, k}(x)W^k[\psi](\|x\|),
\end{align*}
where we set $p_{\beta,1}(x)\coloneqq \partial_{e_l} p_{\alpha,1}(x)$, $p_{\beta,|\beta|}(x)\coloneqq p_{\alpha,|\alpha|}(x)x_l$, and 
$
p_{\beta,k}(x)\coloneqq \partial_{e_l} p_{\alpha,k}(x)+p_{\alpha,k-1}(x)x_l
$ for $k=2,\ldots, |\beta|-1.$
This shows \eqref{eq:Da W}.

Let $n\in \N$. Since $\deg p_{\alpha,k}\leqslant k$ for all $\alpha\in \N^d$ we can find $c_n>0$ such that
\begin{equation*}
    |p_{\alpha, k}(x)|\leqslant c_n(1+\|x\|)^n \forrall |\alpha|\leqslant n, \: k=1,\ldots, |\alpha|.
\end{equation*}
For any $x\in \Rd$, we have by the continuity of $W$ in Corollary \ref{lemma: W operator continuous}
\begin{align*}
&\quad \bigl\|\psi\circ\norm_2\bigr\|_n
=\sup_{|\alpha|\leqslant n} \left\| (1+\|\cdot\|)^n D^\alpha(\psi\circ \norm_2)\right\|_\infty\\&
\leqslant \sup_{|\alpha|\leqslant n} \left\| (1+\|\cdot\|)^n \sum_{k=1}^{|\alpha|} p_{\alpha,k}(x)W^k[\psi]\circ\norm\right\|_\infty
\\&
\leqslant c_n \sup_{m=0,\ldots, n} \sum_{k=1}^m \left\|(1+|\cdot|)^{2n} W^k[\psi]\right\|_{\infty}
\\&
\leqslant c_n\sum_{k=1}^n \|W^k[\psi]\|_{2n}
\leqslant c_n \sum_{k=1}^n \prod_{j=1}^k a_j \|\psi\|_{2n+2k}
\leqslant {n} c_n \prod_{j=1}^n a_j \|\psi\|_{4n}.
\end{align*}
Setting $b_n\coloneqq {n} c_n \prod_{j=1}^n a_j$ finishes the proof.
\hfill\qedsymbol{\parfillskip0pt\par}

\begin{lemma}\label{lemma rotated Schwartz function remains Schwartz}
    Let $\varphi\in \schd$ be a Schwartz function and $Q\in \mathrm{O}(d)$, then $\varphi\circ Q\in \schd$ and $\|\varphi\circ Q\|_m\leqslant d^m\|\varphi\|_m $ for all $m\in\N$.
\end{lemma}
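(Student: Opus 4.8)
The plan is to combine the iterated chain rule with two elementary properties of an orthogonal matrix $Q$: it preserves the Euclidean norm, so $\|Qx\|=\|x\|$ for all $x\in\Rd$, and all of its entries satisfy $|Q_{jk}|\le 1$, since the columns of $Q$ are unit vectors. Smoothness of $\varphi\circ Q$ is immediate because $x\mapsto Qx$ is linear and $\varphi\in\cc^\infty(\Rd)$, so it suffices to prove the seminorm bound $\|\varphi\circ Q\|_m\le d^m\|\varphi\|_m$; this automatically yields $\|\varphi\circ Q\|_m<\infty$ for every $m$, hence $\varphi\circ Q\in\schd$.

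First I would record the first-order identity $\partial_j(\varphi\circ Q)(x)=\sum_{k=1}^d Q_{kj}(\partial_k\varphi)(Qx)$ and iterate it. For a multi-index $\beta$ with $|\beta|=n$, listing the differentiation directions with multiplicity as $j_1,\dots,j_n$ (each value $j$ occurring $\beta_j$ times), a straightforward induction on $n$ gives
\[
D^\beta(\varphi\circ Q)(x)=\sum_{k_1,\dots,k_n=1}^d\Big(\prod_{l=1}^n Q_{k_l j_l}\Big)\,(\partial_{k_1}\cdots\partial_{k_n}\varphi)(Qx),
\]
a sum of $d^n$ terms in which each coefficient $\prod_l Q_{k_l j_l}$ has modulus at most $1$ and each $\partial_{k_1}\cdots\partial_{k_n}\varphi$ equals $D^\gamma\varphi$ for some $\gamma$ with $|\gamma|=n$. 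Consequently $|D^\beta(\varphi\circ Q)(x)|\le d^{|\beta|}\max_{|\gamma|=|\beta|}|(D^\gamma\varphi)(Qx)|$.

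Finally I would multiply by $(1+\|x\|)^m$, use $\|x\|=\|Qx\|$ to rewrite the weight as $(1+\|Qx\|)^m$, and observe that for $|\beta|\le m$ one has $d^{|\beta|}\le d^m$, while $y=Qx$ ranges over all of $\Rd$ as $x$ does; this yields
\[
(1+\|x\|)^m\,|D^\beta(\varphi\circ Q)(x)|\le d^m\max_{|\gamma|\le m}\ \sup_{y\in\Rd}(1+\|y\|)^m|D^\gamma\varphi(y)|=d^m\|\varphi\|_m.
\]
Taking the supremum over $x$ and the maximum over $|\beta|\le m$ gives $\|\varphi\circ Q\|_m\le d^m\|\varphi\|_m$, and the lemma follows. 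There is no genuine obstacle here; the only point that requires a little care is the combinatorial bookkeeping in the iterated chain rule — verifying that $n$ successive differentiations produce exactly $d^n$ terms whose $Q$-coefficients are products of $n$ matrix entries — which the induction above handles cleanly.
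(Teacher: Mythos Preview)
Your proof is correct and follows essentially the same approach as the paper: both apply the chain rule to obtain $|D^\beta(\varphi\circ Q)(x)|\le d^{|\beta|}\max_{|\gamma|=|\beta|}|(D^\gamma\varphi)(Qx)|$ via the bound $|Q_{kj}|\le 1$, and then use $\|Qx\|=\|x\|$ to transfer the weight $(1+\|x\|)^m$. The only cosmetic difference is that you write out the iterated chain rule as an explicit $d^n$-term sum, whereas the paper bounds a single partial derivative and then argues inductively.
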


\begin{proof}
For $k\in[d]$ and $z\in\Rd$, the chain rule implies 
\begin{align*}
    |\partial_{e_k}(\varphi\circ Q)(z)|&
    =|\langle D(\varphi\circ Q)(z), e_k\rangle |
    =|\langle \nabla \varphi(Q(z))\cdot Q, e_k\rangle |
    =|\langle (\nabla \varphi \circ Q)(z), Q_k\rangle|\\&
    \leqslant 
    \sum_{j=1}^d  |\partial_{e_j} \varphi(Qz)|\cdot |Q_{k,j}|
    \leqslant 
    \sum_{j=1}^d|\partial_{e_j} \varphi(Qz)|
    \leqslant d\cdot \max_{j=1,\ldots,d} |\partial_{e_j} \varphi(Qz)|.
\end{align*}
For $\beta\in \N^d$ we obtain inductively
\begin{equation*}
|D^\beta (\varphi\circ Q)(z)|
\leqslant d^{|\beta|} \max_{|\beta'|=|\beta|} |D^{\beta'}\varphi(Qz)|.
\end{equation*}
Finally,  we have for $m\in \N$ that
\begin{align*}
\|\varphi\circ Q\|_m&
=\max_{|\beta|\leqslant m} \sup_{x\in\R^d} | (1+\|c\|)^m D^\beta [\varphi\circ Q](x)|\\&
\leqslant \max_{|\beta|\leqslant m} d^{|\beta|} \sup_{x\in\R^d}  \|(1+\|x\|)^m (D^\beta \varphi)(Qx)|
=d^m \|\varphi\|_m.
\end{align*}
Therefore $\varphi\circ Q\in \schd$.
\end{proof}

\begin{lemma} \label{lem:phi_xi}
    Let $\varphi\in \schd$ and $\xi \in \sphere$. Then 
    $$\varphi_\xi\colon \R\to \R,\quad r\mapsto \varphi(r \xi)$$ 
    is a one-dimensional Schwartz function and $\|\varphi_\xi\|_m\leqslant d^m \|\varphi\|_m$ for all $m\in\NN$.
\end{lemma}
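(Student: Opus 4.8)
The plan is to reduce the claim to Lemma~\ref{lemma rotated Schwartz function remains Schwartz} by absorbing the direction $\xi$ into an orthogonal change of variables. Fix $\xi\in\sphere$. Completing $\xi$ to an orthonormal basis of $\R^d$ yields a matrix $Q\in\mathrm O(d)$ with $Qe_1=\xi$. Setting $\psi\coloneqq\varphi\circ Q\in\cc^\infty(\R^d)$, Lemma~\ref{lemma rotated Schwartz function remains Schwartz} gives $\psi\in\schd$ together with $\|\psi\|_m\leqslant d^m\|\varphi\|_m$ for all $m\in\NN$. Since
\[
\varphi_\xi(r)=\varphi(r\xi)=\varphi\bigl(Q(re_1)\bigr)=\psi(re_1)\qquad (r\in\R),
\]
the function $\varphi_\xi$ is smooth as the composition of the linear map $r\mapsto re_1$ with $\psi$.

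Next I would estimate the one-dimensional seminorms of $\varphi_\xi$. By the chain rule, for every $k\in\NN$ we have $\tfrac{\d^k}{\d r^k}\varphi_\xi(r)=\partial_{e_1}^k\psi(re_1)=(D^{ke_1}\psi)(re_1)$, where $ke_1\in\NN^d$ is the multi-index whose first entry equals $k$ and whose remaining entries vanish, so that $|ke_1|=k$. Using $|r|=\|re_1\|$, it follows for all $m\in\NN$ and all $k\leqslant m$ that
\[
(1+|r|)^m\left|\tfrac{\d^k}{\d r^k}\varphi_\xi(r)\right|=\bigl(1+\|re_1\|\bigr)^m\,\bigl|(D^{ke_1}\psi)(re_1)\bigr|\leqslant\|\psi\|_m .
\]
Taking the supremum over $r\in\R$ and the maximum over $k\leqslant m$ gives $\|\varphi_\xi\|_m\leqslant\|\psi\|_m\leqslant d^m\|\varphi\|_m$, which is finite for every $m\in\NN$. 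Hence $\varphi_\xi\in\schone$ and the asserted inequality holds.

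I do not expect a genuine obstacle here; the only point worth spelling out is the existence of $Q\in\mathrm O(d)$ with $Qe_1=\xi$, which is immediate from Gram--Schmidt. If one prefers to avoid the orthogonal reduction, the same bound can be read off directly from the multinomial form of the chain rule, namely $\tfrac{\d^k}{\d r^k}\varphi_\xi(r)=\sum_{|\beta|=k}\binom{k}{\beta}\xi^\beta(D^\beta\varphi)(r\xi)$, combined with $|\xi^\beta|\leqslant 1$ for $\xi\in\sphere$ and $\sum_{|\beta|=k}\binom{k}{\beta}=d^k$; this again yields $(1+|r|)^m|\varphi_\xi^{(k)}(r)|\leqslant d^k\|\varphi\|_m\leqslant d^m\|\varphi\|_m$ for $k\leqslant m$, so that $\|\varphi_\xi\|_m\leqslant d^m\|\varphi\|_m$.
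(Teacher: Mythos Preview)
Your proof is correct and follows essentially the same route as the paper: reduce to the case $\xi=e_1$ via an orthogonal matrix $Q$ with $Qe_1=\xi$, apply Lemma~\ref{lemma rotated Schwartz function remains Schwartz} to $\varphi\circ Q$, and then observe that restriction to the $e_1$-axis does not increase the seminorms. The alternative multinomial-chain-rule argument you sketch at the end is also valid but not used in the paper.
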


\begin{proof}
First, let $\xi=e_1$ and $n\in\NN$. we see that $\varphi_{e_1}$ is smooth with
$$D^n \varphi_{e_1}(r) = \left(\tfrac{\dd}{\dd r}\right)^n \varphi(r e_1) = D^\beta \varphi(r e_1),$$
where $\beta = n e_1 \in \NN^d$.
Therefore, we obtain for $m\in\NN$ that
\begin{equation*}
    \|\varphi_{e_1}\|_m
    =\sup_{x_1\in \R}\left|(1+|x_1|)^m D^\beta \varphi(x_1e_1)\right|\
    \leqslant \sup_{x\in \Rd} \left|(1+\|x\|)^m D^\beta \varphi(x)\right|
    \leqslant \|\varphi\|_m.
\end{equation*}
For arbitrary $\xi\in \sphere$, we take some $Q\in  \mathrm{O}(d)$ such that $Qe_1=\xi$, then $\varphi_\xi =(\varphi\circ Q)_{e_1}$. By Lemma \ref{lemma rotated Schwartz function remains Schwartz}, we obtain $\varphi_\xi\in \schone$ and
\begin{equation*}
    \|\varphi_\xi\|_m
    =\|(\varphi\circ Q)_{e_1}\|_m
    \leqslant \|\varphi\circ Q\|_m
    \leqslant d^m \|\varphi\|_m.\qedhere
\end{equation*}
\end{proof}

\textbf{Proof of Theorem \ref{prop def of avg} ii).}
Since $\sphere $ is compact and $\varphi$ is continuous, $\mathcal A_d\varphi(r)$ is well-defined.
For distinct $r,s\in \R$, we can find $t\in \R$ between $r$ and $s$ by the mean value theorem, such that
\begin{equation*}
    \left|\frac{\varphi_\xi(r)-\varphi_\xi(s)}{r-s}\right|
    = | \varphi_\xi'(t)|
    \leqslant \|\varphi_\xi\|_1
    \leqslant d\|\varphi\|_1.
\end{equation*}
Therefore we get by Lebesgue's dominated convergence theorem that
\begin{align*}
    \frac{\dd}{\dd r}\mathcal A_d\varphi(r)&
    =\lim_{s\to r} \frac{1}{\omega_{d-1}}\int_\sphere \frac{\varphi_\xi(s)-\varphi_\xi(r)}{r-s}\d\xi
    = \frac{1}{\omega_{d-1}}\int_\sphere \lim_{s\to r} \frac{\varphi_\xi(s)-\varphi_\xi(r)}{r-s}\,\d\xi\\&
    = \frac{1}{\omega_{d-1}}\int_\sphere \varphi_\xi'(r)\,\d\xi .
\end{align*}
Inductively, we obtain with Lemma~\ref{lem:phi_xi} for any $r\in \R$ and $n\leqslant m$, that
\begin{align*}
    \left|(1+|r|)^m \mathcal A_d\varphi^{(n)}(r)\right|&
    \leqslant \frac{(1+|r|)^m}{\omega_{d-1}}\int_\sphere \left| \varphi_\xi^{(n)}(r)\right|\,\d\xi 
    \\&
    \leqslant \frac{1}{\omega_{d-1}} \int _\sphere \|\varphi_\xi\|_m\,\d\xi
    \leqslant d^m \|\varphi\|_m.
\end{align*}
Clearly $\mathcal A_d\varphi$ is an even function and therefore in $\schdrad$. \hfill \qedsymbol{\parfillskip0pt\par}

\section{Proof of Theorem \ref{thm:dist slicing}} \label{app:proof-new}
Since $F$ is defined via \eqref{eq:basis_function}, we conclude that $F$ is continuous and slowly increasing.
The Fourier transform of $\varphi_{d,m,z}(x)= (m/\pi)^{d/2}\e^{-m\|x-z\|^2}$ defined in Lemma~\ref{lem: approx id} is given by $\hat \varphi_{d,m,z}(v) = \e^{-2\pi\i\langle z,v\rangle} \e^{-\pi^2\|v\|^2/m}$.
Now we obtain
\begin{align*}
    &\quad (\mathcal F_1^{-1}\circ\mathcal A_d\circ \mathcal F_d)[\varphi_{d,m,z}](s)
= \int_\R \e^{2\pi \i sr} \frac{1}{\omega_{d-1}} \int_\sphere \e^{-2\pi  \i \langle z,r\xi \rangle} \e^{-\pi^2\frac{r^2}{m}} \d \xi \d r\\&
= \frac{1}{\omega_{d-1}} \int_\sphere  \int_\R \e^{-2\pi  \i r( \langle z,\xi \rangle-s)} \e^{-\pi^2\frac{r^2}{m}} \d r \d \xi
=\frac{1}{\omega_{d-1}} \int_\sphere  \mathcal F_1\left[\e^{-\pi^2\frac{(\cdot)^2}{m}}\right](\langle z,\xi\rangle -s)\d\xi \\&
    =\E_{\xi\sim\cU_\sphere} [\sqrt{\nicefrac{m}{\pi}}\e^{-m|\langle z,\xi\rangle -s|^2}].
\end{align*}
Then we derive
\begin{equation*}
    \langle \Phi,\varphi_{d,m,z}\rangle =\E_{\xi\sim\cU_\sphere} [\langle f,\varphi_{1,m,\langle z,\xi\rangle}\rangle ].
\end{equation*}
Since $f$ is continuous, even and slowly increasing, we obtain from \eqref{eq:lebesgues_estimate} in the proof of Theorem~\ref{theorem distributional slicing} that there is a slowly increasing function $g\in\mathcal C(\Rd)$ that is independent of $m$ and $\xi$ such that 
\begin{equation}\label{eq:g_majorant}
|\langle f,\varphi_{1,m,\langle z,\xi\rangle}\rangle |\leqslant g(z)\text{ for all } z\in \Rd.
\end{equation}
Therefore, we can apply Lebesgue's dominated convergence theorem and Lemma~\ref{lem: approx id} to conclude
\begin{equation*}
    \lim_{m\to \infty} \langle \Phi,\varphi_{d,m,z}\rangle = \E_{\xi\sim\cU_\sphere} [f(|\langle \xi,z\rangle|)]=F(\|z\|).
\end{equation*}
Let $\varphi\in \mathcal S(\R^d)$ be arbitrary. By \cite[Thm.~4.45]{PlPoStTa23}, the convolution $\varphi_{d,m,0}*\Phi$ is a regular tempered distribution generated by the function $z\mapsto \langle \Phi,\varphi_{d,m,z}\rangle $. Since the convolution with the approximate identity $\varphi_{d,m,0}$ converges in $\mathcal{S}(\mathbb R^d)$ by \cite[Thm 2.3]{AlvAlvLle21}, we obtain
\begin{align*}
\langle \Phi,\varphi\rangle &
= \lim_{m\to \infty} \langle \varphi_{d,m,0} *\Phi,\varphi\rangle 
=\lim_{m\to \infty} \int_\Rd \langle \Phi,\varphi_{d,m,z}\rangle \varphi(z)\d z
= \int_\Rd F(\|z\|)\varphi(z)\d z.
\end{align*}
In the last step, we employed again Lebesgue's dominated convergence theorem with the majorant $\omega_{d-1} g(z)$.
Hence, $\Phi$ is a regular, continuous, even and slowly increasing distribution.
The validity of \eqref{eq:distributional_slicing} follows because $\mathcal R_d^\star$ and the Fourier transform are bijective on radial distributions by Lemma \ref{Lemma Rot inverse to avg}.  \hfill\qedsymbol{\parfillskip0pt\par}

\section{Riemann--Liouville Fractional Integrals and Derivatives} \label{app:sturm_liouv}

In this section, we briefly describe the relation between Abel-type integrals 
\eqref{eq:basis_function_intro} 
and Riemann--Liouville fractional integrals 
together with their inversion formula via fractional derivatives. For more information, we refer e.g. to \cite{Rub24,SKM1993}. 

Denote by $\mathcal{AC}_{\textup{loc}}([0,\infty))$ the space of functions that are absolutely continuous on any interval $[0,b]$ with $b>0$. In particular, $f\in\mathcal{AC}_{\textup{loc}}([0,\infty))$ if and only if there exists $g\in L^1_\mathrm{loc}([0,\infty))$ with $f(x)=\int_0^x g(t) \d t+f(0)$ for all $x\ge0$. Moreover, for $n\ge 1$, the space $\mathcal{AC}^n_\textup{loc}([0,\infty))$ consists of all functions $f\in \mathcal C^{(n-1)}([0,\infty))$, such that $f^{(n-1)}\in \mathcal{AC}_\textup{loc}([0,\infty))$.
Let $\alpha > 0$. Then, 
for $ g \in L^1_{{\textup{loc}}}([0,\infty))$,
the \emph{Riemann--Liouville fractional integral} is given by
  \begin{equation*}
    (I_{+}^\alpha g)(s)
    \coloneqq
    \frac{1}{\Gamma(\alpha)}
    \int_0^s g(t) \, (s-t)^{\alpha-1} \,\d t, \forrall s \in (0,\infty),
  \end{equation*}
  	and the
	\emph{fractional derivative}  of $g\in \mathcal{AC}^n_\textup{loc}([0,\infty))$ by
  \begin{equation*}
    (D_{+}^\alpha G)(s)
    \coloneqq
    \frac{1}{\Gamma(n - \alpha)}
    \, \Bigl( \frac{\diff}{\diff s} \Bigr)^n
    \int_0^s G(t) \, (s - t)^{n - \alpha - 1} \diff t,
  \end{equation*}
where $n = \lfloor \alpha \rfloor + 1$, see \cite[Thm.~2.4]{SKM1993}.
If $\alpha\in\N_0$, then $D^\alpha_+$ is the $\alpha$-th derivative.
For $g \in L^1_{\textup{loc}}([0,\infty))$
and $G \in I_{+}^\alpha(L^1_{\textup{loc}}([0,\infty)))$,
it holds
\begin{equation} \label{eq:Riemann-Liouville-inversion}
    D_{+}^\alpha I_{+}^\alpha g = g        \quad\text{and}\quad
    I_{+}^\alpha D_{+}^\alpha G = G.
\end{equation}

By \cite[Thm 2.3]{SKM1993}, we have $f\in I_+^\alpha(L^1_{\textup{loc}}[0,\infty))$ if and only if
\begin{equation}\label{eq:cond_for_IalphaL1}
    \tilde f \coloneqq
    I_+^{\lfloor\alpha\rfloor+1-\alpha} f\in \mathcal{AC}_{\textup{loc}}^{\lfloor\alpha\rfloor+1}([0,\infty))
    \quad\text{and}\quad
    \tilde f^{(k)}(0)=0 \ \forall k=0,\dots,{\lfloor\alpha\rfloor},
\end{equation}
where the first condition is fulfilled if $f\in\mathcal{AC}_\textup{loc}^{\lfloor\alpha\rfloor+1}([0,\infty))$, see \cite[p 37]{SKM1993}.

\begin{theorem}\label{thm:enusre_slicing}
For $d\ge 3$, let $F\in \mathcal C^{\lfloor \nicefrac{d}{2} \rfloor}([0,\infty))$ and $G(t)\coloneqq F(\sqrt{t})t^{\nicefrac{(d-2)}{2}}$, $t \ge0$.
Then the function
\begin{equation} 
    f(s) 
    \coloneqq
    \frac{2s}{c_d \Gamma(\nicefrac{(d-1)}{2})} \big(D_+^{\nicefrac{(d-1)}{2}} G \big)(s^2), \quad s\ge0,
\end{equation}
is in $C([0,\infty))$ and fulfills
    $F(\|z\|) = \mathbb E_{\xi \sim \mathcal U_\sphere} [f(|\langle \xi,z\rangle|)]$.
Moreover, if the $\lfloor \nicefrac{d}{2}\rfloor$-th derivative of $F$ is slowly increasing, then $f$ is slowly increasing.
\end{theorem}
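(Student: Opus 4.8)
The plan is to turn the defining Abel-type integral \eqref{eq:basis_function} into a Riemann--Liouville fractional integral by the substitution $u=t^2$, to invert it via \eqref{eq:Riemann-Liouville-inversion}, and finally to read off continuity and slow increase of $f$ from an explicit expression for the occurring fractional derivative.

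First I would perform the change of variables. Put $\sigma=s^2$, $h(u)\coloneqq f(\sqrt u)/(2\sqrt u)$ and $G(\sigma)\coloneqq\sigma^{(d-2)/2}F(\sqrt\sigma)$. Substituting $u=t^2$ in the second integral of \eqref{eq:basis_function} and using $(d-3)/2=(d-1)/2-1$ as well as $s\,(s^2)^{(d-3)/2}\cdot sF(s)/s=s^{d-2}F(s)=G(s^2)$, the relation between $F$ and $f$ becomes
\[
G \;=\; c_d\,\Gamma\!\left(\tfrac{d-1}{2}\right)\, I_+^{(d-1)/2}h .
\]
Since $\int_0^R|h(u)|\,\d u=\int_0^{\sqrt R}|f(v)|\,\d v$, we have $h\in L^1_{\loc}([0,\infty))$ if and only if $f\in L^1_{\loc}([0,\infty))$, so this is an honest reformulation of \eqref{eq:basis_function}.

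Second, given $F\in\mathcal C^{\lfloor d/2\rfloor}([0,\infty))$, I would verify $G\in I_+^{(d-1)/2}\big(L^1_{\loc}([0,\infty))\big)$ by means of the characterisation \eqref{eq:cond_for_IalphaL1}. Expanding $F$ at the origin in its Taylor polynomial of degree $\lfloor d/2\rfloor$ plus integral remainder and using $F\in\mathcal C^{\lfloor d/2\rfloor}$, one checks that $I_+^{\lfloor(d-1)/2\rfloor+1-(d-1)/2}G$ lies in the required class $\mathcal{AC}_{\loc}^{\lfloor(d-1)/2\rfloor+1}([0,\infty))$ and that all relevant derivatives vanish at $0$. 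For odd $d=2k+1$ this reduces to $G\in\mathcal{AC}_{\loc}^{k}([0,\infty))$ with $G^{(j)}(0)=0$ for $j<k$: by the Leibniz rule applied to $G(\sigma)=\sigma^{k-1/2}F(\sqrt\sigma)$, every derivative $G^{(m)}(\sigma)$ with $m\le k-1$ is a finite sum of terms carrying a strictly positive power of $\sigma$, while $G^{(k)}$ produces only the locally integrable singular term $\sigma^{-1/2}F(\sqrt\sigma)$ together with pieces continuous on $[0,\infty)$. Once $G\in I_+^{(d-1)/2}(L^1_{\loc})$, \eqref{eq:Riemann-Liouville-inversion} gives $h=\tfrac{1}{c_d\Gamma((d-1)/2)}D_+^{(d-1)/2}G\in L^1_{\loc}$, hence $f(s)=2sh(s^2)\in L^1_{\loc}([0,\infty))$ satisfies \eqref{eq:basis_function} upon undoing the substitution, and Theorem~\ref{thm:slicing_b} yields $F(\|z\|)=\E_{\xi\sim\cU_\sphere}[f(|\langle\xi,z\rangle|)]$.

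Third, continuity and slow increase of $f$ would follow from making $D_+^{(d-1)/2}G$ explicit. For odd $d=2k+1$ the fractional derivative is the ordinary $k$-th derivative, and collecting the Leibniz terms of $(\sigma^{k-1/2}F(\sqrt\sigma))^{(k)}$ gives $f(s)=a_0F(s)+\sum_{j=1}^{k}a_j\,s^{j}F^{(j)}(s)$ with explicit constants $a_j$; this is continuous since $F\in\mathcal C^{k}$, and if $F^{(k)}$ is slowly increasing then so are $F,F',\dots,F^{(k)}$ by successive integration, hence so is $f$. For even $d=2k$ one has $D_+^{k-1/2}G=\tfrac{\d^{k}}{\d\sigma^{k}}I_+^{1/2}G$; the substitution $t=\sigma v$ in $I_+^{1/2}G(\sigma)=\tfrac{1}{\sqrt\pi}\int_0^\sigma t^{k-1}F(\sqrt t)(\sigma-t)^{-1/2}\,\d t$ yields $I_+^{1/2}G(\sigma)=\tfrac{\sigma^{k-1/2}}{\sqrt\pi}\int_0^1 v^{k-1}(1-v)^{-1/2}F(\sqrt{\sigma v})\,\d v$, and differentiating $k$ times (Leibniz again, together with the elementary identity $I_+^{1/2}(\sigma^\beta)=\tfrac{\Gamma(\beta+1)}{\Gamma(\beta+3/2)}\sigma^{\beta+1/2}$) expresses $f(s)$ as a finite sum of Beta-type integrals $\int_0^1 v^{k-1+i/2}(1-v)^{-1/2}F^{(i)}(s\sqrt v)\,\d v$, $0\le i\le k$, with polynomial-in-$s$ coefficients; continuity of $f$ follows by dominated convergence, and the slow increase of $f$ from the bound $|F^{(i)}(s\sqrt v)|\le C(1+(s\sqrt v)^{M})$.

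I expect the main obstacle to be the bookkeeping in the even-dimensional case: one must control, near $\sigma=0$, the half-integer fractional integral $I_+^{1/2}G$ and its $k$ ordinary derivatives simultaneously, both to confirm membership of $G$ in $I_+^{(d-1)/2}(L^1_{\loc})$ and to rule out non-integrable singularities of $f$, whereas the odd case is comparatively transparent. The Taylor expansion of $F$ at the origin, combined with the explicit action of $I_+^{1/2}$ on power functions, is the device that makes this manageable.
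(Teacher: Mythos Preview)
Your proposal is essentially the same approach as the paper's: both reduce \eqref{eq:basis_function} to the Riemann--Liouville form via the quadratic substitution, verify $G\in I_+^{(d-1)/2}(L^1_{\loc})$ through the characterisation \eqref{eq:cond_for_IalphaL1} in an odd/even case split, and then read off the regularity and growth of $f$ from $D_+^{(d-1)/2}G$.

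The one notable operational difference is in the even case. You work with $D_+^{k-1/2}G=\tfrac{\d^k}{\d\sigma^k}I_+^{1/2}G$ and expand via Leibniz into Beta-type integrals, which is correct but forces you to track the interplay of the $\sigma^{-1/2}$ factors coming from each chain-rule derivative of $F(\sqrt{\sigma v})$ against the outer powers $\sigma^{k-1/2}$. The paper instead uses the commuted form $D_+^{k-1/2}G=I_+^{1/2}G^{(k)}$ (valid once $G\in\mathcal{AC}^k_{\loc}$ with the right vanishing at $0$); this makes the slow-increase conclusion immediate, since $G^{(k)}$ is manifestly slowly increasing from the recursion $G'(s)=\tfrac12 F'(\sqrt s)s^{(d-3)/2}+\tfrac{d-2}{2}F(\sqrt s)s^{(d-4)/2}$, and applying $I_+^{1/2}$ preserves slow increase by a one-line substitution. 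Your route gives the same answer with more bookkeeping; the paper's ordering buys a cleaner endgame. For the vanishing of $(I_+^{1/2}G)^{(j)}(0)$ in the even case, the paper avoids the Taylor expansion you propose and simply shows $I_+^{1/2}G(\sigma)=\mathcal O(\sigma^{(d-1)/2})$ directly via the substitution $t=\sigma x$, which is a shortcut worth noting.
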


\begin{proof}
Set $\alpha\coloneqq  \nicefrac{(d-1)}{2}$ 
and $n\coloneqq \lfloor\alpha\rfloor+1$. 

1. First, we show that $I_+^\alpha D_+^\alpha G=G$.
To this end, we have to ensure that \eqref{eq:cond_for_IalphaL1} holds true.
Since $ F\in \mathcal C^{\lfloor \nicefrac{d}{2}\rfloor}([0,\infty))$, we obtain
\begin{equation}\label{eq:recur_deriv}
    G'(s)
    =
    \tfrac{1}{2} F'(\sqrt{s}) s^{\nicefrac{(d-3)}{2}}
   + \tfrac{d-2}{2}  F(\sqrt{s}) s^{\nicefrac{(d-4)}{2}},
\end{equation}
which has an integrable singularity in $s=0$ if $d=3$.
We distinguish two cases.

Let $d$ be odd, so that $\alpha$ is an integer,
$$\lfloor \nicefrac{d}{2}\rfloor = \nicefrac{(d-1)}{2}=\alpha =n-1,$$ 
and $F\in \mathcal C^{n-1}([0,\infty))$. 
By recursively applying \eqref{eq:recur_deriv}, the $(n-1)$-th derivative of $G$ exists and is continuous on $(0,\infty)$ with an integrable singularity $s^{-\nicefrac{1}{2}}$ at $0$. Therefore, $G\in \mathcal{AC}^\alpha_\textup{loc}([0,\infty))$. 
The function $I^{n-\alpha}_+G=I_+^1G$ is the antiderivative of $G$ and therefore we have $I^{n-\alpha}_+G\in \mathcal{AC}^n([0,\infty)) $. 
Concerning the second condition in \eqref{eq:cond_for_IalphaL1}, we have for $k=0,\ldots, n-1$, that $(I^1_+G)^{(k)}(0)=G^{(k-1)}(0)=0$.
Hence, \eqref{eq:cond_for_IalphaL1} is fulfilled and $I_+^\alpha D_+^\alpha G=G$.

Let $d$ be even, so that $n = \nicefrac{d}{2}$ and $F\in \mathcal C^n([0,\infty))$. 
We can differentiate $G$ by recursively applying \eqref{eq:recur_deriv} at least $(n-1)$ times continuously, where the $(n-1)$-th derivative has the form $G^{(n-1)}(s)= \tilde G(\sqrt{s})$ with $\tilde G\in \mathcal C^1([0,\infty))$.
Hence $G^{(n)}(s)=\nicefrac{1}{(2\sqrt{s})}\tilde G'(\sqrt{s})\in L^1_\textup{loc}([0,\infty))$, which yields $G\in \mathcal{AC}^n_\textup{loc}([0,\infty))$. By \cite[p~37]{SKM1993}, the first condition in \eqref{eq:cond_for_IalphaL1}, namely $I_+^{n-\alpha}G=I_+^{\nicefrac{1}{2}}G\in \mathcal{AC}^n_\textup{loc}([0,\infty))$, is satisfied.
For the second condition in \eqref{eq:cond_for_IalphaL1}, we show that $I_+^{\nicefrac{1}{2}}G(s)\in\mathcal O(s^{\nicefrac{(d-1)}{2}})$ for $s\searrow0$, 
which implies that $(I_+^{\nicefrac{1}{2}}G)^{(k)}(0)=0$ for $k=0,\ldots, n-1$.
Using the substitution $t=sx$, we obtain
\begin{align*}
\frac{\sqrt{\pi}}{s^{\frac{d-1}{2}}}I_+^{\nicefrac{1}{2}}G(s)&
=s^{\frac{1-d}{2}} \int_0^s \frac{F(\sqrt{t}) {t}^{\frac{d-2}2}}{\sqrt{s-t}}\d t
\\&
=\int_0^1 F(\sqrt{xs} )\frac{{x}^{\frac{d-2}2}}{\sqrt{1-x}}\d x\xrightarrow{s\to 0} F(0)\int_0^1\frac{{x}^{\frac{d-2}2}}{\sqrt{1-x}}\d x<\infty .
\end{align*}
In summary, we have $I_+^\alpha D_+^\alpha G=G$.

2. Next, we  define 
$$g\coloneqq D_+^\alpha G \quad\text{and}\quad f(s)\coloneqq s \, g(s^2) =  s \, \big(D_+^\alpha G \big)(s^2),$$ 
which are both in $\mathcal C([0,\infty))$. By \eqref{eq:Riemann-Liouville-inversion}, it holds $I_+^\alpha g = G$ and we obtain with the substitution $t=sx^2$ that
\begin{align*}
F(\sqrt{s})&
=s^{-\frac{d-2}{2}} I^\alpha_+ g(s)
=\frac{1}{\Gamma(\alpha)} s^{\frac{2-d}{2}} \int_0^s g(t)(s-t)^\frac{d-3}{2}\d t\\&
=\frac{1}{\Gamma(\alpha)} s^{\frac{2-d}{2}} \int_0^s \frac{f(\sqrt{t})}{\sqrt{t}}(s-t)^\frac{d-3}{2}\d t
=\frac{2}{\Gamma(\alpha)}\int_0^1 f(x\sqrt{s})(1-x^2)^\frac{d-3}{2}\d x.
\end{align*}
By Theorem \ref{thm:slicing_b} this implies
$
    F(\|z\|)=\frac{2}{c_d\Gamma(\alpha)} \mathbb E_{\xi \sim\mathcal U_\sphere} [f(|\langle \xi,z\rangle|)]
$
and we have proved the first assertion.

3.
Finally, let $F^{(\lfloor d/2 \rfloor)}$ be slowly increasing.
It remains to show that $f$ is slowly increasing, too.
If $d$ is odd, then $F^{(k)}$ for $k\le \alpha$ is slowly increasing, and thus also $D_+^\alpha G=G^{(\alpha)}$ is slowly increasing.
If $d$ is even, then  $F^{(k)}$ for $k\le n$ is slowly increasing and thus also $G^{(n)}$.
From the equality
\begin{align*}
\sqrt{\pi}D_+^\alpha G(s)&
=\sqrt{\pi}I_+^{\nicefrac{1}{2}}D_+^nG(s)
=\int_0^s \frac{G^{(n)}(t)}{\sqrt{s-t}} \, \d t
=\sqrt{s}\int_0^1 \frac{G^{(n)}(sx)}{\sqrt{1-x}} \, \d x, 
\end{align*}
we see that also $D_+^\alpha G$ is slowly increasing.
In both cases, $g=D_+^\alpha G$ is slowly increasing and thus also $f(t)=g(t^2)t$ is.
\end{proof}

\end{document}